\documentclass[11pt,reqno]{amsart}
\usepackage{amsmath,amssymb,amsfonts}
\usepackage{amsthm,bm}
\usepackage{mathrsfs}
\usepackage{algorithm}
\usepackage{algpseudocode}
\usepackage{enumitem}
\usepackage{color}
\usepackage{comment}
\usepackage{graphicx}

\newtheorem{lemma}{Lemma}[section]
\newtheorem{remark}{Remark}[section]
\newtheorem{definition}{Definition}[section]

\newtheorem{proposition}{Proposition}[section]
\newtheorem{theorem}{Theorem}[section]

\newcommand{\ii}{\mathrm{i}}
\newcommand{\dd}{\,\mathrm{d}}
\newcommand{\Op}{\operatorname{Op}}
\newcommand{\supp}{\operatorname{supp}}
\newcommand{\diag}{\operatorname{diag}}

\numberwithin{equation}{section}

\begin{document}
\title[Time-modulated refocusing in inhomogeneous medium]{Time-modulated refocusing in inhomogeneous medium}

\author[Hongyu Liu]{Hongyu Liu$^\P$ }

\address{Department of Mathematics, City University of Hong Kong, Hong Kong SAR, China}

\email{hongyu.liuip@gmail.com}

\author[Wei Wu]{Wei Wu$^\dag$} \address{School of Mathematics, Jilin University, 2699 Qianjin Street, 130012 Changchun, China}

\email{wei\_wu@jlu.edu.cn}

\date{\today} \subjclass{} \keywords{}

\begin{abstract}

In an inhomogeneous dissipative medium, different rays experience different round-trip attenuation, so an instantaneous time mirror may retrace rays without producing a balanced point focus. We solve this problem for a microlocally selected back-propagating transverse-magnetic component generated by a short pulsating modulation of a thin shell surrounding an inhomogeneous Debye medium with Ohmic conductivity. An exact decomposition of the Maxwell--Debye field isolates the pulse-generated return, and a semiclassical mode reduction yields a scalar returned operator whose principal symbol separates the temporal pulse response, shell modulation, and accumulated material loss. Our analysis shows that geometric distortion and amplitude attenuation are both involved in wave propagation, but the geometric distortion cancels between the outgoing and returning branches, whereas physical attenuation remains. A normalized directional energy, computed from pre-modulation measurements using the known lossless backward propagator, contains the same leading attenuation factor as the returned symbol. Its reciprocal defines a smooth, nonnegative, physically admissible modulation that equalizes the selected source-side gain up to a relative $O(h)$ error without requiring a complete attenuation model. With such a modulation, the field generated by a point source has a focal-scale leading profile independent of directional attenuation, with a unique maximum at the source and coherent return time. In every fixed focal window, the maximum is displaced by at most $O(h^2)$ in space and time, and the spatial half-amplitude area is $O(h^2)$.

\end{abstract}

\subjclass[2020]{Primary 35Q61; Secondary 35A27, 78A40, 78M35.}
\keywords{time reversal, Maxwell--Debye system, semiclassical microlocal analysis, time-modulated refocusing}

\maketitle

\section{Introduction}

Time reversal is a wave-control principle in which a field emitted by a localized source is recorded, reversed in time, and re-emitted so that the resulting wave propagates back toward the source. The modern experimental framework was developed in ultrasonics and acoustics, where time-reversal mirrors were shown to produce robust spatiotemporal compression even in complicated propagation environments \cite{Fink1992,Fink1997,FinkEtAl2000,FinkPrada2001}. A particularly important observation is that heterogeneity and multiple scattering need not be detrimental, and the additional propagation paths can enlarge the effective aperture, improve robustness, and enhance the resolution of the refocused field \cite{DerodeRouxFink1995,BlomgrenPapanicolaouZhao2002,BalRyzhik2003}. Mathematical analyses have related the quality of the focus to propagation, observation, and decay properties of the underlying wave equation \cite{BardosFink2002}. These features make time reversal relevant whenever waves must be concentrated through an imperfectly known or strongly heterogeneous medium, including imaging, source localization, communications, nondestructive testing, and targeted energy delivery.

The same principle extends to electromagnetic waves. Electromagnetic time-reversal mirrors have been realized experimentally, and their propagation and focusing mechanisms have been developed theoretically \cite{LeroseyEtAl2004,deRosnyLeroseyFink2010}. A conventional time-reversal mirror, however, requires a recording stage and an array capable of retransmitting the time-reversed signals. Time-varying media provide a different mechanism. A rapid temporal change of constitutive parameters produces temporal reflection and frequency conversion, a phenomenon already present in early studies of velocity-modulated electromagnetic media and subsequently developed in the theory of temporal boundaries and spacetime metamaterials \cite{Morgenthaler1958,XiaoMaywarAgrawal2014, PlansinisDonaldsonAgrawal2015,CalozDeckLeger2020,GaliffiEtAl2022}. The instantaneous time mirror introduced experimentally in \cite{BacotLabousseEddiFinkFort2016} replaces a boundary array by a short, spatially distributed perturbation of the medium, and the perturbation generates a backward-propagating component throughout the modulated region. A mathematical framework for time reversal produced by time-dependent perturbations was established in \cite{BalFinkPinaud2019}.

Dissipation creates a basic obstruction to this mechanism. In a conservative reciprocal medium, reversal of the propagating phase can make the return paths coherent. In an attenuating medium, the returning wave is attenuated once more rather than amplified by the amount lost during the outgoing propagation. When the medium is inhomogeneous, different rays experience different travel histories and therefore acquire different amplitudes, hence geometric return of the rays alone does not imply a balanced point focus. Modified adjoint fields and preprocessing have been used to compensate attenuation in acoustic time reversal \cite{AmmariBretinGarnierWahab2011}, and spatially tailored
instantaneous time mirrors have been proposed for electromagnetic focusing in absorbing media \cite{WuNobreFortRileyCosten2022}. The issue addressed here is to derive, within a dispersive vector model, a modulation that is both physically admissible and computable from measurements available before the material pulse, and then to prove that the resulting returned field actually develops a point focus with quantified focal-scale behavior.

We consider a transverse-magnetic electromagnetic pulse generated by a point source at $\bm z_\star$ in a two-dimensional section of an inhomogeneous Debye medium with Ohmic conductivity. Throughout the semiclassical discussion, we consider the propagation of wave with high frequency $\omega\sim h^{-1}$ when $0<h\ll 1$. The medium is surrounded by a thin material shell, and at a mirror time $T_h$ the constitutive parameters in that shell are perturbed for a duration $\tau_h=O(h)$ by a spatial profile $m_h$. The short perturbation produces a back-propagating negative-TM component. The central question is the following: can $m_h$ be chosen causally and nonnegatively so that the selected returned directions have the same leading gain and consequently form a coherent point focus at the source? This question involves more than ray retracing. It requires separation of the positive and negative propagating modes, identification of the complete round-trip principal amplitude, recovery of its directional loss from accessible data, and control of the returned field on the wavelength scale.

By carefully choosing the modulation function $m_h$, we can guarantee a measurement-calibrated cancellation of directional attenuation at the level of the returned-wave symbol. For a selected source phase point $\bm\rho$, the semiclassical reduction gives a principal returned symbol of the form
\begin{equation*}
 j_m^+(\bm\rho) = \overline{p_\eta\bigl(\omega(\bm\rho)\bigr)}m\bigl(\bm X_T(\bm\rho)\bigr)
 \exp\left(-2\int_0^T\gamma\bigl(\bm\Phi^s(\bm\rho)\bigr)\dd s\right).
\end{equation*}
Here $p_\eta$ is the known temporal response of the material pulse, $m$ denotes the modulation function on the shell, $\bm X_T$ is the outgoing arrival map, and $\gamma$ is the physical attenuation along the ray. The geometric transport contributions on the positive and negative branches cancel, leaving the round-trip material loss displayed above. Rather than inserting this pathwise loss from a complete model of the attenuating medium, we construct a normalized directional energy $D_h(\theta)$ from the measured field by applying the known lossless backward propagator. Its leading term contains the same directional attenuation factor as the returned symbol. Defining the modulation on the calibrated curve by
\begin{equation*}
 m_h\bigl(\bm x_h^c(\theta)\bigr)=\frac{\alpha_h}{D_h(\theta)}
\end{equation*}
therefore yields, uniformly on the retained chart,
\begin{equation*}
 m_h\bigl(\bm x_h^c(\theta)\bigr)\exp\left[-2\int_0^{T_h}\gamma\bigl(\bm\Phi^s(\bm\Psi_{T_h}(\varpi,\theta))\bigr)\dd s\right]=\alpha_h c_h^{\rm av}+O(h\alpha_h).
\end{equation*}
Thus, after accounting for the known pulse response, the selected source-side directions have a common returned gain up to relative order $h$. This converts a direction-dependent lossy return into a balanced microlocal aperture by a causal, nonnegative spatial modulation.

The main contributions are as follows.
\begin{enumerate}[label=(\roman*),leftmargin=2.5em]
 \item
 We derive an exact decomposition of the Maxwell--Debye field into the unmodulated field, the component generated by the short material pulse, and a nonlinear material remainder. This identifies the returned operator that must be analyzed and separates the time-reversal mechanism from the remaining response of the modulated medium.

 \item
 We construct semiclassical injection and extraction operators for the positive and negative propagating TM branches and compress the vector returned operator to a scalar microlocal operator. Its principal symbol separates the temporal pulse response, the spatial modulation evaluated at the outgoing arrival point, and the accumulated round-trip attenuation. The opposite geometric transport terms cancel between the two branches, whereas the physical loss remains.

 \item
 At a specific time, we carry out observation, which is represented as $\mathcal O_{\mathcal A}^+$, on a given domain (which is assumed to be an annulus surrounding the source in the paper). Based on the observed data, we introduce the normalized directional measurement $D_h$, prove that it contains the same leading directional loss as the returned symbol, and use its reciprocal to define $m_h$. The resulting modulation is causal, smooth on the selected calibration curve, extendible as a compactly supported nonnegative function on the shell, and compatible with the positivity of the modulated material coefficients. The corresponding selected returned symbol is direction-independent up to a relative $O(h)$ error.

 \item
 For the point source, we obtain a focal-scale expansion of the selected negative-TM electric component in the variables
 $$
  \varsigma=\frac{t-t_{{\rm focus},h}}{h},
  \qquad
  \bm y=\frac{\bm x-\bm z_\star}{h}.
 $$
 The leading profile is independent of directional attenuation and has a unique global maximum at $(\varsigma,\bm y)=(0,0)$. In every fixed focal window, any maximizer of the selected field is displaced from the coherent return point by at most $O(h^2)$ in physical time and space, and the spatial half-amplitude area is of order $h^2$.
\end{enumerate}

The paper is organized as follows. Section~\ref{sec:prb-formulation} formulates the time-dependent Maxwell--Debye system, establishes the background evolution, and derives the returned component generated by the short material pulse. Section~\ref{sec:semiclassical} constructs the positive and negative TM scalar dynamics and computes the principal symbol of the returned operator. Section~\ref{sec:necessity-tailored-annulus} analyzes the point-source spectrum and the visible source-side chart, develops the causal directional measurement, and constructs the tailored modulation. Section~\ref{sec:causal-refocusing} transfers the cancellation back to the source and proves the focal-scale expansion, uniqueness of the leading focus, localization of the perturbed maximum, and the half-amplitude area estimates. Appendix~\ref{sec:semi-convention} records the semiclassical notation and operator conventions used in this manuscript.

\section{Problem Formulation}\label{sec:prb-formulation}

\subsection{The Maxwell system}

In this paper, we consider the propagation of electromagnetic wave in inhomogeneous medium. The wave is excited by an external source positioned at $\bm{z}_\star$ inside inhomogeneous medium. A thin shell surrounds the source from a distance. Let $\bm{E}=\bm{E}(t, \bm{x}), \bm{H}=\bm{H}(t, \bm{x})$ denote the electric and magnetic field, correspondingly. Due to the inhomogeneity, eddy current and time-dependent polarization is excited in the system, so the propagation of electromagnetic wave in the system is described by Maxwell equations
\begin{equation}\label{eq:maxwellmod}
\left\{
\begin{aligned}
\nabla\times\bm E &= -\mu\partial_t\bm H, \\
\nabla\times\bm H &= \partial_t\bm D+\bm J + \bm{J}_h^{\rm ext},
\end{aligned}
\right.
\end{equation}
together with time-domain constitutive equations
\begin{equation}\label{eq:modconstitutive}
\left\{
\begin{aligned}
&\bm D=\epsilon_0\epsilon_\infty(1+V_m)\bm E+\bm P, \\
&\tau_D\partial_t\bm P+\bm P = \epsilon_0\Delta\epsilon(1+V_m)\bm E, \\
&\bm J=\sigma(1+V_m)\bm E.
\end{aligned}
\right.
\end{equation}

The permeability is a fixed constant $\mu>0$. The material coefficients $\epsilon_\infty, \Delta\epsilon, \tau_D$ and $\sigma$ are real-valued smooth functions of $\bm{x}$ and satisfy
\begin{equation}\label{eq:modH}
\epsilon_\infty\geq c_->0,
\qquad
\Delta\epsilon\geq\delta_->0,
\qquad
0<\tau_-\le\tau_D\le\tau_+,
\qquad
0\le\sigma\le\sigma_+.
\end{equation}
Here $c_-, \delta_-, \tau_-, \tau_+$ and $\sigma_+$ are constants. 

The modulation function $V_m(t,\bm{x})$ is defined as $V_m(t,\bm{x}):=\eta_{\tau_h}(t-T)m(\bm{x})$. The modulation is supported only on the thin shell and is pulsating with very short time duration. After introducing semiclassical parameter $0<h\ll 1$ in section \ref{sec:semiclassical}, we set pulse duration $\tau_h:=\nu h$, where $\nu>0$ fixed. 

Let $\eta_{\tau_h}(t):=\tau_h^{-1}\eta(t/\tau_h)$, $\eta\in C_c^\infty((-1/2,1/2))$ depicts the pulsating modulation in time dimension with the pulse width $\tau_h$. We call $T$ the \emph{mirror time}. $T$ is the time of pulsating modulation center, and $m$ denotes the spatial modulation profile. $m$ and $T$ will be determined in Section \ref{sec:necessity-tailored-annulus}. 

To ensure the well-posedness of the system, we further require that for all $t$ and $\bm{x}$, 
\begin{equation}\label{eq:material-positivity}
1+V_m(t,\bm{x})\geq\kappa_0>0.
\end{equation}  

\begin{remark}
When $V_m$ is time-independent, system \eqref{eq:maxwellmod}--\eqref{eq:modconstitutive} is equivalent to a Debye dissipative medium with Ohmic conductivity. 
\end{remark}

Throughout the whole paper, we focus on transverse magnetic(TM) fields
$$
\bm E=(0,0,E_z)^\top,
\qquad
\bm H=(H_x,H_y,0)^\top,
\qquad
\bm P=(0,0,P)^\top.
$$
Set
\begin{align*}
&\bm{W}:=(H_x,H_y,E_z,P)^\top, &&
\epsilon_\Delta(t,\bm{x}):=\epsilon_0\Delta\epsilon(1+V_m(t,\bm{x})), \\
&\bm{e}_3:=(0,0,1,0)^\top, &&
\bm{e}_4:=(0,0,0,1)^\top.
\end{align*}
When the source term $\bm{J}_h^{\rm ext}$ is absent, \eqref{eq:maxwellmod}--\eqref{eq:modconstitutive} are reformulated as
\begin{equation}\label{eq:system}
\left\{
\begin{aligned}
&\mu\partial_tH_x+\partial_{x_2}E_z=0,\\
&\mu\partial_tH_y-\partial_{x_1}E_z=0,\\
&\epsilon_0\epsilon_\infty(1+V_m)\partial_tE_z
 +\partial_{x_2}H_x-\partial_{x_1}H_y+\left[\sigma(1+V_m)+\epsilon_0\epsilon_\infty\partial_tV_m
 +\frac{\epsilon_\Delta}{\tau_D}\right]E_z-\frac1{\tau_D}P=0,\\
&\partial_tP=\frac1{\tau_D}(\epsilon_\Delta E_z-P),
\end{aligned}
\right.
\end{equation}
and equivalently,
\begin{equation}\label{eq:modmatrix}
A_V(t)\partial_t\bm{W}+\Lambda\bm{W}+B_V(t)\bm{W}=0,
\end{equation}
where
\begin{equation}\label{eq:modAB}
\begin{aligned}
\Lambda&=
\begin{pmatrix}
0 & 0 & \partial_{x_2} & 0\\
0 & 0 & -\partial_{x_1} & 0\\
\partial_{x_2} & -\partial_{x_1} & 0 & 0\\
0 & 0 & 0 & 0
\end{pmatrix}, \qquad
A_V(t)=\diag\left(
\mu,\mu,\epsilon_0\epsilon_\infty(1+V_m),\frac{1}{\epsilon_\Delta}
\right),\\
B_V(t) & =
\begin{pmatrix}
0 & 0 & 0 & 0\\
0 & 0 & 0 & 0\\
0 & 0 & \sigma(1+V_m)+\epsilon_0\epsilon_\infty\partial_tV_m+
\dfrac{\epsilon_\Delta}{\tau_D} & -\dfrac1{\tau_D}\\
0 & 0 & -\dfrac1{\tau_D} & \dfrac1{\epsilon_\Delta\tau_D}
\end{pmatrix}.
\end{aligned}
\end{equation}
The matrices $A_V$ and $B_V$ are real symmetric, and $A_V$ is uniformly positive definite by \eqref{eq:modH} and \eqref{eq:material-positivity}. \eqref{eq:modAB} is the homogeneous Maxwell-Debye equation.

Thanks to the transverse polarization, \eqref{eq:modAB} is independent of $x_3$, hence we can consider the $(x_1,x_2)$-section where $\bm{z}_\star$ lives. We call $\mathcal A$ the annular section of the thin shell surrounding inhomogeneous medium. Figure \ref{fig:inhomogeneous-medium} briefly illustrate the system.

\begin{figure}[htbp]
    \centering
    \includegraphics[width=0.5\textwidth]{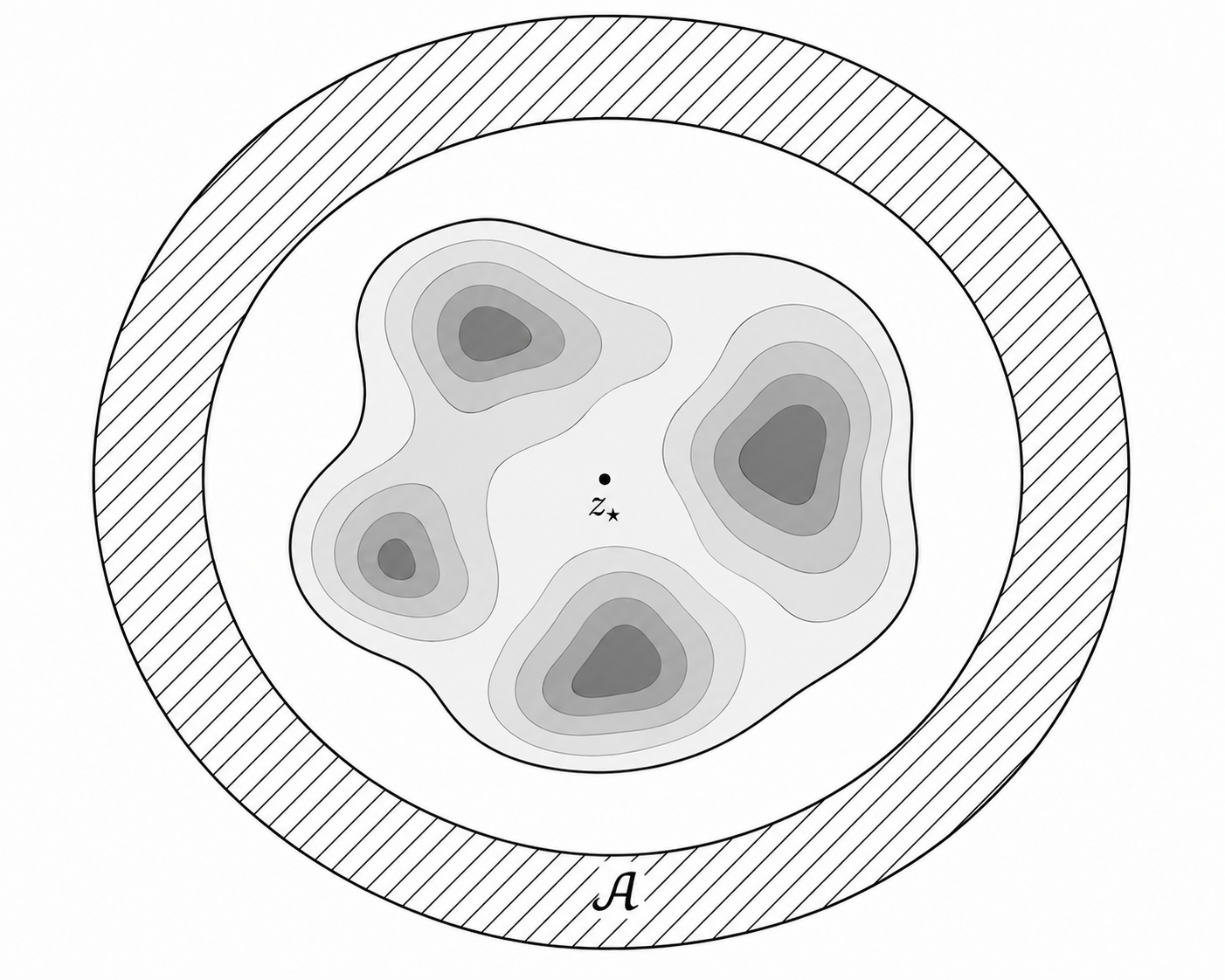}
    \caption{Schematic illustration of the inhomogeneous medium and the annulus $\mathcal{A}$.}
    \label{fig:inhomogeneous-medium}
\end{figure}

In this paper, we consider the following source term:
\begin{equation*}
    \bm J_h^{\rm ext}(t,\bm{x}) = g_h(t)\delta(\bm{x}-\bm{z}_\star)\bm{e}_z, \qquad \bm{e}_z=(0,0,1)^\top.
\end{equation*}
Such choice meets the requirement of TM fields. We choose the time origin at the end of the source pulse. Fix $\zeta_{\rm src}>0$ and let $t_{{\rm src},h}:=-h\zeta_{\rm src}$ be the center of pulse duration. Let $f\in C_c^\infty(\mathbb R;\mathbb R)$ be a nonzero and odd function, with support $\supp f\subset (-\zeta_{\rm src}, \zeta_{\rm src})$.
Then
\begin{equation*}
    g_h(t):=\frac{p}{h}f\left(\frac{t}{h}-\zeta_{\rm src}\right), \qquad p\in\mathbb C\setminus\{0\}
\end{equation*}
is an odd (generally complex-valued) pulse centered at $t_{{\rm src},h}$, of duration $O(h)$, and supported in $t<0$. As $h\to 0$, $g_h(t)$ tends to $p\delta(t-t_{{\rm src},h})$. The Maxwell-Debye equation with source $\bm J_h^{\rm ext}$ is
\begin{equation*}
    A_V\partial_t\bm{W}_h+\Lambda\bm{W}_h+B_V\bm{W}_h = -g_h(t)\delta(\bm{x}-\bm{z}_\star)\bm{e}_3, \qquad \bm{e}_3=(0,0,1,0)^\top.
\end{equation*} 

\subsection{Duhamel formula and the background propagator}

In this subsection, we demonstrate the evolution group and mild solution of \eqref{eq:modmatrix} with initial data $\bm{W}(s)=\bm{w}_s$. The results are not the crucial part of this paper, and can be achieved with standard knowledge of evolution equations following similar methods as \cite{BalFinkPinaud2019}, so we will leave out the detailed explanations and proofs. 

Let $\mathcal H_{\rm s}:=L^2(\mathbb R^2;\mathbb C)$ and $\mathcal H_{\rm v}:=L^2(\mathbb R^2;\mathbb C^4)$, and set $Y:=\{\bm{u}\in\mathcal H_{\rm v}:\Lambda \bm{u}\in\mathcal H_{\rm v}\}$. For each fixed $t$, define
\begin{equation}\label{eq:movingnorm}
(\bm{u},\bm{v})_t:=\int_{\mathbb R^2}\bm{v}(\bm{x})^*A_V(t,\bm{x})\bm{u}(\bm{x})\dd \bm{x},
\qquad
\|\bm{u}\|_t^2:=(\bm{u},\bm{u})_t.
\end{equation}
Here $\bm{v}^*$ means the conjugate transpose of $\bm{v}$. $(\cdot,\cdot)_t$ is an inner product in $\mathcal H_{\rm v}$. Write $\mathcal S(t):=-A_V(t)^{-1}\Lambda, \mathcal R(t):=-A_V(t)^{-1}B_V(t)$.

\begin{lemma}\label{lem:skew} 
For each fixed $t$, $\mathcal S(t)$ is skew-adjoint on $(\mathcal H_{\rm v},(\cdot,\cdot)_t)$ with domain $Y$. Moreover, $C_c^\infty(\mathbb R^2;\mathbb C^4)$ is a core for $\mathcal S(t)$. 
\end{lemma}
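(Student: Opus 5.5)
The plan is to reduce the statement to the self-adjointness of the constant-coefficient operator $\ii\Lambda$ on the unweighted space $\mathcal H_{\rm v}$, and then transfer it to the weighted inner product $(\cdot,\cdot)_t$. Fix $t$ and write $A:=A_V(t)$.

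\emph{Properties of the weight.} By \eqref{eq:modH} and \eqref{eq:material-positivity}, $A$ is a real diagonal matrix function. Its entries are bounded above and below by positive constants: $\mu$, $\epsilon_0\epsilon_\infty(1+V_m)$, and $1/\epsilon_\Delta$. Here one uses that $m$ is bounded and compactly supported, and that $\epsilon_\infty,\Delta\epsilon$ are bounded above on the relevant region (or one assumes bounded coefficients). Hence multiplication by $A$ and by $A^{-1}$ are bounded on $\mathcal H_{\rm v}$. In particular $\|\cdot\|_t$ is equivalent to the $L^2$ norm, so $(\mathcal H_{\rm v},(\cdot,\cdot)_t)$ is a Hilbert space with the same topology and the same dense sets.

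\emph{Step 1: the unweighted operator.} The operator $\Lambda$ has constant coefficients. Under the Fourier transform it becomes multiplication by $\ii M(\bm\xi)$, where $M(\bm\xi)$ is the real symmetric matrix with entries $M_{13}=M_{31}=\xi_2$ and $M_{23}=M_{32}=-\xi_1$, and all other entries zero. Hence $\Lambda$ with domain $Y=\{\bm u:M(\bm\xi)\hat{\bm u}\in L^2\}$ is unitarily equivalent to multiplication by the purely imaginary Hermitian-valued symbol $\ii M$. So $\Lambda$ is skew-adjoint on $\mathcal H_{\rm v}$ with maximal domain $Y$. The distributional definition of $Y$ coincides with the Fourier one, since $\Lambda\bm u$ is defined distributionally. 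For the core property, take $\bm u\in Y$ and first truncate, $\hat{\bm u}\chi(\bm\xi/R)$. This converges to $\bm u$ in the graph norm by dominated convergence, because $|M\hat{\bm u}|\in L^2$. Then mollify and cut off in $\bm x$. The commutator $[\Lambda,\psi(\cdot/R)]$ is bounded by $C/R$, so the cutoff error vanishes in graph norm. This shows that $C_c^\infty$ is a core for $\Lambda$.

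\emph{Step 2: transfer to the weight.} Take $\bm u,\bm v\in Y$. Since $A$ is Hermitian, $(\mathcal S\bm u,\bm v)_t=-\int \bm v^*\Lambda\bm u=\int(\Lambda\bm v)^*\bm u=-(\bm u,\mathcal S\bm v)_t$. So $\mathcal S$ is skew-symmetric on $Y$. For the adjoint, suppose $\bm v\in\mathcal H_{\rm v}$ and $\bm f$ satisfy $(\mathcal S\bm u,\bm v)_t=(\bm u,\bm f)_t$ for all $\bm u\in Y$. This reads $-\langle\Lambda\bm u,\bm v\rangle=\langle\bm u,A\bm f\rangle$. By Step 1, $\bm v\in\operatorname{dom}(\Lambda^*)=Y$ and $\Lambda\bm v=A\bm f$, i.e.\ $\bm f=-\mathcal S\bm v$. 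Hence $\mathcal S^*=-\mathcal S$ with domain $Y$. The domain is unchanged by the weight precisely because $A^{-1}$ is bounded. The graph norms of $\mathcal S$ and $\Lambda$ are equivalent, again by boundedness of $A^{\pm1}$. Therefore the core property of $C_c^\infty$ transfers directly from Step 1.

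\emph{Main obstacle.} The only delicate point is uniform two-sided bounds for $A$. This needs upper bounds on $\epsilon_\infty$ and $\Delta\epsilon$, which \eqref{eq:modH} does not state explicitly. I would either assume bounded smooth coefficients (for instance, constant outside a compact set) or note that only lower bounds are needed for the domain identification. In the latter case one argues with the factorization $\mathcal S=-A^{-1}\Lambda$ on a suitable weighted space. I expect the bounded-coefficient reading to be the intended one.
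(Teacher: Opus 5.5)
Your proof is correct, but it is organized differently from the paper's. The paper works directly in the weighted inner product. For $\bm u,\bm v\in C_c^\infty$ the weight cancels ($A_VA_V^{-1}=I$), and integration by parts with the constant real symmetric $D_1,D_2$ gives skew-symmetry. Cutoff and mollification then give the core property, and a second (distributional) integration by parts identifies $D(\mathcal S(t)^*)=Y$.

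You instead first diagonalize $\Lambda$ by the Fourier transform. Skew-adjointness of $\Lambda$ on its maximal domain $Y$ in unweighted $L^2$ then follows from the multiplication-operator picture. You transfer everything to $(\cdot,\cdot)_t$ using only that $A_V(t)$ is Hermitian with $A_V(t)^{\pm1}$ bounded.

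The two routes buy different things:
\begin{itemize}
\item Yours makes $D(\mathcal S(t)^*)=Y$ immediate, rather than a consequence of the core property plus a distributional argument. It also isolates exactly where the bounds on the weight enter.
\item The paper's is local and does not really use the constant-coefficient structure. It would survive variable-coefficient principal parts via Friedrichs' lemma, where Fourier diagonalization is unavailable.
\end{itemize}

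Your caveat about the coefficients is well taken. The paper's claim that $A_V$ is uniformly positive definite, and the meaning of $(\mathcal H_{\rm v},(\cdot,\cdot)_t)$ as a Hilbert space with the $L^2$ topology, tacitly require $\epsilon_\infty$ and $\Delta\epsilon$ to be bounded above. For instance, the entry $1/\epsilon_\Delta$ is bounded below only if $\Delta\epsilon$ is bounded above. So the bounded-coefficient reading you adopt is the one the statement presupposes, and the ``lower bounds only'' alternative you mention is not needed.

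Two cosmetic points:
\begin{itemize}
\item $\ii M(\bm\xi)$ is skew-Hermitian, not Hermitian-valued; your conclusion of skew-adjointness is nonetheless correct.
\item The mollification after frequency truncation is redundant, since the frequency-truncated function is already smooth with all derivatives in $L^2$.
\end{itemize}
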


\begin{proof} 
Write $\Lambda=D_1\partial_{x_1}+D_2\partial_{x_2}$ with the constant real symmetric matrices 
\begin{equation*}
D_1=
\begin{pmatrix}
0 & 0 & 0 & 0\\
0 & 0 & -1 & 0\\
0 & -1 & 0 & 0\\
0 & 0 & 0 & 0
\end{pmatrix},
\qquad
D_2=
\begin{pmatrix}
0 & 0 & 1 & 0\\
0 & 0 & 0 & 0\\
1 & 0 & 0 & 0\\
0 & 0 & 0 & 0
\end{pmatrix}.
\end{equation*}
Integration by parts gives
$$
(\mathcal S(t)\bm{u},\bm{v})_t+(\bm{u},\mathcal S(t)\bm{v})_t = -\int_{\mathbb R^2}\sum_{j=1}^2 \partial_{x_j}(\bm{v}^*D_j\bm{u})\dd \bm{x} = 0
$$
for compactly supported smooth $\bm{u},\bm{v}$. Cutoff and mollification show that this space is a core in the graph norm of $\Lambda$. Another integration by parts gives $D(\mathcal S(t)^*)=Y$ and $\mathcal S(t)^*=-\mathcal S(t)$. 
\end{proof}

\begin{theorem} 
Under \eqref{eq:modH} and \eqref{eq:material-positivity}, the equation \eqref{eq:modmatrix} has a unique strongly continuous evolution family $U(t,s)$ on $\mathcal H_{\rm v}$. It leaves $Y$ invariant and satisfies
\begin{equation}\label{eq:modUbound}
\|U(t,s)\|_{\mathcal B(\mathcal H_{\rm v})}\le C_1\exp(C_2|t-s|)
\end{equation}
with some constants $C_1, C_2$.
\end{theorem}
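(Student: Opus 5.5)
We treat \eqref{eq:modmatrix} as the non-autonomous Cauchy problem $\partial_t\bm W=\mathcal S(t)\bm W+\mathcal R(t)\bm W$ on $\mathcal H_{\rm v}$ and apply Kato's theory of hyperbolic evolution equations, regarding $\mathcal R(t)$ as a bounded perturbation. By Lemma~\ref{lem:skew}, each $\mathcal S(t)$ is skew-adjoint on $(\mathcal H_{\rm v},(\cdot,\cdot)_t)$ with the fixed domain $Y$. Stone's theorem then makes $\mathcal S(t)$ the generator of a unitary group in the $t$-norm. The moving norms are uniformly equivalent to the flat $L^2$ norm, since by \eqref{eq:modH} and \eqref{eq:material-positivity} we have $c\,|\bm u|^2\le \bm u^*A_V\bm u\le C|\bm u|^2$ uniformly in $(t,\bm x)$; this uses that $V_m$ is bounded, as $\eta,m$ are smooth and compactly supported. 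We also note that $t\mapsto A_V(t)$ is $C^1$ in $L^\infty$, with $\|\partial_tA_V\|_\infty\le C$ for fixed $h$. Hence $\|\bm u\|_t\le \exp\bigl(\tfrac{C}{2}|t-s|\bigr)\|\bm u\|_s$. This gives Kato stability of $\{\mathcal S(t)\}$ with stability constants $M=1$ and $\omega=C/2$.

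We then verify the remaining Kato hypotheses with $Y$ as the admissible space. The domain $Y$ is independent of $t$ because $\Lambda$ does not depend on $t$. The map $t\mapsto \mathcal S(t)=-A_V(t)^{-1}\Lambda$ lies in $C^1(\mathbb R;\mathcal B(Y,\mathcal H_{\rm v}))$, since $A_V(t)^{-1}$ is a $C^1$ multiplication operator. For this we use that the entry $1/\epsilon_\Delta$ is bounded above and below and that $V_m$ is smooth in $t$. Kato's theorem for the case of constant domain then yields a unique evolution family $U_0(t,s)$ for $\mathcal S$. This family is strongly continuous, leaves $Y$ invariant, is strongly $C^1$ on $Y$, and satisfies $\|U_0(t,s)\|\le C_1e^{C_2|t-s|}$.

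Next we add $\mathcal R(t)=-A_V^{-1}B_V$. Its entries involve $\sigma(1+V_m)$, $\partial_tV_m$ and $1/\tau_D$, so $\mathcal R(t)$ is a bounded multiplication operator with $\sup_t\|\mathcal R(t)\|<\infty$. Moreover, $\mathcal R$ is strongly continuous in $t$, and it is $C^1$ because $\eta$ is smooth. We define $U$ by the Dyson--Phillips series
\[
U(t,s)=U_0(t,s)+\int_s^tU_0(t,r)\mathcal R(r)U(r,s)\dd r .
\]
A Picard iteration converges and gives the bound $\|U(t,s)\|\le C_1\exp\bigl((C_2+C_1\sup\|\mathcal R\|)|t-s|\bigr)$ by Gronwall, which is \eqref{eq:modUbound}.

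The invariance of $Y$ under $U$ is the main technical point. Here we use that $\mathcal R(t)$ preserves $Y$, i.e.\ that $\Lambda(\mathcal R\bm u)\in\mathcal H_{\rm v}$. This requires $\Lambda$ to commute with $\mathcal R$ up to bounded terms. Such a commutator involves only derivatives of the smooth coefficients $\epsilon_\infty,\Delta\epsilon,\tau_D,\sigma,m$, and it is bounded provided these coefficients have bounded first derivatives, which we assume. With this, we rerun the iteration in the graph norm of $Y$.

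Uniqueness follows from the energy identity $\frac{\dd}{\dd t}\|\bm W\|_t^2=(\partial_tA_V\bm W,\bm W)-2\operatorname{Re}(B_V\bm W,\bm W)$, valid for $Y$-valued solutions. In this identity the skew term drops out by Lemma~\ref{lem:skew}, and Gronwall's inequality then forces any solution with zero data to vanish.

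The step we expect to be delicate is the $C^1$-regularity and stability in $t$. For fixed $h$ these are harmless, but the constants depend on $\|\partial_tV_m\|_\infty\sim\tau_h^{-2}$. The statement as given allows $C_1,C_2$ to depend on $h$, so we would simply track this dependence and not attempt uniformity.
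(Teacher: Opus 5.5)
Your construction of $U$ on $\mathcal H_{\rm v}$ through the integral equation $U=U_0+\int U_0\mathcal R U$ is fine, and so is the Gronwall bound. The step meant to give $U(t,s)Y\subset Y$ fails, however. $\mathcal R(t)$ does not map $Y$ into $Y$, and $[\Lambda,\mathcal R(t)]$ is not bounded, however smooth the coefficients are. The obstruction is structural.

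\begin{itemize}
\item The fourth row and column of $\Lambda$ vanish. Membership in $Y$ therefore imposes no regularity on the polarization component $u_4$, although it does force $u_3\in H^1$.
\item The operator $\mathcal R=-A_V^{-1}B_V$ has the entry $\mathcal R_{34}=\bigl(\epsilon_0\epsilon_\infty(1+V_m)\tau_D\bigr)^{-1}\neq0$, which feeds $u_4$ into the electric slot.
\item For example, take $\bm u=(0,0,0,u_4)^\top$ with $u_4\in L^2\setminus H^1$. It lies in $Y$ because $\Lambda\bm u=0$. Yet $(\mathcal R\bm u)_3=\mathcal R_{34}u_4\notin H^1$, so $\Lambda\mathcal R\bm u\notin\mathcal H_{\rm v}$.
\item Equivalently, the $(1,4)$ entry of $[\Lambda,\mathcal R]$ is the first-order operator $\partial_{x_2}\circ\mathcal R_{34}$.
\end{itemize}

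So rerunning the Picard iteration in the graph norm of $Y$ is not available, and your argument leaves $Y$-invariance unproved. Your uniqueness argument also needs it: it is formulated for $Y$-valued solutions, and without invariance your $U$ is only known to produce mild solutions.

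The fix is to keep $\mathcal R$ inside the generator, which is what the paper does. Apply Kato's constant-domain theorem directly to $\mathcal A(t):=\mathcal S(t)+\mathcal R(t)$:
\begin{itemize}
\item Its domain is still $Y$.
\item Stability of $\{\mathcal S(t)\}$ survives the uniformly bounded perturbation $\mathcal R(t)$, with $\omega$ increased by $M\sup_t\|\mathcal R(t)\|$.
\item For $\bm v\in Y$, the map $t\mapsto\mathcal A(t)\bm v$ is $C^1$ into $\mathcal H_{\rm v}$, because for fixed $h$ both $A_V^{-1}$ and $B_V$ are $C^1$ in $t$ with values in $L^\infty$.
\end{itemize}
Kato's theorem then gives existence, uniqueness, $U(t,s)Y\subset Y$ and the exponential bound together. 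Your extra hypothesis on derivatives of the coefficients also becomes unnecessary.

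If you want to keep the splitting, the $Y$-regularity of the Duhamel term must come from time regularity of the forcing $r\mapsto\mathcal R(r)U(r,s)\bm w$, not from $\mathcal R$ preserving $Y$. That needs a separate fixed-point argument in $C^1$ in time.

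A minor point: with a moving norm satisfying $\|\bm u\|_t\le e^{c|t-s|}\|\bm u\|_s$, Kato stability holds with $M$ of the form $e^{2c|I|}$ on a bounded interval $I$, not with $M=1$. This does not affect the conclusion.
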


\begin{proof} 
Lemma~\ref{lem:skew} gives the skew-adjoint principal part in the inner product. The operator $\mathcal R(t)$ is bounded on $\mathcal H_{\rm v}$, the domain $Y$ is independent of $t$, and $t\mapsto \mathcal{S}(t)+\mathcal{R}(t)$ is continuous from $Y$ to $\mathcal H_{\rm v}$. The derivative of the norm in \eqref{eq:movingnorm} is bounded by $C\|\bm{u}\|_t^2$. Kato's theorem for hyperbolic evolution equations with a time-dependent equivalent norm \cite{Kato1970,Kato1973} gives the evolution family and the bound \eqref{eq:modUbound}. 
\end{proof}

\begin{proposition}
Let $\bm{w}_s\in\mathcal H_{\rm v}$ and $\bm{f}\in L^1_{\rm loc}([s,\infty);\mathcal H_{\rm v})$. The unique mild solution of
$$
A_V\partial_t\bm{W}+\Lambda \bm{W}+B_V\bm{W}=\bm{f}, \qquad \bm{W}(s)=\bm{w}_s,
$$
is
\begin{equation}\label{eq:duhamel}
\bm W(t)=U(t,s)\bm{w}_s + \int_s^t U(t,r)A_V(r)^{-1}\bm{f}(r)\dd r.
\end{equation}
We call \eqref{eq:duhamel} Duhamel's formula.
\end{proposition}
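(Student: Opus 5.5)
The plan is the standard one for evolution families: first verify that \eqref{eq:duhamel} defines a mild solution, then prove uniqueness through the homogeneous problem. Rewrite the equation in the normalized form $\partial_t\bm W=(\mathcal S(t)+\mathcal R(t))\bm W+A_V(t)^{-1}\bm f$. Here $\mathcal S(t)+\mathcal R(t)$ is the generator family whose evolution family $U(t,s)$ is supplied by the preceding theorem. Since $A_V(t)^{-1}$ is bounded uniformly in $(t,\bm x)$ by \eqref{eq:modH} and \eqref{eq:material-positivity}, the forcing $A_V^{-1}\bm f$ still lies in $L^1_{\rm loc}([s,\infty);\mathcal H_{\rm v})$. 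We will check measurability by writing $A_V(r)^{-1}$ as a multiplication operator depending continuously on $r$ away from the pulse, and piecewise continuously in general, since $\eta$ is smooth.

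Next, we check that \eqref{eq:duhamel} is well defined and continuous in $t$. The integrand $r\mapsto U(t,r)A_V(r)^{-1}\bm f(r)$ is strongly measurable because $U$ is strongly continuous in both variables. It is Bochner integrable by \eqref{eq:modUbound}, which gives
\[
\Bigl\|\int_s^tU(t,r)A_V(r)^{-1}\bm f(r)\dd r\Bigr\|\le C_1e^{C_2|t-s|}\kappa^{-1}\int_s^t\|\bm f(r)\|\dd r .
\]
Continuity in $t$ then follows from strong continuity and dominated convergence. To justify calling this the mild solution, we approximate $\bm w_s$ in $Y$ and $\bm f$ in $C^1([s,t];\mathcal H_{\rm v})$, or in $C([s,t];Y)$. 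For such data, $Y$-invariance and the differentiability properties of Kato's evolution family show that the formula gives a classical solution. For general data, the mild solution is by definition the $\mathcal H_{\rm v}$-limit of these classical solutions, and the estimate above shows that this limit is \eqref{eq:duhamel} itself.

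For uniqueness, take two mild solutions with the same data and call their difference $\bm Z$. It is a mild, and after approximation a limit of classical, solution of the homogeneous problem with $\bm Z(s)=0$. We will use the energy identity $\frac{d}{dt}\|\bm Z\|_t^2=(\partial_tA_V\,\bm Z,\bm Z)_0-2\,\mathrm{Re}\,(B_V\bm Z,\bm Z)_0$. The skew-adjointness from Lemma~\ref{lem:skew} removes the $\Lambda$ term. This gives $\|\bm Z(t)\|_t^2\le C\int_s^t\|\bm Z\|_r^2\dd r$, and Gronwall's inequality yields $\bm Z\equiv0$. Equivalently, $\bm Z(t)=U(t,s)\bm Z(s)=0$ by the uniqueness part of the preceding theorem.

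The main obstacle is justifying the energy identity, or equivalently the differentiation of $U(t,r)$ in $r$, at limited regularity. The coefficient $A_V$ depends on time through $\eta_{\tau_h}$, and $B_V$ contains $\partial_tV_m$. I would handle this with a density argument using the core property in Lemma~\ref{lem:skew}. The relevant fact is that $\partial_tV_m$ is bounded for fixed $h$, so all constants stay finite.
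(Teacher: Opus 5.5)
Your proposal is correct and follows the paper's own route, which is only sketched there: verify the formula for $\bm w_s\in Y$ and smooth $\bm f$ by differentiating, using the skew-symmetry from Lemma~\ref{lem:skew} to handle the $\Lambda$ term, then pass to general data by density via \eqref{eq:modUbound}; your version adds the Bochner-integrability, density and Gronwall details that the paper leaves out. Two minor points. First, in your energy identity the pairing must be the unweighted $L^2$ one, because the paper's $(\cdot,\cdot)_0$ is the $A_V(0)$-weighted product. Second, the uniform bound on $A_V(r)^{-1}$, whose last diagonal entry is $\epsilon_\Delta$, tacitly needs an upper bound on $\Delta\epsilon$ that \eqref{eq:modH} does not state --- the paper makes the same implicit assumption when it calls $A_V$ uniformly positive definite.
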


\begin{proof} 
Differentiate the right-hand side for data in $Y$ and smooth $\bm{f}$, then use Lemma \ref{lem:skew} and the bound \eqref{eq:modUbound}. 
\end{proof}

For the case of $V_m=0$, i.e. the background system without modulation, \eqref{eq:modmatrix} gives the time-independent background equation
\begin{equation*}
    L_0\bm{W}:=A_0(\bm{x})\partial_t \bm{W}+\Lambda \bm{W}+B_0(\bm{x})\bm{W}=0,
\end{equation*}
where, by \eqref{eq:modAB},
\begin{equation}\label{eq:bgAB}
\begin{aligned}
&A_0=\diag\left(\mu,\mu,\epsilon_0\epsilon_\infty,\frac{1}{\epsilon_{\Delta,0}}\right),
&&
\epsilon_{\Delta,0}:=\epsilon_0\Delta\epsilon, \\
&B_0=
\begin{pmatrix}
0 & 0 & 0 & 0\\
0 & 0 & 0 & 0\\
0 & 0 & \sigma+\dfrac{\epsilon_{\Delta,0}}{\tau_D} & -\dfrac1{\tau_D}\\
0 & 0 & -\dfrac1{\tau_D} & \dfrac1{\epsilon_{\Delta,0}\tau_D}
\end{pmatrix},
&&
L_0:=A_0\partial_t+\Lambda+B_0.
\end{aligned}
\end{equation}
We also define
\begin{equation}\label{eq:q0-definition}
q_0(\bm{x}):=\frac{\sigma(\bm{x})}{\epsilon_0\epsilon_\infty(\bm{x})} + \frac{\Delta\epsilon(\bm{x})}{\epsilon_\infty(\bm{x})\tau_D(\bm{x})}.
\end{equation}
Similar as the case of general $V_m$, assuming the starting time $s=0$, the background evolution is a group, denoted by $U_0(t)$. For initial data $\bm{w}_0\in\mathcal H_{\rm v}$, the background solution is
\begin{equation}\label{eq:unpert}
\bm{W}_0(t):=U_0(t)\bm{w}_0, \qquad \bm{W}_0(0)=\bm{w}_0.
\end{equation}

\subsection{Reversed component produced by material pulse}

Now we derive the decomposition formula of total field after the modulation occurrence. By rearranging terms, \eqref{eq:modmatrix} is rewritten as
\begin{equation*}
    L_0\bm{W} = -(A_V-A_0)(t)\partial_t\bm{W} - (B_V-B_0)(t)\bm{W}
\end{equation*}
Direct calculation implies that the third component is the only non-zero term. For a scalar function $u=u(t,\bm{x})$, we define
\begin{equation*}
\bm{\mathcal F}_m[u] :=-\epsilon_0\epsilon_\infty \left[\partial_t(V_mu)+q_0V_mu\right]\bm{e}_3 +\frac1{\tau_D}V_mu \bm{e}_4,
\end{equation*}
where $q_0$ is defined in \eqref{eq:q0-definition}. Let $M_m$ denote multiplication by $m$. Then
\begin{equation*}
L_0\bm{W}= \bm{\mathcal F}_m[E_z].
\end{equation*}

Define
\begin{equation*}
\bm{\Upsilon}_0:=\partial_t\bm{W}_0(0) =-A_0^{-1}(\Lambda+B_0)\bm{w}_0.
\end{equation*}

\begin{proposition}\label{prop:backpropagating}
For $t\geq T+\tau_h/2$, define
\begin{equation}\label{eq:wR-tailored}
\bm{w}_R^m(t):=\frac12\int_0^t \eta_{\tau_h}(r-T)U_0(t-r) M_m\Gamma\partial_r\bm{W}_0(r)\dd r.
\end{equation}
Let $\Gamma:=\diag(-1,-1,1,1)$, and $E_z^0$ be the third component of $\bm{W}_0$. Then the physical solution has the exact form
\begin{equation}\label{eq:physical-field-reversed-split}
\bm{W}(t)=\bm{W}_0(t)+\bm{w}_R^m(t)+\bm{N}_m(t),
\end{equation}
where
\begin{align}\label{eq:physical-remainder-definition}
\bm{N}_m(t):=&-\frac12\int_0^t U_0(t-r)V_m(r)\partial_r\bm{W}_0(r)\dd r\nonumber\\
&+\frac1\mu\int_0^t\eta_{\tau_h}(r-T)U_0(t-r)
\begin{pmatrix}
(\partial_{x_2}m)E_z^0(r)\\
-(\partial_{x_1}m)E_z^0(r)\\
0\\0
\end{pmatrix}\dd r\nonumber\\
&+\int_0^tU_0(t-r)A_0^{-1}
\bm{\mathcal F}_m[E_z^m-E_z^0](r)\dd r.
\end{align}
At time $2T$, the reversed component is
\begin{equation}\label{eq:physical-reversed-operator}
\begin{aligned}
\bm{w}_R^m(2T) = &\frac12\int_{-\tau_h/2}^{\tau_h/2}\eta_{\tau_h}(s) U_0(T-s)M_m \Gamma U_0(T+s)\bm{\Upsilon}_0\dd s.
\end{aligned}
\end{equation}
We denote the operator in \eqref{eq:physical-reversed-operator} by
\begin{equation}\label{eq:Kmdef}
K_{m,\tau_h}\bm{\Upsilon}_0:=\bm{w}_R^m(2T).
\end{equation}
\end{proposition}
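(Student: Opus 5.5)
The plan is to treat the modulation as a source for the background operator $L_0$ and apply Duhamel's formula \eqref{eq:duhamel} with $V_m=0$. I first assume $\bm w_0$ is regular, say $\bm w_0\in Y$ with $\Lambda A_0^{-1}(\Lambda+B_0)\bm w_0\in\mathcal H_{\rm v}$, so that $\partial_r\bm W_0$ is a classical $\mathcal H_{\rm v}$-valued $C^1$ function. I then pass to general data by density, using \eqref{eq:modUbound}.

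\emph{Step 1: reduction to a Duhamel integral.} Set $\bm W_1:=\bm W-\bm W_0$. Since $\supp\eta_{\tau_h}(\cdot-T)\subset[T-\tau_h/2,T+\tau_h/2]$ with $T-\tau_h/2>0$, we have $V_m=0$ near $t=0$. Hence $\bm W_1(0)=0$, and by the identity $L_0\bm W=\bm{\mathcal F}_m[E_z]$ derived above,
\begin{equation*}
\bm W_1(t)=\int_0^tU_0(t-r)A_0^{-1}\bm{\mathcal F}_m[E_z^0](r)\dd r+\int_0^tU_0(t-r)A_0^{-1}\bm{\mathcal F}_m[E_z^m-E_z^0](r)\dd r .
\end{equation*}
The second integral is exactly the last term of \eqref{eq:physical-remainder-definition}. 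It therefore remains to show that the first (linear) integral equals $\bm w_R^m(t)$ plus the first two terms of $\bm N_m(t)$.

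\emph{Step 2: removing the time derivative of $V_m$.} The third component of $A_0^{-1}\bm{\mathcal F}_m[E_z^0]$ contains $-\partial_r(V_mE_z^0)$. I integrate by parts in $r$. The boundary terms vanish, at $r=0$ because $V_m(0)=0$ and at $r=t$ because $t\ge T+\tau_h/2$. The derivative $\partial_rU_0(t-r)$ equals $U_0(t-r)A_0^{-1}(\Lambda+B_0)$, so the term becomes
\begin{equation*}
-\int_0^tU_0(t-r)A_0^{-1}(\Lambda+B_0)\bigl(V_mE_z^0\bm e_3\bigr)\dd r .
\end{equation*}
Next I write $\Lambda(mE_z^0\bm e_3)=m\Lambda(E_z^0\bm e_3)+\bigl((\partial_{x_2}m)E_z^0,-(\partial_{x_1}m)E_z^0,0,0\bigr)^\top$. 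After multiplication by $A_0^{-1}$ (the factor $1/\mu$ on the first two entries), the commutator piece produces the second term of $\bm N_m$. The $B_0$ piece combines with the $-q_0V_mE_z^0$ and $\tau_D^{-1}V_mE_z^0\bm e_4$ terms, using $\epsilon_0\epsilon_\infty q_0=\sigma+\epsilon_{\Delta,0}/\tau_D$.

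\emph{Step 3: component bookkeeping.} The background equations give
\begin{equation*}
\mu\partial_rH^0_{x}=-\partial_{x_2}E_z^0,\qquad \mu\partial_rH^0_{y}=\partial_{x_1}E_z^0,\qquad \partial_rP^0=(\epsilon_{\Delta,0}E^0_z-P^0)/\tau_D,
\end{equation*}
together with the $E_z$ equation. Using these, the remaining integrand $V_m\times(\cdots)$ should reduce to $V_m\,\diag(-1,-1,0,0)\,\partial_r\bm W_0$ up to terms that cancel. Since $\diag(-1,-1,0,0)=\tfrac12(\Gamma-I)$, this gives precisely
\begin{equation*}
\tfrac12\int\eta_{\tau_h}M_m\Gamma\,\partial_r\bm W_0\dd r-\tfrac12\int V_m\,\partial_r\bm W_0\dd r ,
\end{equation*}
that is, $\bm w_R^m(t)$ plus the first term of $\bm N_m$. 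This establishes \eqref{eq:physical-field-reversed-split}. I expect this algebraic matching, in particular checking that the $E_z$ and $P$ components cancel exactly against the $B_0$ terms, to be the main obstacle. It is a finite but sign-sensitive computation, and I would organize it by comparing the third and fourth entries separately.

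\emph{Step 4: formula at $t=2T$.} Finally, put $t=2T$ and substitute $r=T+s$, so that $\eta_{\tau_h}(r-T)=\eta_{\tau_h}(s)$ is supported in $|s|<\tau_h/2$ and $U_0(2T-r)=U_0(T-s)$. Since the generator commutes with the group, $\partial_r\bm W_0(r)=U_0(r)\partial_t\bm W_0(0)=U_0(T+s)\bm\Upsilon_0$. This yields \eqref{eq:physical-reversed-operator}, and $K_{m,\tau_h}$ is then defined by \eqref{eq:Kmdef}.
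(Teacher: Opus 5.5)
Your route is the paper's: Duhamel for $L_0$ with source $\bm{\mathcal F}_m[E_z^m]$, splitting off $E_z^m-E_z^0$, integrating the $\partial_r(V_mE_z^0)$ term by parts, using Faraday's law, then substituting $r=T+s$. However, Step 2 contains a sign error, and it breaks exactly the cancellation you single out as the main obstacle. The contribution in question is $-\int_0^tU_0(t-r)\,\partial_r\bigl(V_mE_z^0\bm e_3\bigr)\dd r$. The boundary terms vanish and, as you state correctly, $\partial_rU_0(t-r)=U_0(t-r)A_0^{-1}(\Lambda+B_0)$, so
\[
-\int_0^tU_0(t-r)\,\partial_r\bigl(V_mE_z^0\bm e_3\bigr)\dd r=+\int_0^tU_0(t-r)A_0^{-1}(\Lambda+B_0)\bigl(V_mE_z^0\bm e_3\bigr)\dd r,
\]
not minus. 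With the minus sign you wrote, the $B_0$ part adds to the terms $-q_0V_mE_z^0\bm e_3+\frac{\epsilon_{\Delta,0}}{\tau_D}V_mE_z^0\bm e_4$ instead of cancelling them, leaving $-2q_0V_mE_z^0\bm e_3+\frac{2\epsilon_{\Delta,0}}{\tau_D}V_mE_z^0\bm e_4$. Both the $\nabla m$ term and the magnetic part would also come out with the wrong sign. So Step 3 could not close as claimed.

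With the correct sign the obstacle disappears. For $u=V_mE_z^0$, using $\epsilon_0\epsilon_\infty q_0=\sigma+\epsilon_{\Delta,0}/\tau_D$, one has $A_0^{-1}B_0(u\bm e_3)=q_0u\bm e_3-\frac{\epsilon_{\Delta,0}}{\tau_D}u\bm e_4$. The third and fourth components therefore cancel identically, and the whole linear integral equals $\frac1\mu\int_0^tU_0(t-r)\bigl(\partial_{x_2}u,-\partial_{x_1}u,0,0\bigr)^\top\dd r$, which is the paper's intermediate identity. The background $E_z$ and $P$ evolution equations are not needed at all. The product rule in $\bm x$ gives the $\nabla m$ term of $\bm N_m$. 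Only the two Faraday equations are then used, to write $\frac1\mu V_m\bigl(\partial_{x_2}E_z^0,-\partial_{x_1}E_z^0,0,0\bigr)^\top=V_m\diag(-1,-1,0,0)\,\partial_r\bm W_0=\tfrac12V_m(\Gamma-I)\,\partial_r\bm W_0$, which yields $\bm w_R^m$ and the first term of $\bm N_m$. With this correction, your Steps 1 and 4 go through exactly as in the paper.
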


\begin{proof} 
Duhamel's formula \eqref{eq:duhamel} gives
$$
\bm{W}(t)=\bm{W}_0(t)+\int_0^tU_0(t-r)A_0^{-1} \bm{\mathcal F}_m[E_z^m](r)\dd r.
$$
Split $E_z^m=E_z^0+(E_z^m-E_z^0)$. The second part is the last line of \eqref{eq:physical-remainder-definition}. For the first part,
$$
A_0^{-1}\bm{\mathcal F}_m[E_z^0] =-\partial_r(V_mE_z^0)\bm{e}_3-q_0V_mE_z^0\bm{e}_3 +\frac{\epsilon_{\Delta,0}}{\tau_D}V_mE_z^0\bm{e}_4.
$$
Integrating the time derivative by parts and using the explicit matrices $\Lambda$ and $B_0$ gives
$$
\int_0^tU_0(t-r)A_0^{-1}\bm{\mathcal F}_m[E_z^0](r)\dd r = \frac1\mu\int_0^tU_0(t-r)\nabla^\perp(V_mE_z^0)(r)\dd r,
$$
where $\nabla^\perp:=(\partial_{x_2}, -\partial_{x_1})$. From the first two equations in \eqref{eq:system}, 
$$
\frac1\mu V_m\nabla^\perp E_z^0=-V_m\partial_r\bm{W}_H, \qquad \bm{W}_H:=\frac12(I-\Gamma)\bm{W}_0,
$$
one obtains
$$
\frac1\mu V_m\nabla^\perp E_z^0=-\frac12V_m\partial_r\bm{W}_0+\frac12V_m\Gamma\partial_r\bm{W}_0.
$$
The second term on the right-hand side gives \eqref{eq:wR-tailored}. Collecting the remaining terms gives \eqref{eq:physical-remainder-definition}. This proves \eqref{eq:physical-field-reversed-split}.

Because the background coefficients are independent of time, $\partial_r\bm{W}_0(r)=U_0(r)\bm{\Upsilon}_0$. In \eqref{eq:wR-tailored}, setting $r=T+s$ gives \eqref{eq:physical-reversed-operator}. 
\end{proof}

\section{Semiclassical Mode Reduction and the Returned-Wave Symbol}\label{sec:semiclassical}

Proposition~\ref{prop:backpropagating} expresses the reversed component generated by the material pulse in terms of the full Maxwell--Debye propagator $U_0(t)$. To analyze this operator in the high-frequency regime, we now reduce the propagating TM modes of the vector system to scalar semiclassical dynamics. The purpose of this section is to obtain a microlocal scalar representation of the returned field and, ultimately, an explicit formula for the principal symbol of the returned operator. This formula will provide the main input for the modulation construction and refocusing analysis in the subsequent sections.

We begin with the positive propagating TM branch and identify its dispersion relation $\omega$, normalized polarization $\bm r$, Hamiltonian flow $\bm\Phi^t$, and leading material attenuation $\gamma$. The corresponding injection and extraction symbols are constructed, so that the corresponding scalar evolution intertwines with the full Maxwell--Debye dynamics modulo negligible errors. We also factor the positive-branch scalar propagator into Hamiltonian transport along $\bm\Phi^t$ and an accumulated damping factor. The returned field involves the involution $\Gamma$, which maps the positive TM polarization to the negative one. Repeating the scalar reduction on the negative branch gives the corresponding coefficient $\beta_-=\gamma-\kappa$. Using these two scalar propagators, we compress the physical returned operator to a scalar operator and compute its principal symbol for pulse width $\tau_h=\nu h$. The resulting symbol separates the effects of the temporal pulse response, the spatial modulation evaluated at the outgoing arrival point, and the attenuation accumulated during the round trip. We conclude by recording estimates uniform for $h$-dependent mirror times and modulation functions. The semiclassical notation and conventions used throughout are collected in Appendix~\ref{sec:semi-convention}.

\subsection{The positive TM branch and its Hamiltonian flow}

Let $0<h\ll1$ denote the wavelength scale. We look for a high-frequency solution of the background equation in the form
\begin{equation}\label{eq:wkb-ansatz}
    \bm W(t,\bm x)=\bm a_h(t,\bm x)e^{\ii\phi(t,\bm x)/h},
\end{equation}
where $\bm a_h$ is a classical amplitude,
\begin{equation*}
    \bm a_h\sim\sum_{j=0}^{\infty}h^j\bm a_j.
\end{equation*}
Here the asymptotic notation means that for every compact $K_{t,\bm x}$, every $k,N\in\mathbb N$, and every differential operator $D_{t,\bm x}$ of order at most $k$,
\begin{equation*}
 \sup_{(t,\bm x)\in K_{t,\bm x}}\left|D_{t,\bm x}\left(\bm a_h-\sum_{i=0}^{N-1}h^i\bm a_i\right)\right|\le C_{K_{t,\bm x},k,N}h^N.
\end{equation*}

We are aimed at solving $\bm W$ asymptotically. To this end, we need to asymptotically solve $\bm a_h$ and $\phi$. Set $\bm\xi:=\nabla_{\bm x}\phi$. Substitution of \eqref{eq:wkb-ansatz} into $L_0\bm W=0$ gives
\begin{equation*}
\begin{aligned}
0=e^{\ii\phi/h}\bigg(&\frac{\ii}{h}\left[(\partial_t\phi)A_0(\bm x)+\widetilde\Lambda(\bm\xi)\right]\bm a_h+A_0\partial_t\bm a_h+\Lambda\bm a_h+B_0\bm a_h\bigg),
\end{aligned}
\end{equation*}
where
\begin{equation*}
    \widetilde\Lambda(\bm\xi)
    =D_1\xi_1+D_2\xi_2
    =\begin{pmatrix}
        0 & 0 & \xi_2 & 0\\
        0 & 0 & -\xi_1 & 0\\
        \xi_2 & -\xi_1 & 0 & 0\\
        0 & 0 & 0 & 0
      \end{pmatrix}.
\end{equation*}
The coefficient of $h^{-1}$ determines the principal polarization. Writing $\omega=-\partial_t\phi$, we must solve
\begin{equation}\label{eq:leadingorder}
    \widetilde\Lambda(\bm\xi)\bm r(\bm x,\bm\xi) = \omega(\bm x,\bm\xi)A_0(\bm x)\bm r(\bm x,\bm\xi).
\end{equation}

If we write $\bm r=(h_1,h_2,e,p_{\rm pol})^\top$, equation \eqref{eq:leadingorder} then becomes
\begin{equation*}
    \xi_2e=\omega\mu h_1,\qquad
    -\xi_1e=\omega\mu h_2,\qquad
    \xi_2h_1-\xi_1h_2=\omega\epsilon_0\epsilon_\infty e,
    \qquad
    0=\omega\epsilon_{\Delta,0}^{-1}p_{\rm pol}.
\end{equation*}
For propagating branch, $\omega\neq 0$, so we have $p_{\rm pol}=0$. Here we firstly solve positive propagating branch, which requires $\omega>0$. Eliminating $h_1$ and $h_2$ then gives
\begin{equation}\label{eq:positive-dispersion}
    \omega(\bm x,\bm\xi)
    =\frac{|\bm\xi|}{\sqrt{\mu\epsilon_0\epsilon_\infty(\bm x)}}.
\end{equation}
Choosing the $A_0$-normalized eigenvector gives
\begin{equation}\label{eq:explicit-r-positive-branch}
    \bm r(\bm x,\bm\xi)
    =\left(\frac{\widehat\xi_2}{\sqrt{2\mu}}, -\frac{\widehat\xi_1}{\sqrt{2\mu}}, \frac{1}{\sqrt{2\epsilon_0\epsilon_\infty(\bm x)}}, 0\right)^\top, \qquad
    \widehat{\bm\xi}:=\frac{\bm\xi}{|\bm\xi|},
\end{equation}
so that
\begin{equation*}
    \bm r(\bm x,\bm\xi)^*A_0(\bm x)\bm r(\bm x,\bm\xi)=1.
\end{equation*}

Let $\mathcal U\subset T^*\mathbb R^2\setminus\{0\}$ be an open conic set on which the branch is defined. The functions $\omega$ and $\bm r$ are smooth on $\mathcal U$, $\omega$ is homogeneous of degree one in $\bm\xi$, and the eigenvalue $\omega$ is simple. Hence the leading amplitude on this branch has the form
\begin{equation*}
    \bm a_0(t,\bm x)=a_0(t,\bm x)\bm r(\bm x,\nabla_{\bm x}\phi(t,\bm x)).
\end{equation*}

Now we calculate $\bm{x}(t)$ and $\bm{\xi}(t)$. From the definition of $\omega$ we observe that
\begin{align*}
    \bm{\xi}'(t) &= \frac{\mathrm{d}}{\mathrm{d}t}\nabla_{\bm{x}}\phi(t,\bm{x}(t)) = \nabla_{\bm{x}}\partial_t\phi(t,\bm{x}(t)) + D_{\bm{x}}^2\phi(t,\bm{x}(t))\bm{x}'(t), \\
    &= -\nabla_{\bm{x}}\omega(\bm{x}(t),\bm{\xi}(t)) - D_{\bm{x}}^2\phi(t,\bm{x}(t))\partial_{\bm{\xi}}\omega(\bm{x}(t),\bm{\xi}(t))+D_{\bm{x}}^2\phi(t,\bm{x}(t))\bm{x}'(t).
\end{align*}
Let $\bm{x}(t)$ solve the equation $\bm{x}'(t)=\partial_{\bm{\xi}}\omega(\bm{x}(t),\bm{\xi}(t))$, we then get 
$$
\bm{\xi}'(t)=-\partial_{\bm{x}}\omega(\bm{x}(t),\bm{\xi}(t)).
$$
As a result, if we set $\bm\rho:=(\bm x,\bm\xi)$ and denote by $\bm\Phi^t(\bm\rho)$ the flow starting from $\bm\rho$ and evolving for time $t$, then
\begin{equation*}
    \frac{\dd}{\dd t}\bm{\Phi}^t(\bm{\rho})=\bm H_\omega(\bm{\Phi}^t(\bm{\rho})), \qquad \bm{\Phi}^0(\bm{\rho})=\bm{\rho}, 
\end{equation*}
where 
\begin{equation*}
    \bm H_\omega = \sum_{j=1}^2\left(\partial_{\xi_j}\omega\partial_{x_j}   -\partial_{x_j}\omega\partial_{\xi_j}\right)
\end{equation*}
denotes the Hamilton vector field of $\omega$. The field implies that a positive-branch wave packet centered at $\bm\rho$ moves, to leading order, along $\bm\Phi^t(\bm\rho)$. $\phi$ could be immediately solved with $\omega(\bm{x},\bm{\xi})=-\partial_t\phi$ after $\bm{x}(t)$ and $\bm{\xi}(t)$ are solved. 

The matrix $B_0$ enters one order below the principal spatial operator. Its contribution on the selected mode is obtained by testing against the normalized left mode
\begin{equation*}
    \bm\ell(\bm x,\bm\xi):=\bm r(\bm x,\bm\xi)^*A_0(\bm x),
    \qquad
    \bm\ell\bm r=1.
\end{equation*}
The resulting material damping coefficient is
\begin{equation*}
    \gamma(\bm x,\bm\xi):=\bm r(\bm x,\bm\xi)^*B_0(\bm x)\bm r(\bm x,\bm\xi).
\end{equation*}
It is nonnegative because
\begin{equation*}
    \bm u^*B_0(\bm x)\bm u=\sigma(\bm x)|u_3|^2+\frac{|\epsilon_{\Delta,0}(\bm x)u_3-u_4|^2}{\epsilon_{\Delta,0}(\bm x)\tau_D(\bm x)}\ge0.
\end{equation*}
Using \eqref{eq:bgAB} and \eqref{eq:explicit-r-positive-branch},
\begin{equation*}
    \gamma(\bm x,\bm\xi)=\frac12\left(\frac{\sigma(\bm x)}{\epsilon_0\epsilon_\infty(\bm x)}+\frac{\Delta\epsilon(\bm x)}{\epsilon_\infty(\bm x)\tau_D(\bm x)}\right)
    =\frac12q_0(\bm x).
\end{equation*}

The coefficient $\gamma$ has a direct physical interpretation. It is obtained by projecting the dissipative part $B_0$ of the Maxwell--Debye system onto the normalized positive-TM polarization. Thus $\gamma$ represents the leading \emph{material attenuation rate} along the selected branch. In the present model its two contributions come respectively from Ohmic conduction and Debye relaxation:
$$
    \gamma(\bm x,\bm\xi)=\frac12\frac{\sigma(\bm x)}{\epsilon_0\epsilon_\infty(\bm x)}+\frac12\frac{\Delta\epsilon(\bm x)}{\epsilon_\infty(\bm x)\tau_D(\bm x)}.
$$
In particular, $\gamma\ge0$, and the material contribution to the amplitude accumulated along a ray segment $\bm\Phi^s(\bm\rho)$ is
$$
    \exp\left(-\int_0^t\gamma(\bm\Phi^s(\bm\rho))\dd s\right).
$$
This material attenuation should be distinguished from the geometric transport correction introduced below, which arises from the phase-space variation of the polarization rather than from $B_0$.

\subsection{Constructing the scalar branch dynamics}

The previous calculation gives the leading polarization $\bm r$ and the leading material loss $\gamma$. This is not yet enough to reduce the vector equation, because $\bm r$ varies in phase space, applying the vector generator to $\Op_h(\bm r)u$ also produces order-zero terms in the other modes. We now correct the mode symbol and determine the scalar lower-order term so that the scalar and vector evolutions agree to all orders in $h$.

The background equation has the semiclassical form
\begin{equation}\label{eq:semiclassical-system}
    \left(\partial_t+\frac{\ii}{h}\Op_h(A_0^{-1}\widetilde\Lambda)+\Op_h(A_0^{-1}B_0)\right)\bm W=0.
\end{equation}
Set
\begin{equation*}
    \mathsf M_{\rm md}(\bm x,\bm\xi):=A_0(\bm x)^{-1}\widetilde\Lambda(\bm\xi),
    \qquad
    \mathsf C_{\rm md}(\bm x):=A_0(\bm x)^{-1}B_0(\bm x).
\end{equation*}
On $\mathcal U$, the symbols $\mathsf M_{\rm md}$ and $\omega$ have order $1$, while $\mathsf C_{\rm md}$, $\bm r$, and $\bm\ell$ have order $0$. Moreover,
\begin{equation*}
    \mathsf M_{\rm md}\bm r=\omega\bm r,
    \qquad
    \bm\ell \mathsf M_{\rm md}=\omega\bm\ell,
    \qquad
    \bm\ell \mathsf C_{\rm md}\bm r=\gamma.
\end{equation*}
Define
\begin{equation*}
    \Pi:=\bm r\bm\ell,
    \qquad
    Q:=I_4-\Pi.
\end{equation*}
The projection $\Pi$ selects the positive mode, while $Q$ selects the other modes. Both projections have order $0$.

Fix $T>0$ and a compact set $K\Subset\mathcal U$. Choose $\delta_T>0$ such that
\begin{equation*}
    K_{T,\delta_T}:=\left\{\bm\Phi^s(\bm\rho):\bm\rho\in K, |s|\le T+\delta_T\right\}\Subset\mathcal U.
\end{equation*}
Since $\omega$ is simple, its distance from the remaining spectrum of $\mathsf M_{\rm md}$ is bounded below on this compact flow tube. Hence
\begin{equation*}
    Q(\mathsf M_{\rm md}-\omega)Q:\operatorname{Ran}Q\longrightarrow\operatorname{Ran}Q
\end{equation*}
is invertible, and its inverse is a symbol of order $-1$. All constructions below are made on this flow tube and are then localized by fixed microlocal cutoffs.

For the left Kohn--Nirenberg quantization, the symbol product satisfies
\begin{equation}\label{eq:symbol-product}
    a_h\# b_h = a_hb_h+\frac{h}{\ii}\sum_{j=1}^2(\partial_{\xi_j}a_h)(\partial_{x_j}b_h)+h^2r_{a,b,h},
\end{equation}
where $r_{a,b,h}$ has order two lower than the product $a_hb_h$.

\subsubsection{Construction of injection symbol.}\label{sec:injection-symbol}
Suppose that a scalar amplitude $u$ is inserted into the vector system through a symbol $\bm r_h$. We want to find a symbol $\bm r_h$, such that a scalar amplitude $u$, applied by $\Op_h(\bm{r}_h)$, is the vector solution of \eqref{eq:semiclassical-system}, and 
\begin{equation}\label{eq:intertwine}
\left(\partial_t+\frac{\ii}{h}\Op_h(\mathsf M_{\rm md})+\Op_h(\mathsf C_{\rm md})\right)\Op_h(\bm r_h)u=\Op_h(\bm r_h)\left(\partial_t+\frac{\ii}{h}\Op_h(\omega)+\Op_h(\beta_h)\right)u \quad\text{modulo }O(h^\infty).
\end{equation}
Because the symbols are independent of time, this requirement is equivalent to
\begin{equation}\label{eq:right-symbolic-intertwining}
    \mathsf M_{\rm md}\#\bm r_h-\bm r_h\#\omega-\ii h\left(\mathsf C_{\rm md}\#\bm r_h-\bm r_h\#\beta_h\right)\sim0.
\end{equation}
We seek formal expansions
\begin{equation*}
    \bm r_h\sim\bm r+h\bm r_1+h^2\bm r_2+\cdots,
    \qquad
    \beta_h\sim\beta+h\beta_1+h^2\beta_2+\cdots.
\end{equation*}
The term of order $h^0$ in \eqref{eq:right-symbolic-intertwining} is exactly the eigenvalue equation $\mathsf M_{\rm md}\bm r=\omega\bm r$.

At order $h$, define
\begin{equation*}
    \bm{\mathcal D}:=\sum_{j=1}^2\left[(\partial_{\xi_j}\mathsf M_{\rm md})(\partial_{x_j}\bm r)-(\partial_{\xi_j}\bm r)(\partial_{x_j}\omega)\right].
\end{equation*}
Matching order $h$ of \eqref{eq:right-symbolic-intertwining} yields
\begin{equation}\label{eq:first-right-correction}
    (\mathsf M_{\rm md}-\omega)\bm r_1 = \ii\left(\bm{\mathcal D}+\mathsf C_{\rm md}\bm r-\bm r\beta\right).
\end{equation}
The left side has no component in the selected eigenspace. Therefore the right side must also have no such component. Since $\bm\ell(\mathsf M_{\rm md}-\omega)=0$, the right-hand side of \eqref{eq:first-right-correction} should be 0 as well when applied by $\bm\ell$, i.e.
$$
    0 = \bm{\ell}\bm{\mathcal D}+\gamma-\beta.
$$

Define
\begin{equation*}
    \kappa:=\bm\ell\bm{\mathcal D}.
\end{equation*}
Then the solvability condition gives
\begin{equation}\label{eq:beta-material-geometric-split}
    \beta=\gamma+\kappa.
\end{equation}
The two terms in \eqref{eq:beta-material-geometric-split} have different origins. As explained above, $\gamma=\bm r^*B_0\bm r$ is the material attenuation rate. In contrast, $\kappa$ does not arise from the dissipative matrix $B_0$. Tt is generated by the phase-space variation of the principal symbol and of the selected polarization $\bm r$:
$$
    \kappa=\bm\ell\sum_{j=1}^2\left[(\partial_{\xi_j}\mathsf M_{\rm md})(\partial_{x_j}\bm r)-(\partial_{\xi_j}\bm r)(\partial_{x_j}\omega)\right],
$$
We therefore refer to $\kappa$ as the \emph{geometric transport correction}. It describes the order-zero change of the scalar amplitude associated with transport of the varying eigenspace along the Hamiltonian flow. Thus $\beta$ is the total leading amplitude-transport coefficient on the positive branch, consisting of the physical material contribution $\gamma$ and the geometric contribution $\kappa$.

The vector $\bm{\mathcal D}+\mathsf C_{\rm md}\bm r-\bm r\beta$ then lies in $\operatorname{Ran}Q$, where $\mathsf M_{\rm md}-\omega$ is invertible. We therefore choose
\begin{equation*}
    \bm r_1:=\ii\bigl(Q(\mathsf M_{\rm md}-\omega)Q\bigr)^{-1}Q\bigl(\bm{\mathcal D}+\mathsf C_{\rm md}\bm r-\bm r\beta\bigr),  \qquad\bm\ell\bm r_1=0.
\end{equation*}
The inverse has order $-1$, so $\bm r_1$ has order $-1$.

The same two steps determine all higher coefficients. Notice that such construction means $\bm{r}_j\in\operatorname{Ran} Q$, so $\bm\ell\bm r_j=0$ for $j\ge1$.

\subsubsection{Construction of the extraction symbol.}\label{sec:extraction-symbol}
The injection symbol inserts a scalar amplitude into the vector system. To recover that amplitude, we seek a row symbol
\begin{equation*}
    \bm\ell_h\sim\bm\ell+h\bm\ell_1+h^2\bm\ell_2+\cdots
\end{equation*}
with two properties:
\begin{equation}\label{eq:left-symbolic-intertwining}
    \bm\ell_h\#\mathsf M_{\rm md}-\omega\#\bm\ell_h-\ii h\left(\bm\ell_h\#\mathsf C_{\rm md}-\beta_h\#\bm\ell_h\right)\sim0,
\end{equation}
and
\begin{equation}\label{eq:symbolic-normalization}
    \bm\ell_h\#\bm r_h\sim1.
\end{equation}
The first relation makes extraction intertwine with the evolution like \eqref{eq:intertwine}, and the second relation makes extraction undo injection.

At order $h$, \eqref{eq:left-symbolic-intertwining} gives
\begin{equation*}
    \bm\ell_1(\mathsf M_{\rm md}-\omega)=\ii\bigg(\sum_{j=1}^2\left[(\partial_{\xi_j}\bm\ell)(\partial_{x_j}\mathsf M_{\rm md})-(\partial_{\xi_j}\omega)(\partial_{x_j}\bm\ell)\right]+\bm\ell \mathsf C_{\rm md}-\beta\bm\ell\bigg).
\end{equation*}
Differentiating $\mathsf M_{\rm md}\bm r=\omega\bm r$, $\bm\ell \mathsf M_{\rm md}=\omega\bm\ell$, and $\bm\ell\bm r=1$ shows that 
$$
\sum_{j=1}^2\left[(\partial_{\xi_j}\bm\ell)(\partial_{x_j}\mathsf M_{\rm md})-(\partial_{\xi_j}\omega)(\partial_{x_j}\bm\ell)\right]\bm{r} = \bm\ell\bm{\mathcal D}.
$$
Since $(\mathsf M_{\rm md}-\omega)\bm{r}=0$, it follows $\bm\ell_1(\mathsf M_{\rm md}-\omega)\bm{r}=0$, so 
$$
\ii\bigg(\sum_{j=1}^2\left[(\partial_{\xi_j}\bm\ell)(\partial_{x_j}\mathsf M_{\rm md})-(\partial_{\xi_j}\omega)(\partial_{x_j}\bm\ell)\right]+\bm\ell \mathsf C_{\rm md}-\beta\bm\ell\bigg)\bm{r}=0.
$$
The two equations above again yield the condition $\beta=\gamma+\bm\ell\bm{\mathcal D}$.

The complementary part $\bm\ell_1Q$ is determined by
\begin{equation}\label{eq:l1q}
    \bm\ell_1Q=\ii\bigg(\sum_{j=1}^2\left[(\partial_{\xi_j}\bm\ell)(\partial_{x_j}\mathsf M_{\rm md})-(\partial_{\xi_j}\omega)(\partial_{x_j}\bm\ell)\right]+\bm\ell \mathsf C_{\rm md}-\beta\bm\ell\bigg)\bigl(Q(\mathsf M_{\rm md}-\omega)Q\bigr)^{-1}.
\end{equation}
The component along $\bm\ell$ is fixed by the normalization
\eqref{eq:symbolic-normalization}. Since $\bm\ell\bm r_1=0$, its first-order part is
\begin{equation}\label{eq:l1r}
    \bm\ell_1\bm r=-\frac1\ii\sum_{j=1}^2(\partial_{\xi_j}\bm\ell)(\partial_{x_j}\bm r).
\end{equation}
\eqref{eq:l1q} and \eqref{eq:l1r} determine $\bm\ell_1$, which has order $-1$. At every higher order, the left intertwining equation determines $\bm\ell_jQ$, and the normalization determines $\bm\ell_j\bm r$.

To conclude, the three formal series gives classical symbols
\begin{equation}\label{eq:full-mode-symbols}
\begin{aligned}
    \bm r_h&=\bm r+h\bm r_1+h^2\widetilde{\bm r}_h,\\
    \bm\ell_h&=\bm\ell+h\bm\ell_1+h^2\widetilde{\bm\ell}_h,\\
    \beta_h&=\beta+h\widetilde\beta_h.
\end{aligned}
\end{equation}
Here $\widetilde{\bm r}_h$ and $\widetilde{\bm\ell}_h$ are uniformly of order $-2$, and $\widetilde\beta_h$ is uniformly of order $-1$.

\subsubsection{Mode injection and extraction operators.}
Define
\begin{equation*}
    J_h:=\Op_h(\bm r_h), \qquad J_h^{\#}:=\Op_h(\bm\ell_h).
\end{equation*}
The operator $J_h$ inserts a scalar amplitude into the positive vector mode, and $J_h^{\#}$ extracts that amplitude. They are uniformly bounded from $\mathcal H_{\rm s}$ to $\mathcal H_{\rm v}$ and from $\mathcal H_{\rm v}$ to $\mathcal H_{\rm s}$, respectively.

Discussions in section \ref{sec:injection-symbol} and \ref{sec:extraction-symbol} give, microlocally on $K_{T,\delta_T}$,
\begin{align}
\left(\partial_t+\frac{\ii}{h}\Op_h(\mathsf M_{\rm md})+\Op_h(\mathsf C_{\rm md})\right)J_h &= J_h\left(\partial_t+\frac{\ii}{h}\Op_h(\omega)+\Op_h(\beta_h)\right)+R_h^{\infty,+},
\label{eq:intertwine-relation}\\
J_h^{\#}\left(\partial_t+\frac{\ii}{h}\Op_h(\mathsf M_{\rm md})+\Op_h(\mathsf C_{\rm md})\right) &= \left(\partial_t+\frac{\ii}{h}\Op_h(\omega)+\Op_h(\beta_h)\right)J_h^{\#}+\widetilde R_h^{\infty,+}.
\label{eq:left-intertwine-relation}
\end{align}
where $R_h^{\infty,+}$ and $\widetilde R_h^{\infty,+}$ are negligible operators.

The normalization \eqref{eq:symbolic-normalization} gives
\begin{equation}\label{eq:Jh-identities}
    J_h^{\#}J_h=I+R_h^{\infty,{\rm n}},
    \qquad
    J_hJ_h^{\#}=\Op_h(\Pi)+hR_h^{\rm p},
\end{equation}
where, $R_h^{\infty,{\rm n}}$ is negligible, and $R_h^{\rm p}$ is uniformly of order $-1$, hence bounded from $H_h^s(\mathbb R^2;\mathbb C^4)$ to $H_h^{s+1}(\mathbb R^2;\mathbb C^4)$ for every fixed $s$, and
\begin{equation*}
    \|J_hJ_h^{\#}-\Op_h(\Pi)\|_{\mathcal H_{\rm v}\to\mathcal H_{\rm v}}
    \le Ch.
\end{equation*}

The preceding construction reduces the positive vector mode to a scalar equation. If $u(t)$ denotes the scalar amplitude, then the intertwining relation \eqref{eq:intertwine} shows that $u$ should satisfy
$$
    (\partial_t+\frac{\ii}{h}\Op_h(\omega)+\Op_h(\beta_h))u=0. 
$$

\begin{definition}
The positive-branch scalar propagator $S_h(t)$ is the strongly continuous propagator on $\mathcal H_{\rm s}$, with domain $H_h^1(\mathbb R^2)$, defined by
\begin{equation}\label{eq:positive-scalar-propagator}
    \left(\partial_t+\frac{\ii}{h}\Op_h(\omega)+\Op_h(\beta_h)\right)S_h(t)=0,
    \qquad
    S_h(0)=I.
\end{equation}
\end{definition}
\begin{lemma}\label{lem:scalar-energy-estimate}
Let $S_h(t)$ be defined as in \eqref{eq:positive-scalar-propagator}. Then, for every $s\in\mathbb R$ and every $T_0>0$,
$$
\sup_{|t|\le T_0}\|S_h(t)\|_{H_h^s\to H_h^s}\le C_{s,T_0},
$$
uniformly for small $h$.
\end{lemma}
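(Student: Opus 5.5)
The plan is a semiclassical energy estimate in $H_h^s$ via conjugation by $\Lambda_s:=\Op_h(\langle\bm\xi\rangle^s)$, combined with Gronwall's inequality. Existence of $S_h(t)$ as a strongly continuous propagator is taken from the definition, so only the uniform bound needs proof. Write $P_h:=\frac{\ii}{h}\Op_h(\omega)+\Op_h(\beta_h)$. Since $\omega$ is real and is used only microlocally on $K_{T,\delta_T}$, I will first replace $\omega$ by a real symbol of order one that agrees with it near $K_{T,\delta_T}$ and is elliptic-like globally (for instance a smooth extension homogeneous for large $|\bm\xi|$ and cut off near $\bm\xi=0$). I assume the symbol in \eqref{eq:positive-scalar-propagator} is understood this way, or alternatively that $\Op_h(\omega)$ is taken as its Weyl or symmetrized version; either way only the reality of the principal symbol is used.

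\textbf{Step 1: $s=0$.} For $u(t)=S_h(t)u_0$ with $u_0\in H_h^1$ I compute
\[
\frac{\dd}{\dd t}\|u\|^2=-2\,\mathrm{Re}\Bigl(\frac{\ii}{h}\Op_h(\omega)u,u\Bigr)-2\,\mathrm{Re}\bigl(\Op_h(\beta_h)u,u\bigr).
\]
The second term is bounded by $C\|u\|^2$, because $\beta_h$ has order $0$ and is therefore $L^2$-bounded uniformly in $h$. For the first term, $\omega$ is real, so $\Op_h(\omega)-\Op_h(\omega)^*=\Op_h(\omega-\bar\omega)+h\Op_h(r)$ with $r$ of order $0$. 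This uses the adjoint formula $a^*\sim\bar a+\frac{h}{\ii}\sum_j\partial_{x_j}\partial_{\xi_j}\bar a+\dots$, analogous to \eqref{eq:symbol-product}. Hence $\frac{1}{h}\|\Op_h(\omega)-\Op_h(\omega)^*\|_{L^2\to L^2}\le C$ by Calderón--Vaillancourt, and
\[
\frac{\dd}{\dd t}\|u\|^2\le C\|u\|^2 .
\]
Gronwall's inequality gives $\|u(t)\|\le e^{C|t|}\|u_0\|$ for both signs of $t$ (for negative $t$ the sign of the generator flips, but the same estimate holds). Density of $H_h^1$ extends the bound to all data.

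\textbf{Step 2: general $s$.} Set $v:=\Lambda_s u$. Then $\partial_t v+P_h v=[P_h,\Lambda_s]\Lambda_s^{-1}v$, where $\Lambda_s^{-1}$ is a parametrix with an $O(h^\infty)$ remainder, or the exact inverse if we use $\Op_h^w$. By \eqref{eq:symbol-product} the principal part of the commutator $\frac{\ii}{h}[\Op_h(\omega),\Lambda_s]$ is $\Op_h\bigl(\{\omega,\langle\bm\xi\rangle^s\}\bigr)$ up to lower order. Since $\{\omega,\langle\bm\xi\rangle^s\}=-\partial_x\omega\cdot\partial_\xi\langle\bm\xi\rangle^s$ has order $s$, composing with $\Lambda_s^{-1}$ yields an operator of order $0$, bounded on $L^2$ uniformly in $h$. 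The commutator $[\Op_h(\beta_h),\Lambda_s]$ is $O(h)$ of order $s-1$, so it is even better. Therefore $v$ satisfies the Step 1 equation with an additional bounded forcing $\le C\|v\|$, the same energy argument gives $\|v(t)\|\le e^{C_s|t|}\|v(0)\|$, and equivalence of $\|\Lambda_s\cdot\|$ with the $H_h^s$ norm finishes the proof.

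\textbf{Main obstacle.} The key point is the uniform-in-$h$ boundedness of $h^{-1}(\Op_h(\omega)-\Op_h(\omega)^*)$ and of $h^{-1}[\Op_h(\omega),\Lambda_s]\Lambda_s^{-1}$. Both require $\omega$ to be a genuine real symbol in $S^1$ globally; the cone-restricted homogeneous $\omega$, which is singular at $\bm\xi=0$, does not qualify. I will handle this by the extension and cutoff described above. A secondary concern is justifying the differentiation of the energy for $H_h^1$ data; I will handle it by mollification, or by working with $H_h^{s+1}$ data and using density.
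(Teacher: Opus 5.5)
Your proposal is correct and follows essentially the same route as the paper. The paper also proves an $L^2$ energy estimate, using that $\Op_h(\omega)$ is self-adjoint modulo $h\cdot(\text{order }0)$ because $\omega$ is real, applies Gronwall, and then conjugates by $\langle hD\rangle^s$ to reduce the $H_h^s$ bound to the $L^2$ case. Your $\Lambda_s=\Op_h(\langle\bm\xi\rangle^s)$ is exactly $\langle hD\rangle^s$, and its inverse is exact because the symbol is $\bm x$-independent. Your point that $\omega$ must first be extended to a genuine global real symbol of order one away from $\bm\xi=0$ makes explicit what the paper leaves to its appendix convention on extending symbols. It does not change the argument.
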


\begin{proof}
Let $u(t)=S_h(t)u_0$. Since $\omega$ is real, semiclassical symbolic calculus gives
$$
\Op_h(\omega)^*=\Op_h(\omega)+h\Op_h(r_h),
$$
with $r_h$ uniformly of order $0$. Since $\beta_h$ is also uniformly of order $0$,
$$
\frac{\dd}{\dd t}\|u(t)\|_{L^2}^2=-2\Re\left\langle\frac{\ii}{h}\Op_h(\omega)u+\Op_h(\beta_h)u,u\right\rangle\le C\|u(t)\|_{L^2}^2,
$$
where $C$ is independent of $h$. Gronwall's inequality proves the lemma for $s=0$.

For general $s\in\mathbb R$, let $\Lambda_h^s=\langle hD\rangle^s$ and set
$v=\Lambda_h^su$. Symbolic calculus gives
$$
\Lambda_h^s\Op_h(\omega)\Lambda_h^{-s}=\Op_h(\omega)+h\Op_h(r_{s,h}), \qquad \Lambda_h^s\Op_h(\beta_h)\Lambda_h^{-s}=\Op_h(b_{s,h}),
$$
where $r_{s,h}$ and $b_{s,h}$ are uniformly of order $0$. Hence $v$ satisfies an equation of the same form,
$$
\partial_tv+\frac{\ii}{h}\Op_h(\omega)v+\Op_h(c_{s,h})v=0,
$$
with $c_{s,h}$ uniformly of order $0$. Applying the $L^2$ estimate above to $v$ yields
$$
\|u(t)\|_{H_h^s}\le C_{s,T_0}\|u_0\|_{H_h^s}, \qquad |t|\le T_0,
$$
uniformly for small $h$.
\end{proof}

With exactly the same method, we could prove an estimation for background propagator $U_0(t)$ as follows.
\begin{lemma}\label{lem:background-energy-estimate}
Let $U_0(t)$ be the propagator of the background system
$$
A_0\partial_t\bm W+\Lambda\bm W+B_0\bm W=0.
$$
Then, for every $s\in\mathbb R$ and $T_0>0$,
$$
\sup_{|t|\le T_0}\|U_0(t)\|_{H_h^s(\mathbb R^2;\mathbb C^4)\to H_h^s(\mathbb R^2;\mathbb C^4)}\le C_{s,T_0},
$$
uniformly for small $h$. 
\end{lemma}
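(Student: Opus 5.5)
The plan mirrors the proof of Lemma~\ref{lem:scalar-energy-estimate}, with the $L^2$ inner product replaced by the weighted energy inner product $(\bm u,\bm v)_{A_0}:=\int_{\mathbb R^2}\bm v^*A_0\bm u\dd\bm x$. This is the time-independent version of \eqref{eq:movingnorm}, and it is equivalent to the $\mathcal H_{\rm v}$ norm with constants independent of $h$ by \eqref{eq:modH}.

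\textbf{The case $s=0$.} Let $\bm W(t)=U_0(t)\bm w_0$ with $\bm w_0\in Y$ and differentiate the energy. Lemma~\ref{lem:skew}, applied with $V_m=0$, shows that $-A_0^{-1}\Lambda$ is skew-adjoint for $(\cdot,\cdot)_{A_0}$, so its contribution vanishes. What remains is
\begin{equation*}
\frac{\dd}{\dd t}\|\bm W(t)\|_{A_0}^2=-2\Re\int_{\mathbb R^2}\bm W^*B_0\bm W\dd\bm x\le0,
\end{equation*}
which is nonpositive because $B_0$ is positive semidefinite, as computed for $\gamma$. Hence $\|U_0(t)\|\le C$ for $t\ge0$, uniformly in $h$. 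For negative times Gronwall still applies, since $B_0$ is bounded; this gives the factor $e^{C|t|}$. This step contains no $h$ at all, and density of $Y$ extends the bound to all of $\mathcal H_{\rm v}$.

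\textbf{The case of general $s$.} Set $\bm v=\Lambda_h^s\bm W$ with $\Lambda_h^s=\langle hD\rangle^s$ acting componentwise, and write the equation in the semiclassical form \eqref{eq:semiclassical-system}, multiplied by $A_0$:
\begin{equation*}
A_0\partial_t\bm W+\frac{\ii}{h}\Op_h(\widetilde\Lambda)\bm W+B_0\bm W=0.
\end{equation*}
Since $\widetilde\Lambda(\bm\xi)$ has constant coefficients, $\Op_h(\widetilde\Lambda)=\frac h\ii\Lambda$ commutes exactly with $\Lambda_h^s$. The only commutators that appear are $[\Lambda_h^s,A_0]$ and $[\Lambda_h^s,B_0]$. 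By the symbol calculus \eqref{eq:symbol-product}, $[\Lambda_h^s,A_0]\Lambda_h^{-s}=h\Op_h(a_{s,h})$ with $a_{s,h}$ of order $-1$. Multiplying by $\frac{1}{h}$ is harmless here because this commutator hits $\partial_t\bm W$, not a $\frac1h$ term. So $\bm v$ satisfies
\begin{equation*}
(A_0+hE_1)\partial_t\bm v+\Lambda\bm v+E_0\bm v=0,
\end{equation*}
with $E_0,E_1$ uniformly bounded on $L^2$.

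This is the main obstacle: the perturbed time coefficient $A_0+hE_1$ need not be symmetric, so the equation is not directly of the symmetric form used above. I would avoid it by commuting in the other order. Apply $\Lambda_h^s$ to $A_0^{-1}\Lambda\bm W$, using the first-order form $\partial_t\bm W=-A_0^{-1}\Lambda\bm W-A_0^{-1}B_0\bm W$. The commutator $[\Lambda_h^s,A_0^{-1}]\Lambda$ then equals $h\Op_h(c)\cdot\frac{\ii}{h}\Op_h(\widetilde\Lambda)$ with $c$ of order $s-1$. After conjugating by $\Lambda_h^{-s}$ it is an order-zero operator bounded uniformly in $h$, exactly as $r_{s,h}$ in Lemma~\ref{lem:scalar-energy-estimate}. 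Thus $\partial_t\bm v=-A_0^{-1}\Lambda\bm v+\Op_h(c_{s,h})\bm v$ with $c_{s,h}$ uniformly of order $0$.

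Repeating the weighted energy computation for $\bm v$, skew-adjointness again removes the principal part. The remainder is bounded by $C\|\bm v\|_{A_0}^2$, and Gronwall gives $\|\bm W(t)\|_{H_h^s}\le C_{s,T_0}\|\bm w_0\|_{H_h^s}$ for $|t|\le T_0$, uniformly for small $h$. Negative $s$ is handled identically, since $\Lambda_h^s$ is an elliptic semiclassical operator for every real $s$.
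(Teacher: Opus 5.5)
Your proposal is correct and follows the paper's route: the paper proves this lemma by the same method as Lemma~\ref{lem:scalar-energy-estimate}, namely an energy estimate with Gronwall at $s=0$, then conjugation by $\langle hD\rangle^s$ with the commutators absorbed as uniformly order-zero terms. Your weighted inner product $(\cdot,\cdot)_{A_0}$, together with commuting $\Lambda_h^s$ through $A_0^{-1}\Lambda$ in the first-order form, is the right way to carry this out for the vector system. In the flat $L^2$ pairing the principal symbol $A_0^{-1}\widetilde\Lambda$ is not Hermitian, so the $h^{-1}$ term would not drop out, and the scalar argument does not transfer verbatim.
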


The scalar propagator on the positive branch $S_h(t)$ relates to Maxwell-Debye propagator $U_0(t)$ in \eqref{eq:unpert} via the following theorem.
\begin{theorem}
Microlocally on the flow tube $K_{T,\delta_T}$ and uniformly for $t$ in a fixed compact interval,
\begin{align}
    U_0(t)J_h&=J_hS_h(t)+\mathcal R_h^{\infty,+}(t),
    \label{eq:intertwining-operator-remainder}\\
    J_h^{\#}U_0(t)&=S_h(t)J_h^{\#}+\widetilde{\mathcal R}_h^{\infty,+}(t),
    \label{eq:left-intertwining-operator-remainder}
\end{align}
where $\mathcal R_h^{\infty,+}$ and $\widetilde{\mathcal R}_h^{\infty,+}$ are negligible. Furthermore,
\begin{equation}\label{eq:compressed-intertwining-remainder}
    J_h^{\#}U_0(t)J_h=S_h(t)+\mathcal R_h^{\infty,{\rm c}}(t),
\end{equation}
with $\mathcal R_h^{\infty,{\rm c}}$ negligible.
\end{theorem}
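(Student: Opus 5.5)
The plan is a Duhamel comparison argument, driven by the infinitesimal intertwining relations \eqref{eq:intertwine-relation}--\eqref{eq:left-intertwine-relation} and closed with the uniform energy bounds of Lemmas~\ref{lem:scalar-energy-estimate} and \ref{lem:background-energy-estimate}. Write $G:=\frac{\ii}{h}\Op_h(\mathsf M_{\rm md})+\Op_h(\mathsf C_{\rm md})$, so that $U_0(t)=e^{-tG}$ solves $(\partial_t+G)U_0=0$, and write $P_h:=\frac{\ii}{h}\Op_h(\omega)+\Op_h(\beta_h)$, so that $(\partial_t+P_h)S_h=0$. The time-independent part of \eqref{eq:intertwine-relation} says $GJ_h=J_hP_h+R_h^{\infty,+}$, microlocally on $K_{T,\delta_T}$.

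For \eqref{eq:intertwining-operator-remainder}, I will differentiate the interpolating family $F(r):=U_0(t-r)J_hS_h(r)$ for $0\le r\le t$ and apply it to $u_0\in\mathcal S$:
\[
F'(r)=U_0(t-r)\bigl(GJ_h-J_hP_h\bigr)S_h(r)=U_0(t-r)R_h^{\infty,+}S_h(r).
\]
Integrating from $0$ to $t$ gives
\[
J_hS_h(t)-U_0(t)J_h=\int_0^tU_0(t-r)R_h^{\infty,+}S_h(r)\dd r .
\]
Lemma~\ref{lem:background-energy-estimate} makes $U_0(t-r)$ uniformly bounded on every $H_h^s$, and Lemma~\ref{lem:scalar-energy-estimate} does the same for $S_h(r)$. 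The remainder $R_h^{\infty,+}$ maps $H_h^{-N}\to H_h^N$ with norm $O(h^N)$, so the integral is negligible uniformly for $t$ in a compact interval. The argument for \eqref{eq:left-intertwining-operator-remainder} is symmetric: use $\widetilde F(r):=S_h(t-r)J_h^{\#}U_0(r)$ together with \eqref{eq:left-intertwine-relation}. Finally, \eqref{eq:compressed-intertwining-remainder} follows by applying $J_h^{\#}$ to \eqref{eq:intertwining-operator-remainder} and using \eqref{eq:Jh-identities}:
\[
J_h^{\#}U_0J_h=J_h^{\#}J_hS_h+J_h^{\#}\mathcal R_h^{\infty,+}=S_h+R_h^{\infty,{\rm n}}S_h+J_h^{\#}\mathcal R_h^{\infty,+},
\]
and both correction terms are negligible, again by Lemma~\ref{lem:scalar-energy-estimate}.

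The main obstacle is making ``microlocally on $K_{T,\delta_T}$'' precise. The symbols $\bm r_h$, $\bm\ell_h$ and $\beta_h$ exist only on $\mathcal U$, where $\omega$ is a simple eigenvalue. So \eqref{eq:intertwine-relation} holds only after composing with a cutoff $\chi\in C_c^\infty$ with $\chi=1$ near $K_{T,\delta_T}$ (say, with the symbols extended smoothly outside), and there the error is no longer $O(h^\infty)$ globally.

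My approach to this localization is as follows. Take $u_0$ with $\mathrm{WF}_h(u_0)\subset K$. By Egorov's theorem for the scalar equation, whose principal part has real symbol $\omega$, the set $\mathrm{WF}_h(S_h(r)u_0)$ is contained in $\bm\Phi^r(K)\subset K_{T,\delta_T}$ for $|r|\le T$, uniformly in $r$. Hence the term $(GJ_h-J_hP_h)S_h(r)u_0$ equals $(GJ_h-J_hP_h)\chi^wS_h(r)u_0+O(h^\infty)$, and the localized error is negligible. The theorem should therefore be stated with the remainders composed on the right with $\Op_h(\chi_K)$, where $\chi_K=1$ near $K$ and has support in a slightly smaller tube. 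For the left relation, use propagation of singularities for $U_0$ instead. The system is symmetric hyperbolic, and the positive mode is separated from the others on the tube, so a wavefront set in $\mathcal U$ propagates along $\bm\Phi^r$ on that branch. Equivalently, one can run the argument on $J_h\Op_h(\chi_K)$ and appeal to the already-established right relation.
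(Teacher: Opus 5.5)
Your proposal is correct and follows the paper's own proof. The paper also compares the two evolutions via Duhamel to get $U_0(t)J_h-J_hS_h(t)=-\int_0^tU_0(t-s)R_h^{\infty,+}S_h(s)\dd s$, bounds this using Lemmas~\ref{lem:scalar-energy-estimate} and \ref{lem:background-energy-estimate} and the negligibility of $R_h^{\infty,+}$, treats the left relation the same way, and composes with \eqref{eq:Jh-identities} to obtain the compressed identity. Your additional discussion of how to make ``microlocally on $K_{T,\delta_T}$'' precise, via input cutoffs and propagation of wavefront sets for $S_h$ and $U_0$, is a sound refinement. The paper handles that point only through the cutoff convention in Appendix~\ref{sec:semi-convention}.
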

\begin{proof}
Equations \eqref{eq:intertwine-relation} and \eqref{eq:positive-scalar-propagator}, followed by Duhamel's formula \eqref{eq:duhamel}, give
\begin{equation*}
U_0(t)J_h-J_hS_h(t)=-\int_0^tU_0(t-s)R_h^{\infty,+}S_h(s)\dd s.
\end{equation*}
Combining the energy estimations in Lemma \ref{lem:scalar-energy-estimate}, Lemma \ref{lem:background-energy-estimate} and negligible nature of $R_h^{\infty, +}$ gives the desired result for $\mathcal R_h^{\infty,+}$. This proves \eqref{eq:intertwining-operator-remainder}. The left relation follows in the same way from \eqref{eq:left-intertwine-relation}. Composing either relation with the normalization in \eqref{eq:Jh-identities} proves \eqref{eq:compressed-intertwining-remainder}.
\end{proof}

\subsubsection{Characterization of scalar propagator.}
$S_h(t)$ consists of two parts: the lossless transport part generated by $\omega$ and the decaying part generated by $\beta_h$. We first isolate the lossless transport. Let $F_h(t)$ be the scalar propagator on $\mathcal H_{\rm s}$ generated by $\Op_h(\omega)$ with domain $H_h^1(\mathbb R^2)$:
\begin{equation}\label{eq:lossless-propagator}
    \left(\partial_t+\frac{\ii}{h}\Op_h(\omega)\right)F_h(t)=0,
    \qquad F_h(0)=I.
\end{equation}
For $t$ in a fixed compact interval and after the fixed microlocal cutoffs, $F_h(t)$ is an order-zero propagating operator along $\bm\Phi^t$ and uniformly bounded on $H_h^s(\mathbb R^2)$ for every fixed $s\in\mathbb R$.

\begin{theorem}\label{thm:egorov}
Let $a_h$ be a compactly supported symbol of order $0$ in $\mathcal U$, and assume that its support remains compactly contained in $\mathcal U$ under the flow for the times under consideration. Then
\begin{equation}\label{eq:egorov-with-class}
    F_h(-t)\Op_h(a_h)F_h(t)=\Op_h(a_h\circ\bm\Phi^t)+hR_{a,h}(t),
\end{equation}
where $R_{a,h}(t)$ is a uniformly bounded order-zero semiclassical pseudodifferential operator satisfying
\begin{equation*}
 \sup_t\|hR_{a,h}(t)\|_{\mathcal H_{\rm s}\to\mathcal H_{\rm s}}\le Ch.
\end{equation*}
A full symbol of $R_{a,h}(t)$ can be chosen compactly supported and of order $0$, with each finite derivative bound controlled by finitely many derivative bounds for $a_h$.
\end{theorem}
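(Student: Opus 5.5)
The plan is the standard Egorov argument. I would differentiate the conjugated operator in time and compare it with the quantization of the transported symbol, then control the defect by Duhamel's formula and the unitarity-type bounds for $F_h$.

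\textbf{Step 1: the exact defect equation.} Set $A(t):=F_h(-t)\Op_h(a_h)F_h(t)$ and $P:=\Op_h(\omega)$. By \eqref{eq:lossless-propagator},
\begin{equation*}
\partial_tA(t)=\frac{\ii}{h}F_h(-t)[P,\Op_h(a_h)]F_h(t).
\end{equation*}
For the candidate $B(t):=\Op_h(a_h\circ\bm\Phi^t)$, the flow equation gives
\begin{equation*}
\partial_t(a_h\circ\bm\Phi^t)=\{\omega,a_h\circ\bm\Phi^t\}=\bm H_\omega(a_h\circ\bm\Phi^t).
\end{equation*}
The symbol product \eqref{eq:symbol-product}, applied to $\omega\#b-b\#\omega$, yields
\begin{equation*}
\frac{\ii}{h}[P,\Op_h(b)]=\Op_h(\{\omega,b\})+h\Op_h(e_b),
\end{equation*}
where $e_b$ is of order $0$. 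Its derivative bounds are controlled by finitely many derivatives of $b$ and $\omega$ on a neighbourhood of $\supp b$.

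Two technical points are needed here. First, since $\omega$ is of order one and not smooth at $\bm\xi=0$, I would replace $\omega$ by $\chi\omega$, where $\chi\in C_c^\infty(\mathcal U)$ equals $1$ near the flow-out of $\supp a_h$. The error $\Op_h((1-\chi)\omega)\Op_h(b)$ is $O(h^\infty)$ by disjoint supports. Second, $a_h\circ\bm\Phi^t$ stays in a fixed compact subset of $\mathcal U$, uniformly in $t$, by the hypothesis on the support. Its derivatives are bounded in terms of derivatives of $a_h$ and of the flow map $\bm\Phi^t$ on the compact time interval.

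Combining the two identities gives
\begin{equation*}
\partial_tB(t)-\frac{\ii}{h}[P,B(t)]=-h\Op_h(e(t)),
\end{equation*}
with $e(t)$ compactly supported, of order $0$, and uniformly bounded in $t$.

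\textbf{Step 2: Duhamel.} From Step 1,
\begin{equation*}
\frac{\dd}{\dd s}\Bigl(F_h(-s)B(t-s)F_h(s)\Bigr)=hF_h(-s)\Op_h(e(t-s))F_h(s).
\end{equation*}
Integrating from $0$ to $t$ gives
\begin{equation*}
A(t)-B(t)=h\int_0^tF_h(-s)\Op_h(e(t-s))F_h(s)\dd s=:hR_{a,h}(t).
\end{equation*}
Since $F_h$ is uniformly bounded on $\mathcal H_{\rm s}$ (as in Lemma~\ref{lem:scalar-energy-estimate} with $\beta_h=0$), this immediately gives $\sup_t\|hR_{a,h}(t)\|\le Ch$.

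\textbf{Step 3: pseudodifferential structure of $R_{a,h}$.} This is the main obstacle, since the norm bound alone does not show that $R_{a,h}$ is a pseudodifferential operator. I would iterate the argument: apply Steps 1–2 to each $\Op_h(e(t-s))$, which gives
\begin{equation*}
F_h(-s)\Op_h(e)F_h(s)=\Op_h(e\circ\bm\Phi^s)+hR'.
\end{equation*}
Repeating this $N$ times produces
\begin{equation*}
R_{a,h}(t)=\Op_h(r_0+hr_1+\dots+h^{N-1}r_{N-1})+h^N\mathcal E_N,
\end{equation*}
where each $r_j$ is a compactly supported symbol obtained by integrating transported symbols over $s$, and $\|\mathcal E_N\|=O(1)$. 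Borel summation of the $r_j$ then gives a full compactly supported order-$0$ symbol. The remaining error is $O(h^\infty)$, and I would show it is smoothing by applying the same argument with $\Lambda_h^{s}$ conjugation, as in Lemma~\ref{lem:scalar-energy-estimate}. Careful bookkeeping of the derivative count at each iteration gives the final claim, that each seminorm of $R_{a,h}$ is controlled by finitely many seminorms of $a_h$.
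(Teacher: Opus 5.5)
Your proposal is correct and follows essentially the same route as the paper. Both arguments compare $F_h(-t)\Op_h(a_h)F_h(t)$ with $\Op_h(a_h\circ\bm\Phi^t)$ through the commutator identity from \eqref{eq:symbol-product}, then integrate the defect by Duhamel using the uniform bounds on $F_h$.

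Your Step 3 is more careful than the paper. The paper simply asserts that the Duhamel integral is an order-zero pseudodifferential operator because $F_h$ and $R_h^{\rm Eg}$ are, and that inference does not hold as stated, since $F_h(s)$ is a Fourier integral operator quantizing $\bm\Phi^s$. Your iteration with Borel summation, together with the $H_h^s$ bounds on the remainder, is the right way to justify the structural claim.
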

\begin{proof}
Since the generator in \eqref{eq:lossless-propagator} is time independent, $F_h(t)=\exp(-\ii t\Op_h(\omega)/h)$ and it commutes with $\Op_h(\omega)$. Set
\begin{equation*}
    \mathsf B_h^{\rm Eg}(t):=F_h(-t)\Op_h(a_h)F_h(t), \qquad \mathsf C_h^{\rm Eg}(t):=\Op_h(a_h\circ\bm\Phi^t).
\end{equation*}
Then $\mathsf B_h^{\rm Eg}(0)=\mathsf C_h^{\rm Eg}(0)=\Op_h(a_h)$ and
\begin{equation*}
    \partial_t\mathsf B_h^{\rm Eg}(t)=\frac{\ii}{h}[\Op_h(\omega),\mathsf B_h^{\rm Eg}(t)], \qquad \partial_t\mathsf C_h^{\rm Eg}(t)=\Op_h\bigl(\bm H_\omega(a_h\circ\bm\Phi^t)\bigr).
\end{equation*}
From \eqref{eq:symbol-product}, we have
\begin{align*}
    \omega\#(a_h\circ\bm{\Phi}^t)-(a_h\circ\bm{\Phi}^t)\#\omega&=\frac{h}{\ii}\sum_j (\partial_{\xi_j}\omega)(\partial_{x_j}(a_h\circ\bm{\Phi}^t))-(\partial_{x_j}\omega)(\partial_{\xi_j}(a_h\circ\bm{\Phi}^t))+h^2r_h \\
    &= \frac{h}{\ii}\bm H_\omega(a_h\circ\bm{\Phi}^t)+h^2r_h.
\end{align*}

Hence there exists a uniformly bounded order-zero semiclassical pseudodifferential operator $R_h^{\rm Eg}(t)$ such that
\begin{equation*}
    \partial_t\mathsf C_h^{\rm Eg}(t)=\frac{\ii}{h}[\Op_h(\omega),\mathsf C_h^{\rm Eg}(t)]+hR_h^{\rm Eg}(t).
\end{equation*}

Thus $\mathsf D_h^{\rm Eg}:=\mathsf B_h^{\rm Eg}-\mathsf C_h^{\rm Eg}$ satisfies
\begin{equation*}
    \partial_t\mathsf D_h^{\rm Eg}=\frac{\ii}{h}[\Op_h(\omega),\mathsf D_h^{\rm Eg}(t)]-hR_h^{\rm Eg}(t), \qquad \mathsf D_h^{\rm Eg}(0)=0.
\end{equation*}

Moreover, since $F_h$ commutes with $\Op_h(\omega)$, 
\begin{align*}
    &\partial_t(F_h(t)\mathsf D_h^{\rm Eg}(t)F_h(-t)) \\ =& (\partial_tF_h(t))\mathsf D_h^{\rm Eg}(t)F_h(-t) + F_h(t)(\partial_t\mathsf D_h^{\rm Eg}(t))F_h(-t) + F_h(t)\mathsf D_h^{\rm Eg}(t)\partial_tF_h(-t) \\
    =& (-\frac{\ii}{h}\Op_h(\omega)F_h(t))\mathsf D_h^{\rm Eg}(t)F_h(-t) +F_h(t)\left(\frac{\ii}{h}[\Op_h(\omega), \mathsf D_h^{\rm Eg}(t)]-hR_h^{\rm Eg}(t)\right)F_h(-t) \\ &+ F_h(t)\mathsf D_h^{\rm Eg}(t)\frac{\ii}{h}\Op_h(\omega)F_h(-t) = -hF_h(t)R_h^{\rm Eg}(t)F_h(-t).
\end{align*}

Since $\mathsf D_h^{\rm Eg}(0)=0$,   
\begin{equation*}
    F_h(t)\mathsf D_h^{\rm Eg}(t)F_h(-t)=-h\int_0^tF_h(s)R_h^{\rm Eg}(s)F_h(-s)\dd s.
\end{equation*}
$R_h^{\rm Eg}(t)$ and $F_h(t)$ are uniformly bounded order-zero semiclassical pseudodifferential operators, so the integral, and hence $h^{-1}\mathsf D_h^{\rm Eg}(t)$, is a uniformly bounded order-zero semiclassical pseudodifferential operator. This proves \eqref{eq:egorov-with-class}.
\end{proof}

Now we study the leading amplitude transport on the positive branch introduced by $\beta_h$. Its principal coefficient $\beta=\gamma+\kappa$ contains both the physical material attenuation $\gamma$ and the geometric transport correction $\kappa$. According to discussions in section \ref{sec:injection-symbol} and \ref{sec:extraction-symbol}, the leading order part is described by
\begin{equation}\label{eq:damping-integral}
    \mathcal G_t(\bm{\rho}) := \int_0^t \beta(\bm{\Phi}^{-s}(\bm{\rho}))\dd s = \int_0^t\bigl(\gamma+\kappa\bigr)(\bm{\Phi}^{-s}(\bm{\rho}))\dd s.
\end{equation}

\begin{theorem}[Positive-branch damping factor]
For $0\le t\le T+\delta_T$, the functions $\mathcal G_t$ and $e^{-\mathcal G_t}$ are local symbols of order $0$ on the selected flow tube, uniformly in $t$. After the fixed microlocal cutoffs,
\begin{equation}\label{eq:damped-factorization}
    S_h(t)=\Op_h(e^{-\mathcal G_t})F_h(t)+h\mathcal R_h^{\rm damp}(t),
\end{equation}
where $\mathcal R_h^{\rm damp}(t)$ is an order-zero propagating operator along $\bm\Phi^t$, with uniform amplitude bounds. 
\end{theorem}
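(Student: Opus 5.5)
The proof will be an interaction-picture argument: conjugate $S_h(t)$ by the lossless propagator $F_h(t)$, so that only the order-zero damping generator remains, and then solve the resulting transport equation at the symbolic level.

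\emph{Symbol properties of $\mathcal G_t$.} Since $K_{T,\delta_T}\Subset\mathcal U$, the flow $\bm\Phi^{-s}$ for $|s|\le T+\delta_T$ is a smooth family of homogeneous canonical maps on the tube. The function $\beta=\gamma+\kappa$ is of order $0$ there, so $\beta\circ\bm\Phi^{-s}$ is a local symbol of order $0$ with bounds uniform in $s$. Integrating in $s$ shows that $\mathcal G_t$ is of order $0$ uniformly in $t$. Composing with $\exp$ preserves the symbol class by the chain rule, so $e^{-\mathcal G_t}$ is also of order $0$. I would multiply both by a fixed cutoff $\chi$ that equals $1$ near the relevant part of the tube.

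\emph{Reduction to an order-zero equation.} Set
\[
  V_h(t):=S_h(t)F_h(-t).
\]
Using \eqref{eq:positive-scalar-propagator}, \eqref{eq:lossless-propagator}, and the fact that $F_h$ commutes with $\Op_h(\omega)$, one gets
\[
  \partial_tV_h=-\frac{\ii}{h}[\Op_h(\omega),V_h]-\Op_h(\beta_h)V_h .
\]
Equivalently, $W_h(t):=F_h(-t)S_h(t)$ satisfies
\[
  \partial_tW_h=-F_h(-t)\Op_h(\beta_h)F_h(t)\,W_h .
\]
By Theorem~\ref{thm:egorov},
\[
  F_h(-t)\Op_h(\beta_h)F_h(t)=\Op_h(\beta\circ\bm\Phi^t)+hR(t).
\]
So $W_h$ solves, modulo $O(h)$, the equation $\partial_tW=-\Op_h(\beta\circ\bm\Phi^t)W$ with $W(0)=I$.

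\emph{Symbolic comparison.} The candidate is $\widetilde W(t)=\Op_h(e^{-\mathcal G'_t})$, where
\[
  \mathcal G'_t:=\int_0^t\beta\circ\bm\Phi^s\,\dd s .
\]
Its time derivative is $-\Op_h\bigl((\beta\circ\bm\Phi^t)e^{-\mathcal G'_t}\bigr)$. By \eqref{eq:symbol-product}, this equals $-\Op_h(\beta\circ\bm\Phi^t)\widetilde W(t)$ up to $h$ times a uniformly bounded order-zero pseudodifferential operator. Subtracting the two equations, using the Duhamel formula, and applying Gronwall with the $L^2$ bounds of Lemma~\ref{lem:scalar-energy-estimate} gives
\[
  W_h(t)=\Op_h(e^{-\mathcal G'_t})+O_{\mathcal H_{\rm s}\to\mathcal H_{\rm s}}(h),
\]
and the error keeps a pseudodifferential structure.

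\emph{Conclusion.} We have $S_h(t)=F_h(t)\Op_h(e^{-\mathcal G'_t})+hF_h(t)(\cdots)$. Applying Theorem~\ref{thm:egorov} with $-t$ in place of $t$ gives
\[
  F_h(t)\Op_h(e^{-\mathcal G'_t})=\Op_h(e^{-\mathcal G'_t}\circ\bm\Phi^{-t})F_h(t)+O(h).
\]
A change of variables yields
\[
  \mathcal G'_t\circ\bm\Phi^{-t}=\int_0^t\beta\circ\bm\Phi^{s-t}\,\dd s=\mathcal G_t,
\]
which is exactly \eqref{eq:damping-integral}. This gives \eqref{eq:damped-factorization}. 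The remainder has the form $h\,F_h(t)P_h(t)$ or $h\,P_h(t)F_h(t)$ with $P_h$ an order-zero pseudodifferential operator, so it is an order-zero propagating operator along $\bm\Phi^t$.

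\emph{Main obstacle.} The hardest step is justifying that the Duhamel and Gronwall error stays in the class of pseudodifferential operators composed with $F_h$, with uniform amplitude bounds, rather than being merely bounded in $\mathcal H_{\rm s}$. Also, the microlocal cutoffs must survive the flow, which uses the support condition of Theorem~\ref{thm:egorov} on $K_{T,\delta_T}$. I would handle this by solving the transport equation for the full symbol iteratively, correcting by $h$-order terms at each step, so that only an $O(h^\infty)$ remainder needs the energy estimate.
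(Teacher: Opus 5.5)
Your argument is correct, but it takes a different route from the paper.

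\textbf{The paper's route.} The paper never conjugates. It takes the ansatz $\Op_h(e^{-\mathcal G_t})F_h(t)$ directly and checks the pointwise transport identity $(\partial_t+\bm H_\omega+\beta)e^{-\mathcal G_t}=0$, using $\partial_t\mathcal G_t=\beta\circ\bm\Phi^{-t}$ and $\bm H_\omega\mathcal G_t=\beta-\beta\circ\bm\Phi^{-t}$. A single commutator expansion $\frac{\ii}{h}[\Op_h(\omega),\Op_h(a)]=\Op_h(\bm H_\omega a)+O(h)$ then shows the ansatz solves the $S_h$ equation up to $h\mathcal E_h(t)$. One Duhamel step against $S_h$ closes the argument, and no Egorov theorem is needed.

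\textbf{Your route.} You pass to the interaction picture. Theorem~\ref{thm:egorov} reduces the dynamics to an order-zero evolution whose principal part is solved by an ODE along the flow. You then need a second, backward-time application of Egorov, with a $t$-dependent symbol and with $\beta_h$ cut off so the theorem applies, to move the amplitude across $F_h(t)$. Your relabeling $\mathcal G_t^{\rm out}\circ\bm\Phi^{-t}=\mathcal G_t$ is correct. The extra cost is these two Egorov applications, each adding its own $O(h)$ error.

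\textbf{What your route buys.} Your intermediate statement $F_h(-t)S_h(t)=\Op_h(e^{-\mathcal G_t^{\rm out}})+O(h)$ is exactly \eqref{eq:interaction-propagator}. The paper derives that identity only later, as a corollary of this theorem plus Egorov. Your argument produces it first and obtains the theorem from it.

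\textbf{The obstacle you flag.} The structural issue you call the main obstacle is shared by the paper. Its remainder $-\int_0^tS_h(t-s)\mathcal E_h(s)\dd s$ is only shown to be uniformly bounded on $\mathcal H_{\rm s}$, not to be an order-zero propagating operator. Likewise, your Gronwall step alone gives only an $O(h)$ bounded error, not the form $hP_h(t)F_h(t)$ you assert. Your proposed fix is the standard way to close this: solve the transport equations to all orders, so that only an $O(h^\infty)$ error meets the energy estimate. It is slightly cleaner in your picture, because the generator $F_h(-t)\Op_h(\beta_h)F_h(t)$ is an order-zero pseudodifferential operator. The iteration therefore shows $F_h(-t)S_h(t)$ is itself pseudodifferential modulo $O(h^\infty)$, and $S_h(t)$ inherits the propagating structure from $F_h(t)$. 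The same iteration would work equally well on the paper's ansatz.
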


\begin{proof}
From \eqref{eq:damping-integral},
$\partial_t\mathcal G_t(\bm\rho)=\beta(\bm\Phi^{-t}(\bm\rho))$. Moreover,
\begin{align*}
\bm H_\omega\mathcal G_t(\bm\rho)=\frac{\dd}{\dd\varepsilon}\bigg|_{\varepsilon=0}\int_0^t\beta(\bm\Phi^{\varepsilon-s}(\bm\rho))\dd s=\beta(\bm\rho)-\beta(\bm\Phi^{-t}(\bm\rho)).
\end{align*}
Thus
\begin{equation}\label{eq:proof-1}
 (\partial_t+\bm H_\omega)e^{-\mathcal G_t}
 +\beta e^{-\mathcal G_t}=0.
\end{equation}
All derivatives of $\mathcal G_t$ and $e^{-\mathcal G_t}$ are bounded on the compact flow tube because $\beta$ is a local symbol of order $0$ and the flow is smooth for bounded time.

\eqref{eq:symbol-product} gives uniformly bounded order-zero semiclassical pseudodifferential operators $R_h^{\rm com}(t)$ and $R_h^{\rm prod}(t)$ such that
\begin{align*}
 \frac{\ii}{h}[\Op_h(\omega),\Op_h(e^{-\mathcal G_t})]&=\Op_h(\bm H_\omega e^{-\mathcal G_t})+hR_h^{\rm com}(t),\\
 \Op_h(\beta_h)\Op_h(e^{-\mathcal G_t})&=\Op_h(\beta e^{-\mathcal G_t})+hR_h^{\rm prod}(t).
\end{align*}
Using \eqref{eq:lossless-propagator} and \eqref{eq:proof-1}, it follows that
\begin{equation}\label{eq:residual-bound}
 \left(\partial_t+\frac{\ii}{h}\Op_h(\omega)+\Op_h(\beta_h)\right)
 \Op_h(e^{-\mathcal G_t})F_h(t)
 =h\mathcal E_h(t),
\end{equation}
where $\mathcal E_h(t)$ is an order-zero propagating operator along $\bm\Phi^t$, with amplitudes uniformly bounded for $t\in[0,T+\delta_T]$.

Notice that $S_h(t)$ and $\Op_h(e^{-\mathcal G_t})F_h(t)$ have the same initial value $I$ at $t=0$. Duhamel's formula \eqref{eq:duhamel} applied to \eqref{eq:residual-bound} gives
\begin{equation*}
 S_h(t)-\Op_h(e^{-\mathcal G_t})F_h(t)=-h\int_0^tS_h(t-s)\mathcal E_h(s)\dd s.
\end{equation*}
Define
$$
    \mathcal R_h^{\rm damp}(t):=-\int_0^t S_h(t-s)\mathcal E_h(s)\dd s.
$$
The uniform boundedness of $\mathcal R_h^{\rm damp}$ follows from the uniform boundedness of $S_h$ and $\mathcal E_h$. This proves the theorem.
\end{proof}

\subsection{The returned field.}

The returned field contains multiplication by $\Gamma$, which changes the positive polarization into the negative TM polarization and reverses the sign of the principal temporal frequency. 

\subsubsection{The negative branch.}
We repeat the same construction for the eigenvalue $-\omega$ in \eqref{eq:leadingorder}. Its principal right and left symbols are
\begin{equation*}
 \bm r_-(\bm x,\bm\xi):=\Gamma\bm r(\bm x,\bm\xi) = \bm r(\bm x,-\bm\xi),
 \qquad
 \bm\ell_-(\bm x,\bm\xi):=\bm r_-(\bm x,\bm\xi)^*A_0(\bm x) = \bm\ell(\bm x,\bm\xi)\Gamma.
\end{equation*}
Both symbols have order $0$. The relations
$\Gamma \mathsf M_{\rm md}(\bm x,\bm\xi)\Gamma=-\mathsf M_{\rm md}(\bm x,\bm\xi)$,
$\bm r(\bm x,-\bm\xi)=\Gamma\bm r(\bm x,\bm\xi)$, and the evenness of $\omega$ imply
$$
 \bm{\mathcal D}(\bm x,-\bm\xi)=-\Gamma\bm{\mathcal D}(\bm x,\bm\xi),
 \qquad
 \kappa(\bm x,-\bm\xi)=-\kappa(\bm x,\bm\xi).
$$
Therefore the leading order-zero scalar coefficient on the negative branch is
\begin{equation*}
 \beta_-(\bm x,\bm\xi):=\gamma(\bm x,\bm\xi)-\kappa(\bm x,\bm\xi)=\beta(\bm x,-\bm\xi).
\end{equation*}
This formula exhibits the different behavior of the material and geometric contributions under reversal of the propagating branch. The material attenuation $\gamma$ is unchanged, since in the present Maxwell--Debye model it is determined by the dissipative material coefficients and is independent of the sign of $\bm\xi$. Nevertheless, the geometric transport correction is odd,
$$
    \kappa(\bm x,-\bm\xi)=-\kappa(\bm x,\bm\xi).
$$
Consequently the leading amplitude-transport coefficients on the outgoing positive and returning negative branches are
\begin{equation*}
    \beta_+=\gamma+\kappa, \qquad \beta_-=\gamma-\kappa.
\end{equation*}
Thus the geometric corrections occur with opposite signs on the two branches, whereas the physical attenuation has the same sign on both. This distinction will be responsible for the cancellation of geometric transport, but the addition of material loss, in the round-trip returned symbol. 

Repeating the steps in section \ref{sec:injection-symbol} and \ref{sec:extraction-symbol} gives classical order-zero symbols
\begin{equation}\label{eq:full-negative-mode-symbols}
 \begin{aligned}
 \bm r_{-,h}&=\bm r_-+h\widetilde{\bm r}_{-,h},\\
 \bm\ell_{-,h}&=\bm\ell_-+h\widetilde{\bm\ell}_{-,h},\\
 \beta_{-,h}&=\beta_-+h\widetilde\beta_{-,h},
 \end{aligned}
\end{equation}
where the three correction symbols are uniformly of order $-1$. With
\begin{equation*}
 J_{h,-}:=\Op_h(\bm r_{-,h}), \qquad J_{h,-}^{\#}:=\Op_h(\bm\ell_{-,h}),
\end{equation*}
one has
\begin{align}
 \left(\partial_t+\frac{\ii}{h}\Op_h(\mathsf M_{\rm md})+\Op_h(\mathsf C_{\rm md})\right)J_{h,-}
 &=J_{h,-}\left(\partial_t-\frac{\ii}{h}\Op_h(\omega)+\Op_h(\beta_{-,h})\right)
 +R_h^{\infty,-},
 \label{eq:negative-intertwine-relation}\\
 J_{h,-}^{\#}\left(\partial_t+\frac{\ii}{h}\Op_h(\mathsf M_{\rm md})+\Op_h(\mathsf C_{\rm md})\right)
 &=\left(\partial_t-\frac{\ii}{h}\Op_h(\omega)+\Op_h(\beta_{-,h})\right)
 J_{h,-}^{\#}+\widetilde R_h^{\infty,-}.
 \label{eq:negative-left-intertwine-relation}
\end{align}
Writing $\Pi_-:=\bm r_-\bm\ell_-$, which is a local matrix symbol of order $0$, the normalization may be imposed simultaneously in the form
\begin{equation*}
 J_{h,-}^{\#}J_{h,-}=I+R_h^{\infty,{\rm n},-}, \qquad J_{h,-}J_{h,-}^{\#}=\Op_h(\Pi_-)+hR_h^{{\rm p},-},
\end{equation*}
where $R_h^{{\rm p},-}$ is a uniformly bounded order $-1$ semiclassical pseudodifferential operator and $R_h^{\infty,-}, \widetilde R_h^{\infty,-}, R_h^{\infty,{\rm n},-}$ are negligible.

Since $\Gamma\bm r=\bm r_-$, the corrected injections satisfy only the order-$h$ relation
\begin{equation*}
 \Gamma J_h-J_{h,-}=hR_h^\Gamma.
\end{equation*}
Here $R_h^\Gamma$ is a uniformly bounded order $-1$ semiclassical pseudodifferential operator from scalar to vector fields.

Let $S_{-,h}(t)$ be the strongly continuous propagator on $\mathcal H_{\rm s}$, with domain $H_h^1(\mathbb R^2)$, defined by
\begin{equation}\label{eq:negative-propagator}
\left(\partial_t-\frac\ii h\Op_h(\omega)+\Op_h(\beta_{-,h})\right)S_{-,h}(t)=0,
\qquad S_{-,h}(0)=I.
\end{equation}
It is uniformly bounded on $H_h^s$ for every fixed $s$ and for $t$ in fixed compact intervals. Duhamel's formula applied to
\eqref{eq:negative-intertwine-relation}--\eqref{eq:negative-left-intertwine-relation} gives, uniformly for $|t|\le T_0$ with fixed $T_0$,
\begin{align}
 U_0(t)J_{h,-}&=J_{h,-}S_{-,h}(t)+\mathcal R_h^{\infty,-}(t)\notag,\\
 J_{h,-}^{\#}U_0(t)&=S_{-,h}(t)J_{h,-}^{\#}+\widetilde{\mathcal R}_h^{\infty,-}(t)
 \label{eq:negative-left-propagator-intertwining}
\end{align}
with $\mathcal R_h^{\infty, -}$ and $\widetilde{\mathcal R}_h^{\infty,-}$ negligible.

Define
\begin{equation*}
 \mathcal G_{t,-}(\bm\rho):=\int_0^t\beta_-(\bm\Phi^s(\bm\rho))\dd s.
\end{equation*}
The same transport argument, now using $\partial_t-\bm H_\omega$, gives an order-zero propagating operator $\mathcal R_{-,h}^{\rm damp}(t)$ along $\bm\Phi^{-t}$, with uniform amplitude bounds, such that
\begin{equation}\label{eq:s-h-factorization}
S_{-,h}(t)=\Op_h(e^{-\mathcal G_{t,-}})F_h(-t)+h\mathcal R_{-,h}^{\rm damp}(t).
\end{equation}
For every fixed $s$ and $T_0$,
\begin{equation*}
 \sup_{|t|\le T_0}\|h\mathcal R_{-,h}^{\rm damp}(t)\|_{H_h^s(\mathbb R^2)\to H_h^s(\mathbb R^2)}\le C_{s,T_0}h.
\end{equation*}

\subsubsection{The returned operator and its principal symbol}

Now we are ready to analyze the returned operator $K_{m,\tau_h}$ defined in \eqref{eq:Kmdef} from semiclassical perspective. By reformulating \eqref{eq:physical-reversed-operator} with positive-mode scalar propagators, We define the scalar operator $\mathcal K_{m,\tau_h}$ corresponding to $K_{m,\tau_h}$. 

\begin{proposition}
Let $m\in C_c^\infty(\mathbb R^2;\mathbb R)$, $M_m$ be multiplication by $m$, and let $K_{m,\tau_h}$ be defined by \eqref{eq:Kmdef}. Define the scalar operator on $\mathcal H_{\rm s}$ by
\begin{equation*}
\mathcal K_{m,\tau_h}:=\frac12\int_{-\tau_h/2}^{\tau_h/2}\eta_{\tau_h}(s)S_{-,h}(T-s)M_mS_h(T+s)\dd s.
\end{equation*}
Microlocally on $K$, after inserting fixed compactly supported input and output cutoffs, there are a compactly supported symbol $r_{m,h}^{\rm br}$ of order $0$ and a negligible remainder $R_{m,h}^{\infty,\rm br}$ such that
\begin{equation}\label{eq:physical-scalar-bridge}
J_{h,-}^{\#}K_{m,\tau_h}J_h=\mathcal K_{m,\tau_h}+h\Op_h(r_{m,h}^{\rm br})+R_{m,h}^{\infty,\rm br}.
\end{equation}
For every $N$ there are an integer $N'=N'(N)$ and a constant $C_N$, independent of $h$ and of $m$ with support in a fixed compact set, such that
\begin{equation}\label{eq:bridge-symbol-bound}
 \|r_{m,h}^{\rm br}\|_{0,N}\le C_N\max_{|\alpha|\le N'}\|\partial_{\bm x}^\alpha m\|_{L^\infty}.
\end{equation}
\end{proposition}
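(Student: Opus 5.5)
The plan is to substitute the definition \eqref{eq:physical-reversed-operator} of $K_{m,\tau_h}$ and move $J_{h,-}^{\#}$ and $J_h$ inside the $s$-integral. This gives
\[
J_{h,-}^{\#}K_{m,\tau_h}J_h=\frac12\int_{-\tau_h/2}^{\tau_h/2}\eta_{\tau_h}(s)\,J_{h,-}^{\#}U_0(T-s)M_m\Gamma U_0(T+s)\,\bm\Upsilon\text{-slot}\,J_h\dd s ,
\]
where the $\bm\Upsilon$-slot is understood as the input $J_hu$. The argument is then run pointwise in $s$, with $|s|\le\tau_h/2=O(h)$. Since $\int\eta_{\tau_h}=O(1)$ in $L^1$, all bounds stay uniform after integration.

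\textbf{Step 1: right and left intertwinings.} On the right, apply \eqref{eq:intertwining-operator-remainder}: $U_0(T+s)J_h=J_hS_h(T+s)+\mathcal R_h^{\infty,+}(T+s)$. The times $T\pm s$ lie in a fixed compact interval, so the remainder is negligible uniformly in $s$. On the left, apply \eqref{eq:negative-left-propagator-intertwining}: $J_{h,-}^{\#}U_0(T-s)=S_{-,h}(T-s)J_{h,-}^{\#}+\widetilde{\mathcal R}_h^{\infty,-}(T-s)$. Both negligible terms are composed with $M_m$, the bounded operator $\Gamma$, and the propagators, which are uniformly bounded on every $H_h^s$ by Lemmas~\ref{lem:scalar-energy-estimate} and~\ref{lem:background-energy-estimate}. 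Their contributions therefore go into $R_{m,h}^{\infty,\rm br}$. This leaves the middle operator
\[
J_{h,-}^{\#}M_m\Gamma J_h .
\]

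\textbf{Step 2: the middle operator.} Use $\Gamma J_h=J_{h,-}+hR_h^\Gamma$ and write $M_m=\Op_h(m)$, a symbol independent of $\bm\xi$. Symbolic calculus gives
\[
J_{h,-}^{\#}M_mJ_{h,-}=\Op_h(\bm\ell_{-,h}\#m\#\bm r_{-,h}).
\]
The principal term of the symbol is $m\,\bm\ell_-\bm r_-=m$. The subprincipal terms involve $\partial_{\bm\xi}\bm\ell_-\,\partial_{\bm x}m$ and the $h$-corrections of $\bm\ell_{-,h}$ and $\bm r_{-,h}$. Since $\bm\ell_{-,h}\#\bm r_{-,h}\sim1$, one has
\[
\bm\ell_{-,h}\#m\#\bm r_{-,h}=m+h\,c_{m,h},
\]
where $c_{m,h}$ collects commutator-type terms, each carrying at least one $\bm x$-derivative of $m$ or one $h$-correction multiplied by $m$. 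The term $hJ_{h,-}^{\#}M_mR_h^\Gamma$ is $h$ times an order $-1$ operator. Hence
\[
J_{h,-}^{\#}M_m\Gamma J_h=M_m+h\Op_h(c'_{m,h}),
\]
and every seminorm of $c'_{m,h}$ is bounded by finitely many sup-norms of derivatives of $m$, because $m$ enters linearly in every term of the composition formula. The leading contribution is therefore exactly $\mathcal K_{m,\tau_h}$.

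\textbf{Step 3 (main obstacle): the correction term is a single symbol.} The correction is
\[
\tfrac h2\int\eta_{\tau_h}(s)S_{-,h}(T-s)\Op_h(c'_{m,h})S_h(T+s)\dd s ,
\]
and it must be identified, after the input and output cutoffs, with $h\Op_h(r_{m,h}^{\rm br})$ for one compactly supported symbol of order $0$.

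First, use the factorizations \eqref{eq:damped-factorization} and \eqref{eq:s-h-factorization} to write $S_{-,h}(T-s)=\Op_h(e^{-\mathcal G_{T-s,-}})F_h(-(T-s))+O(h)$ and similarly for $S_h(T+s)$. Next, use Egorov's theorem~\ref{thm:egorov} to conjugate $\Op_h(c'_{m,h})$ through the lossless propagators. The remaining product is $F_h(-(T-s))F_h(T+s)=F_h(2s)$. Because $|s|\le\tau_h/2=\nu h/2$, this is $F_h(\theta)$ with $|\theta|\le\nu h$, i.e.\ $e^{-\ii\theta\Op_h(\omega)/h}$ with $|\theta/h|\le\nu$ bounded. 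This is a function of $\Op_h(\omega)$ on the compactly microlocalized region, and by functional calculus it is a pseudodifferential operator with symbol $e^{-\ii(2s/h)\omega}$ plus lower-order terms.

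Integrating against $\eta_{\tau_h}(s)\dd s$ then gives a symbol of the form $\widehat\eta$-type in $\omega$. This is the same mechanism that produces $p_\eta(\omega)$ in the leading symbol. The point needing care is uniformity: the Egorov remainders and the damping remainders must have seminorms controlled linearly by seminorms of $c'_{m,h}$. This follows from the last sentence of Theorem~\ref{thm:egorov}, since the flow maps are fixed and independent of $m$. Combining these bounds with the bound from Step 2 yields \eqref{eq:bridge-symbol-bound}, with $N'$ depending only on the number of derivatives used in the composition and Egorov formulas.
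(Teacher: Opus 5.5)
Your proposal is correct and follows essentially the paper's route: the same two intertwinings \eqref{eq:intertwining-operator-remainder} and \eqref{eq:negative-left-propagator-intertwining}, the same middle identity $J_{h,-}^{\#}M_m\Gamma J_h=M_m+h\Op_h(c_{m,h})$ modulo negligible terms, and the same reason the correction is a single symbol, namely that it propagates along $\bm\Phi^{-(T-s)}\circ\bm\Phi^{T+s}=\bm\Phi^{2s}$ with $|2s|\le\nu h$. You make this explicit through the damped factorizations, Egorov's theorem and the short-time symbol of $F_h(2s)$, which is how the paper later computes the principal returned symbol; the one point to tighten is that the remainders $h\mathcal R_h^{\rm damp}$ and $h\mathcal R_{-,h}^{\rm damp}$ are propagating operators rather than pseudodifferential operators, so recording them as ``$+O(h)$'' does not by itself give the form $h\Op_h(r_{m,h}^{\rm br})$, and they must be absorbed by the same observation the paper uses: any such composite propagates along $\bm\Phi^{2s}$ and, for $|2s|\le\nu h$, equals $\Op_h$ of a compactly supported order-zero symbol plus a negligible term.
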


\begin{proof}
\eqref{eq:intertwining-operator-remainder} and \eqref{eq:negative-left-propagator-intertwining} give, uniformly for $|s|\le\tau_h/2$,
\begin{align*}
 U_0(T+s)J_h&=J_hS_h(T+s)+E_{+,s,h}^{\infty},\\
 J_{h,-}^{\#}U_0(T-s)&=S_{-,h}(T-s)J_{h,-}^{\#}+E_{-,s,h}^{\infty},
\end{align*}
where $E_{+,s,h}^{\infty}$ and $E_{-,s,h}^{\infty}$ are negligible.

Symbolic composition, using the principal identity $\bm r_-=\Gamma\bm r$, gives
\begin{equation}\label{eq:m-mode-composition}
 J_{h,-}^{\#}M_m\Gamma J_h=M_m+h\Op_h(c_{m,h})+R_{m,h}^{\infty},
\end{equation}
where $c_{m,h}$ is a compactly supported symbol of order $-1$ and $R_{m,h}^{\infty}$ is negligible.

The leading symbol in \eqref{eq:m-mode-composition} is $\bm\ell_-m\Gamma\bm r=m$. The corrections in \eqref{eq:full-mode-symbols} and \eqref{eq:full-negative-mode-symbols}, together with \eqref{eq:symbol-product}, show directly that every remaining non-smoothing term is $h$ times a compactly supported symbol of order $-1$.

Substituting these relations into the integrand defining $K_{m,\tau_h}$ shows that its compressed integrand equals
\begin{equation*}
 S_{-,h}(T-s)M_mS_h(T+s)+hA_{m,s,h}+R_{m,s,h}^{\infty},
\end{equation*}
Here $R_{m,s,h}^{\infty}$ is negligible and $A_{m,s,h}$ is an order-zero propagating operator along $\bm\Phi^{2s}$, because
$\bm\Phi^{-(T-s)}\circ\bm\Phi^{T+s}=\bm\Phi^{2s}$. Its amplitude bounds depend on only finitely many derivatives of $m$. Since $s=\nu h\vartheta$ with $|\vartheta|\le1/2$, this is a short-time operator. After the fixed cutoffs, it can therefore be written as $\Op_h(a_{m,s,h})$ plus a negligible remainder, where $a_{m,s,h}$ is a compactly supported order-zero symbol whose bounds are uniform in $s/h$. Finally,
\begin{equation*}
 \int_{-\tau_h/2}^{\tau_h/2}|\eta_{\tau_h}(s)|\dd s=\int_{-1/2}^{1/2}|\eta(\vartheta)|\dd\vartheta,
\end{equation*}
so integration preserves every finite derivative bound stated above. This proves \eqref{eq:physical-scalar-bridge}.
\end{proof}

Notice that the group property also gives the exact time-shift identity
\begin{equation*}
\bm{w}_R^m(2T+r)=U_0(r)K_{m,\tau_h}\bm{\Upsilon}_0
\end{equation*}
On the selected negative branch, an additional time $r=h\zeta$ has principal phase $e^{\ii\zeta\omega}$. This result will be needed when proving refocusing occurrence. 

Now we calculate the principal symbol of the returned operator. With the Fourier-transform convention
$$
    \widehat\eta(\varrho):=\int_{\mathbb R}e^{-\ii\varrho s}\eta(s)\dd s,
$$
we define the pulse response
\begin{equation}\label{eq:pulse-response}
p_\eta(\varpi):=\frac12\widehat\eta(-2\nu\varpi).
\end{equation}
For a phase point $\bm\rho$, define the spatial point reached by the outgoing positive packet after time $t$ by
\begin{equation*}
 \bm{X}_t(\bm{\rho}):=\bm{\pi}_{\bm{x}}\bm{\Phi}^t(\bm{\rho}).
\end{equation*}
The positive one-way damping written at the source phase point is
\begin{equation*}
 \mathcal G_t^{\rm out}(\bm{\rho}):=\int_0^t\beta(\bm{\Phi}^s(\bm{\rho}))\dd s =\mathcal G_t(\bm{\Phi}^t(\bm{\rho})).
\end{equation*}
\begin{theorem}\label{thm:symbol-tailored}
Let $q\in C_c^\infty(\mathcal U)$ with $\supp q\subset K$, put $Q_h=\Op_h(q)$, and let $m\in C_c^\infty(\mathbb R^2;\mathbb R)$. Then
\begin{equation}\label{eq:returned-symbol-expansion}
 Q_h\mathcal K_{m,\tau_h}Q_h
 =\Op_h(q^2j_m^+)+h\Op_h(r_{m,h})+R_{m,h}^{\infty},
\end{equation}
where $r_{m,h}$ is a compactly supported symbol of order $0$, the remainder $R_{m,h}^{\infty}$ is negligible from $H_h^{-N}(\mathbb R^2)$ to $H_h^N(\mathbb R^2)$ for every $N$, and
\begin{equation}\label{eq:returned-principal-symbol}
 j_m^+(\bm\rho)=\overline{p_\eta(\omega(\bm\rho))}m(\bm X_T(\bm\rho))\exp\left[-\mathcal G_T^{\rm out}(\bm\rho)-\mathcal G_{T,-}(\bm\rho)\right].
\end{equation}
For every $N$ there are $N'=N'(N)$ and $C_N$ such that
\begin{equation}\label{eq:returned-symbol-seminorm-bound}
 \|r_{m,h}\|_{0,N}\le C_N\max_{|\alpha|\le N'}\|\partial_{\bm x}^\alpha m\|_{L^\infty}.
\end{equation}

The total principal round-trip damping is
\begin{equation}\label{eq:round-trip-damping}
 \mathcal G_T^{\rm out}(\bm\rho)+\mathcal G_{T,-}(\bm\rho)=2\int_0^T\gamma(\bm\Phi^s(\bm\rho))\dd s.
\end{equation}
\end{theorem}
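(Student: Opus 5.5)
Substitute the damped factorizations into the defining integral of $\mathcal K_{m,\tau_h}$, reduce the sandwiched multiplication operator by Egorov's theorem, and integrate the short-time phase in $s$ to produce $p_\eta$. Concretely, use \eqref{eq:damped-factorization} and \eqref{eq:s-h-factorization} in
\[
\mathcal K_{m,\tau_h}=\tfrac12\int\eta_{\tau_h}(s)S_{-,h}(T-s)M_mS_h(T+s)\dd s .
\]
Up to $h$ times order-zero propagating operators, whose bounds depend on finitely many derivatives of $m$, the integrand becomes
\[
\Op_h(e^{-\mathcal G_{T-s,-}})\,F_h(-(T-s))\,M_m\,F_h(T+s)\,\Op_h(e^{-\mathcal G_{T+s}}).
\]
Here I have commuted the damping factor of $S_h$ to the right; if the factorization is written with the factor on the left, Egorov moves it at $O(h)$ cost.

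Next I rewrite $F_h(-(T-s))M_mF_h(T+s)$ as $F_h(-T)M_mF_h(T)\cdot F_h(2s)$. Theorem~\ref{thm:egorov} then gives
\[
F_h(-T)M_mF_h(T)=\Op_h(m\circ\bm X_T)+hR,
\]
using $\pi_{\bm x}\bm\Phi^T=\bm X_T$. The positive damping symbol $e^{-\mathcal G_{T+s}}$ is conjugated by $F_h(T+s)$, and Egorov turns it into the composition with the flow, so it becomes $e^{-\mathcal G^{\rm out}_{T+s}}$ at the source point. Since $s=O(h)$ and the symbols are smooth in $t$, replacing $T\pm s$ by $T$ in all amplitudes costs $O(h)$ with order-zero symbols. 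The sandwich with $Q_h$ localizes everything to $K$ and makes the composites pseudodifferential after the fixed cutoffs. This gives
\[
Q_h\mathcal K Q_h=\Op_h\!\bigl(q^2 m(\bm X_T)e^{-\mathcal G_T^{\rm out}-\mathcal G_{T,-}}\bigr)\,\tfrac12\int\eta_{\tau_h}(s)F_h(2s)\dd s\;+\;O(h).
\]
The $s$-integral is evaluated by functional calculus, or microlocally on $K$ by writing $F_h(2s)=\Op_h(e^{-2\ii s\omega/h})$ plus $O(h)$ for $s=O(h)$; a short-time parametrix suffices because the phase error is $O(s^2/h)=O(h)$. With $s=\nu h\vartheta$,
\[
\tfrac12\int\eta(\vartheta)e^{-2\ii\nu\vartheta\omega}\dd\vartheta=\tfrac12\widehat\eta(2\nu\omega).
\]
The symbol $\widehat\eta(2\nu\omega)$ equals $\overline{\widehat\eta(-2\nu\omega)}$ because $\eta$ is real, and the latter is $2\overline{p_\eta(\omega)}$. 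This gives \eqref{eq:returned-principal-symbol}. Collecting all $O(h)$ terms into $h\Op_h(r_{m,h})$, with the seminorm bound \eqref{eq:returned-symbol-seminorm-bound} inherited from the finitely many $m$-derivatives entering Egorov and the composition formula, proves \eqref{eq:returned-symbol-expansion}.

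For \eqref{eq:round-trip-damping}, I would note that the negative-branch damping $\mathcal G_{T,-}$, written via \eqref{eq:s-h-factorization}, must be evaluated where the returning packet actually sits. The returning negative packet travels along $\bm\Phi^{-t}$ from $\bm\Phi^T(\bm\rho)$ back to $\bm\rho$, so with the paper's definition $\mathcal G_{T,-}(\bm\rho)=\int_0^T\beta_-(\bm\Phi^s(\bm\rho))\dd s$ the loss is accumulated over the same ray segment as the outgoing one. Adding the two integrals,
\[
\beta_++\beta_-=(\gamma+\kappa)+(\gamma-\kappa)=2\gamma
\]
pointwise on that segment, which gives $2\int_0^T\gamma(\bm\Phi^s(\bm\rho))\dd s$. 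The identity therefore reduces to the oddness of $\kappa$ established earlier.

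The main obstacle is keeping track of where each damping symbol is evaluated, and in particular checking that the convention in \eqref{eq:s-h-factorization} places $\mathcal G_{T,-}$ on the same ray segment $\{\bm\Phi^s(\bm\rho)\}_{0\le s\le T}$ rather than on its reflection $\bm\xi\mapsto-\bm\xi$. I would first verify this by applying Egorov explicitly to the left factor of $S_{-,h}$, and, if needed, use $\beta_-(\bm x,\bm\xi)=\beta(\bm x,-\bm\xi)$ together with time-reversal symmetry of the flow to match the two descriptions. A secondary technical point is justifying that $\int\eta_{\tau_h}(s)F_h(2s)\dd s$ is pseudodifferential on $K$ with principal symbol $\widehat\eta(2\nu\omega)$ uniformly in $h$. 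I expect this to follow from the short-time representation $F_h(2s)=\Op_h(e^{-2\ii s\omega/h}(1+O(h)))$ for $|s|\le\nu h/2$.
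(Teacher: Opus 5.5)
Your proposal is correct and follows essentially the same route as the paper: insert the two damped factorizations, apply Egorov at time $T$, use the short-time symbol $e^{-\ii\nu\vartheta\omega}$ for the $O(h)$-time propagators, integrate against $\eta$ to produce $\overline{p_\eta(\omega)}$, and use $\beta+\beta_-=2\gamma$ along the common segment $\{\bm\Phi^s(\bm\rho)\}_{0\le s\le T}$. The differences are only bookkeeping: the paper writes $M_m\Op_h(e^{-\mathcal G_{T+s}})=\Op_h(me^{-\mathcal G_{T+s}})$ and conjugates this single product by $F_h(T)$, keeping one factor $F_h(s)$ on each side, whereas you first push the damping factor through $F_h(T+s)$ (so your displayed right factor should carry the symbol $e^{-\mathcal G^{\rm out}_{T+s}}$, as you then say) and merge the two factors $F_h(s)$ into $F_h(2s)$, which holds only modulo an $O(h)$ commutator, not as an exact identity.
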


\begin{proof}
All operators are first localized to one fixed compact flow tube containing the trajectories issued from $\supp q$. Since $\tau_h=\nu h$, for sufficiently small $h$ we have $|s|\leq \tau_h/2 < \delta_T$. Hence $T\pm s$ remain in the fixed time interval covered by $K_{T,\delta_T}$, and all propagator and symbol estimates below are uniform for $|s|\leq \tau_h/2$. The positive and negative factorizations \eqref{eq:damped-factorization} and \eqref{eq:s-h-factorization} give
\begin{align}
Q_h\mathcal K_{m,\tau_h}Q_h=&\frac12\int_{-\tau_h/2}^{\tau_h/2}\eta_{\tau_h}(s)Q_h\Op_h(e^{-\mathcal G_{T-s,-}})F_h(-T+s)M_m\Op_h(e^{-\mathcal G_{T+s}})F_h(T+s)Q_h\dd s\notag\\
&\quad+h\Op_h(r_{m,h}^{(1)})+R_{m,h}^{\infty,(1)},
\label{eq:returned-main-integral}
\end{align}
where $r_{m,h}^{(1)}$ is a compactly supported order-zero symbol satisfying \eqref{eq:returned-symbol-seminorm-bound} and $R_{m,h}^{\infty,(1)}$ negligible.

Set $s=\nu h\vartheta$, $|\vartheta|\le1/2$. Then $\eta_{\tau_h}(s)\dd s=\eta(\vartheta)\dd\vartheta$. The group property gives
\begin{equation*}
F_h(-T+s)M_m\Op_h(e^{-\mathcal G_{T+s}})F_h(T+s)=F_h(s)\Bigl[F_h(-T)\Op_h(me^{-\mathcal G_{T+s}})F_h(T)\Bigr]F_h(s).
\end{equation*}
Applying Theorem \ref{thm:egorov} to the family $me^{-\mathcal G_{T+s}}$. Since $|s|\leq\tau_h/2=\nu h/2$, this family has uniform order-zero symbol bounds. Hence
\begin{equation}\label{eq:returned-egorov-step}
 F_h(-T)\Op_h(me^{-\mathcal G_{T+s}})F_h(T) = \Op_h\left((me^{-\mathcal G_{T+s}})\circ\bm\Phi^T\right)+h\Op_h(r_{m,s,h}^{\rm Eg})+R_{m,s,h}^{\infty,\rm Eg},
\end{equation}
where $R_{m,s,h}^{\infty,\rm Eg}$ is negligible, and $r_{m,s,h}^{\rm Eg}$ is a compactly supported order-zero symbol, uniformly in $s/h$, and satisfies the bounds in \eqref{eq:returned-symbol-seminorm-bound}. 
Since
\begin{equation*}
 \mathcal G_{T+s}(\bm\Phi^T(\bm\rho))=\int_{-s}^{T}\beta(\bm\Phi^u(\bm\rho))\dd u,
\end{equation*}
Taylor's formula, together with the uniform derivative bounds on the compact flow tube, gives
\begin{equation*}
 e^{-\mathcal G_{T+s}\circ\bm\Phi^T}=e^{-\mathcal G_T^{\rm out}}+h g_{+,\vartheta,h}.
\end{equation*}
Here $g_{+,\vartheta,h}$ is a compactly supported order-zero symbol, uniformly for $|\vartheta|\le1/2$. Hence the principal symbol in \eqref{eq:returned-egorov-step} is
$m(\bm X_T)e^{-\mathcal G_T^{\rm out}}$.

$F_h(\nu h\vartheta)$ satisfies
$$
    \partial_\vartheta F_h(\nu h\vartheta) +\ii\nu\Op_h(\omega)F_h(\nu h\vartheta)=0, \qquad F_h(0)=I. 
$$
Solving the corresponding symbolic evolution gives
$$
    F_h(\nu h\vartheta) = \Op_h(e^{-\ii\nu\vartheta\omega}+hf_{\vartheta,h})+R_{\vartheta,h}^{\infty,F},
$$
where $f_{\vartheta,h}$ is uniformly of order zero for $|\vartheta|\leq 1/2$, and $R_{\vartheta,h}^{\infty, F}$ is negligible.

Likewise,
\begin{equation}\label{eq:negative-damping-symbol-expansion}
 e^{-\mathcal G_{T-s,-}}=e^{-\mathcal G_{T,-}}+h g_{-,\vartheta,h},
\end{equation}
where $g_{-,\vartheta,h}$ is a compactly supported order-zero symbol, uniformly for $|\vartheta|\le1/2$. Combining \eqref{eq:returned-main-integral}--\eqref{eq:negative-damping-symbol-expansion} with \eqref{eq:symbol-product} gives, uniformly in $\vartheta$,
\begin{equation*}
 Q_h\Op_h\left(e^{-2\ii\nu\vartheta\omega}m(\bm X_T)e^{-\mathcal G_T^{\rm out}-\mathcal G_{T,-}}\right)Q_h
 =\Op_h\left(q^2e^{-2\ii\nu\vartheta\omega}
 m(\bm X_T)e^{-\mathcal G_T^{\rm out}-\mathcal G_{T,-}}
 +h r_{m,\vartheta,h}^{(2)}\right)+R_{m,\vartheta,h}^{\infty,(2)},
\end{equation*}
where $r_{m,\vartheta,h}^{(2)}$ is a compactly supported order-zero symbol with the stated uniform derivative bounds and $R_{m,\vartheta,h}^{\infty,(2)}$ is negligible.

Integration in $\vartheta$ preserves each finite derivative bound. Using
\begin{equation*}
 \frac12\int_{-1/2}^{1/2}\eta(\vartheta)e^{-2\ii\nu\vartheta\omega}\dd\vartheta=\frac12\widehat\eta(2\nu\omega)=\overline{p_\eta(\omega)},
\end{equation*}
we obtain \eqref{eq:returned-symbol-expansion}, \eqref{eq:returned-principal-symbol}, and \eqref{eq:returned-symbol-seminorm-bound}. Since $\beta=\gamma+\kappa$ and $\beta_-=\gamma-\kappa$, identity \eqref{eq:round-trip-damping} follows immediately.
\end{proof}

\begin{remark}
The decomposition
$$
    \beta_+=\gamma+\kappa, \qquad \beta_-=\gamma-\kappa
$$
makes the structure of the returned amplitude transparent. The coefficient $\gamma$ is the material attenuation rate obtained from the dissipative matrix $B_0$, whereas $\kappa$ is the geometric transport correction generated by the phase-space variation of the selected polarization. Hence, along the matched outgoing and returning rays,
$$
\begin{aligned}
    \mathcal G_T^{\rm out}(\bm\rho)+\mathcal G_{T,-}(\bm\rho)&=\int_0^T\left[(\gamma+\kappa)+(\gamma-\kappa)\right](\bm\Phi^s(\bm\rho))\dd s\\
    &=2\int_0^T\gamma(\bm\Phi^s(\bm\rho))\dd s.
\end{aligned}
$$
Thus the geometric transport corrections cancel between the two branches, while the material attenuation is accumulated twice, once during outgoing propagation and once during the return. Accordingly, the principal returned symbol contains the physical round-trip attenuation factor
$$
    \exp\left[-2\int_0^T\gamma(\bm\Phi^s(\bm\rho))\dd s\right].
$$
\end{remark}

In subsequent discussions, we require the modulation function $m$ and mirror time $T$ varying with $h$. Notice that all constructions above are uniform when $T$ ranges in a fixed compact subset of $(0,\infty)$. Each finite symbol-derivative bound uses only finitely many derivatives of $m$. Likewise, each Sobolev remainder estimate uses only finitely many derivatives of $m$. Therefore, theorem \ref{thm:symbol-tailored} could be generalized to the following proposition.

\begin{proposition}
Let $T=T_h$ depending on $h$ stay in a compact subset of $(0,\infty)$ and let $m=m_h$ be real, $h$--dependent and supported in one fixed compact set, satisfy
\begin{equation*}
1+\eta_{\tau_h}(t-T_h)m_h(\bm x)\ge\kappa_0,
\end{equation*}
and satisfy
\begin{equation*}
 \sup_{0<h\le h_0}\max_{|\alpha|\le N}\|\partial_{\bm x}^{\alpha}m_h\|_{L^\infty}<\infty \qquad \text{for every }N\in\mathbb N_0.
\end{equation*}
Then
\begin{equation}\label{eq:uniform-m-symbol-remainder}
 Q_h\mathcal K_{m_h,\tau_h}Q_h = \Op_h(q^2j_{m_h}^+)+h\Op_h(r_{m_h,h})+R_{m_h,h}^{\infty},
\end{equation}
where $R_{m_h,h}^\infty$ is negligible, and $r_{m_h,h}$ is a compactly supported order-zero symbol such that for every $N\in\mathbb{N}_0$,
\begin{equation*}
 \|r_{m_h,h}\|_{0,N} \le C_N\max_{|\alpha|\le N'(N)}\|\partial_{\bm x}^\alpha m_h\|_{L^\infty}.
\end{equation*}
\end{proposition}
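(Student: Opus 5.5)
The plan is to rerun the proof of Theorem~\ref{thm:symbol-tailored} with the parameters $T=T_h$ and $m=m_h$ frozen, and track how each constant depends on them. The claim is that every constant depends only on two things: a compact time interval $[T_-,T_+]\subset(0,\infty)$ containing all $T_h$, and finitely many seminorms $\max_{|\alpha|\le N'}\|\partial^\alpha_{\bm x}m_h\|_{L^\infty}$.

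\textbf{Step 1: fix the $h$-independent geometric objects.}
\begin{itemize}
\item Choose the flow tube $K_{T_+,\delta}$ once, with $\delta$ chosen so that it covers all times $|t|\le T_++\delta$.
\item The symbols $\bm r_h,\bm\ell_h,\beta_h,\bm r_{-,h},\bm\ell_{-,h},\beta_{-,h}$ and the intertwining relations \eqref{eq:intertwining-operator-remainder}, \eqref{eq:negative-left-propagator-intertwining} do not involve $T$ or $m$. They hold uniformly for $t$ in this compact interval.
\item The factorizations \eqref{eq:damped-factorization} and \eqref{eq:s-h-factorization} likewise hold uniformly for $0\le t\le T_++\delta$. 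Their remainders have bounds depending only on the tube.
\item The positivity hypothesis $1+\eta_{\tau_h}(t-T_h)m_h\ge\kappa_0$ is used only to guarantee that the physical operator $K_{m_h,\tau_h}$ is well defined, via Kato's theorem. It plays no role in the scalar symbol computation.
\end{itemize}

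\textbf{Step 2: track the $m$-dependence through the proof of Theorem~\ref{thm:symbol-tailored}.} Multiplication by $m_h$ enters in exactly three places.
\begin{itemize}
\item The composition $M_{m_h}\Op_h(e^{-\mathcal G_{T_h+s}})=\Op_h(m_he^{-\mathcal G_{T_h+s}})$. This is exact for the left quantization, since $m_h$ is a function of $\bm x$ only.
\item The Egorov step, Theorem~\ref{thm:egorov}. Its remainder symbol has seminorms controlled by finitely many seminorms of $a_h=m_he^{-\mathcal G_{T_h+s}}$. By Leibniz, these are bounded by $C\max_{|\alpha|\le N'}\|\partial^\alpha m_h\|_{L^\infty}$. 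The Egorov constants are uniform for $t\in[T_-,T_+]$ because the flow and its derivatives are bounded uniformly on the fixed tube.
\item The composition with $\Op_h(e^{-\mathcal G_{T_h-s,-}})$, $F_h(\pm\nu h\vartheta)$ and $Q_h$ via \eqref{eq:symbol-product}. Each composition remainder is bilinear in the symbols, so its seminorms are bounded by products of finitely many seminorms. The seminorms of the $m$-independent factors are uniform, so these remainders are again linear in the $m_h$ seminorms.
\end{itemize}
The Taylor expansions $e^{-\mathcal G_{T_h\pm s}}=e^{-\mathcal G_{T_h}}+hg$ use only bounds on $\beta$ and on the flow over the fixed tube, so they are uniform in $T_h$.

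\textbf{Step 3: linearity and negligible remainders.} Every remainder produced in the argument is linear in $m_h$, because $m_h$ appears exactly once in each term. This linearity gives the stated seminorm bound on $r_{m_h,h}$. For the negligible remainders $R^\infty_{m_h,h}$, I would check that each is negligible uniformly once $m_h$ ranges over a bounded set of $C^\infty$ with fixed support. Each such remainder is controlled by finitely many seminorms of the symbols involved, combined with Lemmas~\ref{lem:scalar-energy-estimate} and \ref{lem:background-energy-estimate}. The hypothesis that every $C^N$ norm of $m_h$ is bounded in $h$ then gives $O(h^\infty)$ bounds $H_h^{-N}\to H_h^N$ with $h$-independent constants.

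\textbf{Main obstacle.} The real work is making the uniformity in the Egorov step and in the short-time symbolic description of $F_h(\nu h\vartheta)$ genuinely quantitative. I would handle this by checking that the remainder $R_h^{\rm Eg}(t)$ in Theorem~\ref{thm:egorov} arises from finitely many terms of \eqref{eq:symbol-product}, each applied to $a_h\circ\bm\Phi^t$. The seminorms of $a_h\circ\bm\Phi^t$ are bounded via the chain rule by seminorms of $a_h$ times the $C^k$ norms of $\bm\Phi^t$ on the tube. Finally, the Duhamel integral in the proof of Theorem~\ref{thm:egorov} preserves these bounds, because it involves only conjugation by $F_h$. That conjugation maps order-zero operators to order-zero operators with constants depending on $t$ only through the fixed interval.
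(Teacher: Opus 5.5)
Your proposal is correct and follows the paper's route. The paper justifies this proposition only by remarking that every construction in Theorem~\ref{thm:symbol-tailored} is uniform for $T$ in a compact subset of $(0,\infty)$, and that each symbol and Sobolev remainder bound involves only finitely many derivatives of $m$; your Steps~1--3 carry out that same bookkeeping in more detail.

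Two minor corrections. First, the positivity hypothesis is not needed even to define $K_{m_h,\tau_h}$, which involves only $U_0$; positivity matters only for the modulated evolution $U(t,s)$ behind Proposition~\ref{prop:backpropagating}. Second, besides the three places you list, $m_h$ also enters through the cross terms containing $h\mathcal R_h^{\rm damp}$ and $h\mathcal R_{-,h}^{\rm damp}$ in \eqref{eq:returned-main-integral}, and your Step~3 linearity argument covers these in the same way.
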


\section{Design of modulation function}\label{sec:necessity-tailored-annulus}

The preceding section characterizes the principal returned-wave symbol generated by a prescribed spatial modulation. We now return to the forced Maxwell--Debye system generated by the point source and address the constructive problem of choosing an admissible, $h$-dependent modulation $m_h$. The construction is causal in the sense that $m_h$ is determined from measurements of the outgoing field available before the material pulse is applied.

In this section we first identify the leading positive-TM component generated by the short point-source pulse. Then we isolate a regular visible component of the source-side phase space, whose outgoing trajectories reach the annular control region, and parameterize this component by frequency and direction. Third, we apply the known lossless backward propagator to the measured outgoing field, form a normalized directional energy, and define $m_h$ reciprocally along the corresponding pulse-center incidence curve.

The modulation and measurements are localized to a bounded annular control region $\mathcal A\subset\mathbb R^2$ surrounding a known bounded set containing the inhomogeneity. After the fixed annular input and output cutoffs, let
$$
\mathcal O_{\mathcal A}:\mathcal H_{\rm v}\to\mathcal H_{\rm s}
$$
be a uniformly bounded order-zero semiclassical pseudodifferential observation operator. Since $\Gamma$ is known and acts only on the field components, we incorporate it into the positive-branch observation operator and set
$$
\mathcal O_{\mathcal A}^+:=\mathcal O_{\mathcal A}\Gamma.
$$
Let $o_+$ denote its scalar principal symbol on the outgoing positive TM branch. Then $o_+$ is a compactly supported order-zero symbol on $\mathcal U$, and, microlocally in the annulus,
\begin{equation}\label{eq:observation-symbol-expansion}
\mathcal O_{\mathcal A}^+J_h = \Op_h(o_+)+hR_h^{\rm obs},
\end{equation}
where $R_h^{\rm obs}$ is a uniformly bounded order-zero semiclassical pseudodifferential operator on $\mathcal H_{\rm s}$.

Throughout Sections~\ref{sec:necessity-tailored-annulus}--\ref{sec:causal-refocusing}, we assume that the material-pulse profile $\eta$ is real and even. Consequently, the pulse-response factor $p_\eta$ defined in \eqref{eq:pulse-response} is real. We restrict each retained frequency interval to a region on which $p_\eta$ has a fixed nonzero sign and, without loss of generality, take this sign to be positive.

\subsection{Point source and its selected positive-mode amplitude}

Fix a compact set $K\subset\mathcal U$ whose flow tube for the mirror times used below remains in $\mathcal U$. Recall that in the $4\times4$ TM system the corresponding background field $\bm{W}_h$ satisfies
\begin{equation*}
    \partial_t\bm{W}_h+A_0^{-1}\Lambda\bm{W}_h+A_0^{-1}B_0\bm{W}_h = -A_0^{-1}g_h(t)\delta(\bm{x}-\bm{z}_\star)\bm{e}_3, \qquad \bm{e}_3=(0,0,1,0)^\top.
\end{equation*}

The positive TM eigenvector \eqref{eq:explicit-r-positive-branch} gives the source-to-mode coupling
$$
    q_\star:=-\bm{r}(\bm{z}_\star,\bm{\xi})^*\bm{e}_3=-\frac{1}{\sqrt{2\epsilon_0\epsilon_\infty(\bm{z}_\star)}},
$$
which is nonzero and independent of $\bm{\xi}$. We therefore define the selected source spectrum by
\begin{equation}\label{eq:source-spectrum}
    s_h(\varpi):= q_\star\hat{g}_h\left(\frac{\varpi}{h}\right) = q_\star pe^{-\ii \zeta_{\rm src}\varpi}\hat{f}(\varpi).
\end{equation}

Let $\chi_\star\in C_c^\infty(\mathbb R_{\bm{\xi}}^2\setminus\{0\};[0,1])$ which satisfies
\begin{equation}\label{eq:source-cutoff}
    \{\bm{z}_\star\}\times\supp\chi_\star\subset K\subset\mathcal U.
\end{equation}
Define $Q_{\star,h}:=\Op_h(\chi_\star)$. In the subsequent discussions, we will repeatedly apply truncation $Q_{\star,h}$ to ensure everything is within the exact tube $K$. 

As stated in Proposition~\ref{prop:backpropagating}, the back-propagating wave is determined by applying $K_{m,\tau_h}$ to $\bm{\Upsilon}_{0,h}:=\partial_t\bm{W}_h(0)$. Now we compress $\bm{\Upsilon}_{0,h}$ to the positive branch. 

\begin{lemma}\label{lem:short-time-point-parametrix}
Let $\bm z\in\mathbb R^2$ and $\chi_0\in C_c^\infty(\mathbb R_{\bm\xi}^2\setminus\{0\})$. For every fixed $R>0$ and $|\zeta|\le R$,
\begin{equation}\label{eq:short-time-point-parametrix}
 \Op_h(\chi_0)S_h(-h\zeta)\delta_{\bm z}
 =\frac1{(2\pi h)^2}\int_{\mathbb R^2}e^{\ii(\bm x-\bm z)\cdot\bm\xi/h}
 \left[e^{\ii\zeta\omega(\bm z,\bm\xi)}\chi_0(\bm\xi)
 +h b_{\zeta,h}(\bm x,\bm\xi)\right]\dd\bm\xi,
\end{equation}
where $b_{\zeta,h}$ is a controlled amplitude, uniformly for $|\zeta|\le R$. Thus its $\bm\xi$-support lies in one fixed compact subset of $\mathbb R^2\setminus\{0\}$, and all its derivatives are uniformly bounded on compact $\bm x$-sets.
\end{lemma}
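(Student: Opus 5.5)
We prove \eqref{eq:short-time-point-parametrix} by expressing the short-time propagator $S_h(-h\zeta)$ as a pseudodifferential operator whose symbol is computed to first order. Its kernel is then applied to the delta function.

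\emph{Step 1: symbolic form of $S_h(-h\zeta)$.} Set $G_\zeta:=S_h(-h\zeta)$ as a function of the rescaled time $\zeta$. By \eqref{eq:positive-scalar-propagator},
\begin{equation*}
\partial_\zeta G_\zeta=\bigl(\ii\Op_h(\omega)+h\Op_h(\beta_h)\bigr)G_\zeta,\qquad G_0=I,
\end{equation*}
so on the $\zeta$ scale the generator $\ii\Op_h(\omega)+h\Op_h(\beta_h)$ is of order one and bounded, with no $h^{-1}$. This is the same situation as the computation of $F_h(\nu h\vartheta)$ in the proof of Theorem~\ref{thm:symbol-tailored}, except that the damping now enters only at order $h$.

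We make the ansatz $\Op_h(\chi_0)G_\zeta=\Op_h(c_\zeta)+R^\infty_\zeta$ with
\begin{equation*}
c_\zeta=\chi_0(\bm\xi)e^{\ii\zeta\omega(\bm x,\bm\xi)}+hc_{1,\zeta}+\cdots.
\end{equation*}
Using \eqref{eq:symbol-product}, the symbol of $\Op_h(c_\zeta)\bigl(\ii\Op_h(\omega)+h\Op_h(\beta_h)\bigr)$ is $\ii c_\zeta\omega+O(h)$. The leading equation $\partial_\zeta c^{(0)}=\ii\omega c^{(0)}$ with $c^{(0)}_0=\chi_0$ gives $c^{(0)}_\zeta=\chi_0e^{\ii\zeta\omega}$.

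The next terms solve linear transport ODEs in $\zeta$ of the form $\partial_\zeta c_{j}=\ii\omega c_j+(\text{known})$. Since $|\zeta|\le R$ and every factor involved is bounded, the $c_j$ are symbols whose $\bm\xi$-support stays inside $\supp\chi_0$, which is a fixed compact subset of $\mathbb R^2\setminus\{0\}$. Their derivative bounds are uniform in $|\zeta|\le R$, because $e^{\ii\zeta\omega}$ has bounded derivatives there.

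The remainder is controlled by Duhamel's formula together with Lemma~\ref{lem:scalar-energy-estimate} on $H_h^s$, using that $|h\zeta|\le hR$ lies in a fixed compact time interval. This gives $\Op_h(\chi_0)G_\zeta=\Op_h(\chi_0e^{\ii\zeta\omega}+hb'_{\zeta,h})+R^\infty_{\zeta,h}$, where $R^\infty_{\zeta,h}$ is negligible from $H_h^{-N}$ to $H_h^{N}$. Borel summation is used to make $b'$ a genuine symbol with compact $\bm\xi$-support.

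\emph{Step 2: applying to $\delta_{\bm z}$.} For the left quantization,
\begin{equation*}
\Op_h(a)\delta_{\bm z}(\bm x)=(2\pi h)^{-2}\int e^{\ii(\bm x-\bm z)\cdot\bm\xi/h}a(\bm x,\bm\xi)\dd\bm\xi.
\end{equation*}
The leading amplitude is $\chi_0(\bm\xi)e^{\ii\zeta\omega(\bm x,\bm\xi)}$, but the statement requires $\omega(\bm z,\bm\xi)$ in place of $\omega(\bm x,\bm\xi)$. We write
\begin{equation*}
e^{\ii\zeta\omega(\bm x,\bm\xi)}-e^{\ii\zeta\omega(\bm z,\bm\xi)}=(\bm x-\bm z)\cdot\bm v_\zeta(\bm x,\bm\xi)
\end{equation*}
by Taylor's formula, with $\bm v_\zeta$ smooth. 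Then $(\bm x-\bm z)e^{\ii(\bm x-\bm z)\cdot\bm\xi/h}=\frac{h}{\ii}\nabla_{\bm\xi}e^{\ii(\bm x-\bm z)\cdot\bm\xi/h}$. Integrating by parts in $\bm\xi$ introduces no boundary terms, since $\chi_0$ is compactly supported. The result is $h$ times the amplitude $\ii\nabla_{\bm\xi}\cdot(\chi_0\bm v_\zeta)$, which is compactly supported in $\bm\xi$ within $\supp\chi_0$ and has all derivatives uniformly bounded on compact $\bm x$-sets.

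Adding $b'_{\zeta,h}$ defines $b_{\zeta,h}$. The negligible operator $R^\infty_{\zeta,h}$ applied to $\delta_{\bm z}\in H_h^{-2}$ gives an $O(h^\infty)$ smooth function. We absorb it into $hb_{\zeta,h}$ by writing it as an oscillatory integral with amplitude $e^{-\ii(\bm x-\bm z)\cdot\bm\xi/h}\tilde\chi(\bm\xi)(2\pi h)^2\,R^\infty\delta_{\bm z}(\bm x)/\!\int\tilde\chi$. This amplitude is $O(h^\infty)$ in every derivative, the losses from differentiating the phase being powers of $h^{-1}$.

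\emph{Main obstacle.} The main difficulty is Step 1: justifying the symbolic representation of the short-time propagator uniformly in $\zeta$, with a negligible remainder on Sobolev spaces of negative order. Negative order is needed because $\delta_{\bm z}\notin L^2$. The plan is to construct the symbol to high order, $c^{(N)}_\zeta$, so that
\begin{equation*}
\bigl(\partial_\zeta-\ii\Op_h(\omega)-h\Op_h(\beta_h)\bigr)^{\rm right}\Op_h(c^{(N)}_\zeta)=O(h^N)\quad\text{in }H_h^{-M}\to H_h^{M}.
\end{equation*}
The difference $\Op_h(c^{(N)}_\zeta)-\Op_h(\chi_0)G_\zeta$ is then estimated via Duhamel's formula and Lemma~\ref{lem:scalar-energy-estimate}. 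Uniformity comes from the fact that the $\zeta$-interval is fixed and each $c_j$ grows at most polynomially in $|\zeta|\le R$.
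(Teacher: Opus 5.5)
Your proposal is correct and follows essentially the same route as the paper: a rescaled-time symbolic construction of $S_h(-h\zeta)$ with leading symbol $e^{\ii\zeta\omega}$ and bounded transport corrections, remainder control by Duhamel's formula and the energy estimate on negative-order $H_h^s$, application to $\delta_{\bm z}$, and freezing of $\omega(\bm x,\bm\xi)$ at $\bm z$ by Taylor's formula together with $(x_j-z_j)e^{\ii(\bm x-\bm z)\cdot\bm\xi/h}=\frac h\ii\partial_{\xi_j}e^{\ii(\bm x-\bm z)\cdot\bm\xi/h}$. The differences are only technical. You let the generator act on the right, which relies on $S_h$ commuting with its time-independent generator (worth stating explicitly); this keeps your symbol supported in $\supp\chi_0$, so you avoid the paper's auxiliary right cutoff $\Op_h(\widetilde\chi_0)$ and the argument that inserting it costs only a smoothing error. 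You also give an explicit amplitude absorbing the $O(h^\infty)$ remainder, where the paper merely asserts that absorption.
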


\begin{proof}
Since $S_h$ is only valid on a compact frequency region where the positive branch is controlled, we have to choose truncation function $\widetilde\chi_0\in C_c^\infty(\mathbb R^2_{\bm\xi}\setminus\{0\})$, which is equal to one on a neighborhood of $\supp\chi_0$, and apply $S_h$ onto $\widetilde\chi_0\delta_{\bm{z}}$. 

Differentiating $S_h(-h\zeta)$ gives
\begin{equation*}
 \partial_\zeta S_h(-h\zeta)=\ii\Op_h(\omega)S_h(-h\zeta)+h\Op_h(\beta_h)S_h(-h\zeta),
\end{equation*}
and the leading symbol equation is $\partial_\zeta s_0=\ii\omega s_0$, $s_0|_{\zeta=0}=1$, hence $s_0=e^{\ii\zeta\omega}$. The successive transport equations preserve the stated derivative and support bounds uniformly on bounded $\zeta$-intervals. So solving the symbol equation term by term gives
\begin{equation}\label{eq:shorttime-expansion}
 \Op_h(\chi_0)S_h(-h\zeta)\Op_h(\widetilde\chi_0)
 =\Op_h\left(\chi_0e^{\ii\zeta\omega}+h b_{\zeta,h}^{(1)}+h^2r_{\zeta,h}^{(2)}\right)
 +R_{\zeta,h}^{\infty},
\end{equation}
where $b_{\zeta,h}^{(1)}$ and $r_{\zeta,h}^{(2)}$ are controlled amplitudes, uniformly for $|\zeta|\le R$, and $R_{\zeta,h}^{\infty}$ is negligible.

Fix $N_0>1$. In two-dimensional case,
\begin{equation}\label{eq:delta-semiclassical-sobolev-bound}
 \delta_{\bm z}\in H_h^{-N_0}(\mathbb R^2),
 \qquad
 \|\delta_{\bm z}\|_{H_h^{-N_0}(\mathbb R^2)}\le C_{N_0}h^{-1},
\end{equation}
uniformly in $\bm z$. Hence any family which is smoothing to all orders sends $\delta_{\bm z}$ to $O(h^L)$ in $H_h^K$ for arbitrary prescribed $K,L$, after choosing the input/output Sobolev index and re-indexing the arbitrary power in the smoothing estimate. The insertion of $\Op_h(\widetilde\chi_0)$ on the right changes the left-hand side of \eqref{eq:short-time-point-parametrix}, after the output cutoff $\Op_h(\chi_0)$, by such a scalar smoothing family. Its Schwartz kernel applied to $\delta_{\bm z}$ is therefore absorbed into the remainder. Lastly, applying \eqref{eq:shorttime-expansion} to $\delta_{\bm z}$ gives an oscillatory integral with amplitude
$e^{\ii\zeta\omega(\bm x,\bm\xi)}\chi_0(\bm\xi)$ plus $h$ times a controlled amplitude. Taylor's formula writes
\begin{equation*}
 e^{\ii\zeta\omega(\bm x,\bm\xi)}-e^{\ii\zeta\omega(\bm z,\bm\xi)}=\sum_{j=1}^2(x_j-z_j)c_{j,\zeta}(\bm x,\bm z,\bm\xi),
\end{equation*}
with $c_{j,\zeta}$ satisfying the same derivative and support bounds. Since
\begin{equation}\label{eq:trick}
 (x_j-z_j)e^{\ii(\bm x-\bm z)\cdot\bm\xi/h}=\frac h\ii\partial_{\xi_j}e^{\ii(\bm x-\bm z)\cdot\bm\xi/h},
\end{equation}
integration by parts in $\bm\xi$ places the frozen-symbol difference, the $h^2r_{\zeta,h}^{(2)}$ term, and the smoothing kernel in the single amplitude $h b_{\zeta,h}$ of \eqref{eq:short-time-point-parametrix}.
\end{proof}

\begin{proposition}
Microlocally on the compact set in \eqref{eq:source-cutoff},
\begin{equation}\label{eq:source-state}
    Q_{\star,h}J_h^{\#}\bm{\Upsilon}_{0,h}(\bm x)
    =\frac1{(2\pi h)^2}\int_{\mathbb R^2}e^{\ii(\bm x-\bm z_\star)\cdot\bm\xi/h}\bigg[-\frac{\ii}{h}\omega(\bm z_\star,\bm\xi)
    s_h\bigl(\omega(\bm z_\star,\bm\xi)\bigr)\chi_\star(\bm\xi)+r_{\Upsilon,h}(\bm x,\bm\xi)\bigg]\dd\bm\xi,
\end{equation}
where $r_{\Upsilon,h}$ is a controlled amplitude. More explicitly, its $\bm\xi$-support is contained in one fixed compact subset of $\mathbb R^2\setminus\{0\}$, and for every $\chi\in C_c^\infty(\mathbb R^2_{\bm x})$ and every $N$,
\begin{equation}\label{eq:source-remainder-seminorm}
 \max_{|\alpha|+|\beta|\le N}\sup_{\bm x,\bm\xi}\bigl|\partial_{\bm x}^{\alpha}\partial_{\bm\xi}^{\beta}(\chi(\bm x)r_{\Upsilon,h}(\bm x,\bm\xi))\bigr|\le C_{\chi,N}
\end{equation}
uniformly for small $h$.
\end{proposition}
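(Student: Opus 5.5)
The plan is to write $\bm\Upsilon_{0,h}=\partial_t\bm W_h(0)$ explicitly through Duhamel's formula \eqref{eq:duhamel} for the background group, then compress it to the positive branch with the intertwining relation \eqref{eq:left-intertwining-operator-remainder}, and finally apply the short-time parametrix of Lemma~\ref{lem:short-time-point-parametrix}.

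\textbf{Step 1: Duhamel representation.} Since $g_h$ is supported in $t<0$ (indeed in $(-2h\zeta_{\rm src},0)$), Duhamel gives
$$
\bm W_h(t)=-\int_{\mathbb R}g_h(t-r)\,U_0(r)A_0^{-1}\delta_{\bm z_\star}\bm e_3\dd r .
$$
Only $r\in(0,2h\zeta_{\rm src})$ contributes near $t=0$. Differentiating at $t=0$ gives
$$
\bm\Upsilon_{0,h}=-\int g_h'(-r)\,U_0(r)A_0^{-1}\delta_{\bm z_\star}\bm e_3\dd r .
$$
This agrees with $-A_0^{-1}(\Lambda+B_0)\bm W_h(0)$, because $g_h$ and its derivatives vanish at $t=0$. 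Substituting $r=-h\zeta$ with $\zeta\in(-2\zeta_{\rm src},0)$ gives $g_h'(h\zeta)=h^{-2}pf'(\zeta-\zeta_{\rm src})$. The $r$-integral is therefore $h^{-1}$ times an integral over a bounded $\zeta$-interval, and this factor $h^{-1}$ is the source of the prefactor in \eqref{eq:source-state}.

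\textbf{Step 2: compression to the positive branch.} I would apply $Q_{\star,h}J_h^{\#}$ and use \eqref{eq:left-intertwining-operator-remainder} to replace $J_h^{\#}U_0(-h\zeta)$ by $S_h(-h\zeta)J_h^{\#}$. The remainder is negligible uniformly in $\zeta$, and by \eqref{eq:delta-semiclassical-sobolev-bound} it sends $\delta_{\bm z_\star}$ to $O(h^\infty)$ in every $H_h^N$; after multiplication by $h^{-1}$ it is still absorbed into $r_{\Upsilon,h}$.

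Next I would compute $J_h^{\#}A_0^{-1}\bm e_3\delta_{\bm z_\star}$. Its principal symbol is $\bm\ell A_0^{-1}\bm e_3=\bm r^*\bm e_3=-q_\star$. The $h\bm\ell_1$ correction, together with the $x$-dependence frozen at $\bm z_\star$ through the Taylor/integration-by-parts device \eqref{eq:trick}, contributes $h$ times a controlled amplitude. To make the intertwining legitimate, I would insert a cutoff $\Op_h(\widetilde\chi_\star)$, with $\widetilde\chi_\star=1$ near $\supp\chi_\star$, between $S_h$ and $J_h^{\#}$. This is justified because $S_h(-h\zeta)$ moves frequencies by only $O(h)$. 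The commutator error is a smoothing family, so it is again negligible on $\delta_{\bm z_\star}$.

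\textbf{Step 3: short-time parametrix and the $\zeta$-integral.} Lemma~\ref{lem:short-time-point-parametrix} with $\chi_0=\chi_\star$ gives an amplitude $e^{\ii\zeta\omega(\bm z_\star,\bm\xi)}\chi_\star(\bm\xi)+hb_{\zeta,h}$, uniformly for $|\zeta|\le 2\zeta_{\rm src}$. Integrating in $\zeta$ against $h^{-1}q_\star pf'(\zeta-\zeta_{\rm src})$ yields the leading amplitude
$$
h^{-1}q_\star\int g_h'(t)\,e^{-\ii t\omega/h}\dd t\cdot(\text{sign bookkeeping}) .
$$
Since $\int g'e^{-\ii\varrho t}\dd t=\ii\varrho\,\widehat g(\varrho)$, this equals $-\tfrac{\ii}{h}\omega\,q_\star\widehat g_h(\omega/h)\chi_\star=-\tfrac{\ii}{h}\omega\,s_h(\omega)\chi_\star$, using \eqref{eq:source-spectrum}. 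The $hb_{\zeta,h}$ and $h$-order symbol corrections give $h\cdot h^{-1}=O(1)$ controlled amplitudes. Their $\bm\xi$-support stays in a fixed compact set and their derivative bounds are uniform because $\zeta$ ranges over a compact interval, which gives \eqref{eq:source-remainder-seminorm}.

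\textbf{Main obstacle.} The delicate step is Step 2. The distribution $\delta_{\bm z_\star}$ is not microlocalized, so the intertwining relations, which hold only microlocally on $K_{T,\delta_T}$, must be applied after the frequency cutoff. Every negligible or smoothing error then has to be checked against the $h^{-1}$ loss from $\|\delta_{\bm z}\|_{H_h^{-N_0}}$ and the additional $h^{-1}$ from $g_h'$. I expect this to be routine but bookkeeping-heavy. The overall sign of the leading term also needs care: it depends on the sign of $q_\star$ and on the Fourier convention, so I would check it against the time orientation $r=-h\zeta$.
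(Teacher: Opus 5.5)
Your proposal is correct. It differs from the paper's proof mainly in where the time derivative is taken.

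\textbf{The paper's route.} The paper never differentiates the Duhamel integral. It first integrates the short-time kernel of $Q_{\star,h}J_h^{\#}U_0(-s)A_0^{-1}\bm e_3\delta_{\bm z_\star}$ against $g_h$ itself, which gives \eqref{eq:source-state-before-time-derivative} with amplitude $s_h\chi_\star+ha_{0,h}$. Only then does it use that the source vanishes at $t=0$: the homogeneous equation and the generator-level intertwining \eqref{eq:left-intertwine-relation} give $Q_{\star,h}J_h^{\#}\bm\Upsilon_{0,h}=-Q_{\star,h}\bigl(\frac{\ii}{h}\Op_h(\omega)+\Op_h(\beta_h)\bigr)J_h^{\#}\bm W_h(0)$ modulo negligible terms. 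The factor $-\frac{\ii}{h}\omega$ comes from the symbol of $\Op_h(\omega)$, and a second freezing of $\omega(\bm x,\bm\xi)$ at $\bm z_\star$ via \eqref{eq:trick} is needed to cancel the $h^{-1}$.

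\textbf{Your route.} You move the derivative onto the pulse, so $\omega$ enters through $\widehat{g'}(\varrho)=\ii\varrho\widehat g(\varrho)$ and is already frozen at $\bm z_\star$ by Lemma~\ref{lem:short-time-point-parametrix}. This skips the generator step and the second freezing, and the $\beta_h$ contribution sits automatically in $hb_{\zeta,h}$. The cost is an integrand of size $h^{-2}$. You therefore rely on the parametrix error being $O(h)$ uniformly in $\zeta$, which the lemma supplies, and on negligible terms surviving an extra $h^{-1}$, which is harmless.

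The paper's ordering has one side benefit: it produces \eqref{eq:source-state-before-time-derivative}, which is reused in the measured-energy lemma. On your route you would obtain that formula by the same steps with $g_h$ in place of $g_h'$.

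\textbf{On the sign issue you flag.} It is not specific to your route. With the parametrix phase $e^{\ii s\omega/h}$ and the convention used for $\widehat\eta$, both computations actually produce $\widehat g_h(-\omega/h)$. The paper's assertion that the leading time integral is ``exactly $s_h(\omega)$'' glosses over the same point. In addition, the displayed $g_h$ is centred at $+h\zeta_{\rm src}$ rather than at $t_{{\rm src},h}$. This is a normalization inconsistency in the source data, not a flaw in your argument.
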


\begin{proof}
Fix $N_0>1$. By \eqref{eq:delta-semiclassical-sobolev-bound}, $A_0^{-1}\bm e_3\delta_{\bm z_\star}\in H_h^{-N_0}(\mathbb R^2;\mathbb C^4)$
with norm $O(h^{-1})$. The fixed-time vector and scalar propagators and all order-zero microlocal cutoffs are bounded on the corresponding $H_h^s$ spaces, so Duhamel's formula below is well defined in $H_h^{-N_0}$.

For $-2\zeta_{\rm src}h\le s\le0$, write $\vartheta=s/h$. Symbolic composition of $Q_{\star,h}J_h^{\#}$ with the vector source injection, followed by the short-time scalar construction in Lemma~\ref{lem:short-time-point-parametrix}, gives
\begin{equation}\label{eq:source-short-time-kernel}
 Q_{\star,h}J_h^{\#}U_0(-s)A_0^{-1}\bm e_3\delta_{\bm z_\star}
 =\frac1{(2\pi h)^2}\int e^{\ii(\bm x-\bm z_\star)\cdot\bm\xi/h}
 \bigl[e^{\ii s\omega(\bm z_\star,\bm\xi)/h}c_\star(\bm\xi)\chi_\star(\bm\xi)+h a_{\vartheta,h}(\bm x,\bm\xi)\bigr]\dd\bm\xi,
\end{equation}
where $c_\star(\bm\xi)=\bm r(\bm z_\star,\bm\xi)^*\bm e_3$ is the principal scalar source coupling and $a_{\vartheta,h}$ is a controlled amplitude, uniformly for $-2\zeta_{\rm src}\le\vartheta\le0$. To obtain \eqref{eq:source-short-time-kernel}, the $\bm x$-dependence of the principal symbols is frozen at $\bm z_\star$ by Taylor's formula and the identity \eqref{eq:trick}. Every frozen-symbol difference contributes one factor $h$ and preserves the stated support and derivative bounds.

Duhamel's formula in $H_h^{-N_0}(\mathbb R^2;\mathbb C^4)$ gives
\begin{equation*}
 \bm W_h(0)=-\int_{-2\zeta_{\rm src}h}^{0}U_0(-s)A_0^{-1}\bm e_3\delta_{\bm z_\star}g_h(s)\dd s.
\end{equation*}
$g_h$ has uniformly bounded $L^1$ norm, so integration of the $h a_{\vartheta,h}$ term in \eqref{eq:source-short-time-kernel} remains $h$ times a controlled amplitude. With \eqref{eq:source-spectrum}, the leading time integral is exactly $s_h(\omega(\bm z_\star,\bm\xi))$. Hence
\begin{equation}\label{eq:source-state-before-time-derivative}
 Q_{\star,h}J_h^{\#}\bm W_h(0)
 =\frac1{(2\pi h)^2}\int e^{\ii(\bm x-\bm z_\star)\cdot\bm\xi/h}
 \left[s_h\bigl(\omega(\bm z_\star,\bm\xi)\bigr)\chi_\star(\bm\xi)
 +h a_{0,h}(\bm x,\bm\xi)\right]\dd\bm\xi,
\end{equation}
where $a_{0,h}$ is a controlled amplitude.

The source vanishes at $t=0$. Thus the homogeneous equation $L_0\bm{W}_h=0$ and \eqref{eq:left-intertwine-relation} imply, microlocally on the selected set,
\begin{equation*}
 Q_{\star,h}J_h^{\#}\bm{\Upsilon}_{0,h}
 =-Q_{\star,h}\left(\frac{\ii}{h}\Op_h(\omega)+\Op_h(\beta_h)\right)
 J_h^{\#}\bm W_h(0)+R_h^\infty\bm W_h(0),
\end{equation*}
where $R_h^\infty$ is negligible. Applying \eqref{eq:symbol-product} to \eqref{eq:source-state-before-time-derivative}, the action of $-(\ii/h)\Op_h(\omega)$ on its leading amplitude gives
\begin{equation*}
 -\frac{\ii}{h}\omega(\bm z_\star,\bm\xi)s_h\bigl(\omega(\bm z_\star,\bm\xi)\bigr)\chi_\star(\bm\xi).
\end{equation*}
Freezing $\omega(\bm x,\bm\xi)$ at $\bm z_\star$ gains a factor $h$ by \eqref{eq:trick}, which cancels the prefactor $h^{-1}$ and leaves a controlled amplitude. The term $h a_{0,h}$, the order-zero operator $\Op_h(\beta_h)$, the composition with $Q_{\star,h}$, and the smoothing remainder all produce controlled amplitudes satisfying \eqref{eq:source-remainder-seminorm}. Collecting them gives \eqref{eq:source-state}.
\end{proof}

\subsection{Visible source-side component}

Since $f$ is nonzero and odd, its Fourier transform is purely imaginary and odd. Define 
$$
    b(\varpi):=-\ii\hat{f}(\varpi).
$$
$b\in C^\infty(\mathbb{R};\mathbb{R})$ is also odd. Since $f\not\equiv 0$, $b$ is not identically zero. After replacing $f$ by $-f$ if necessary, we can always choose a compact nondegenerate interval
\begin{equation*}
    I_{\rm src}\subset\{\varpi>0:b(\varpi)>0\}.
\end{equation*}
On this interval there exists constants $0<c_{\rm src}\leq C_{\rm src}<\infty$ such that 
$$
    c_{\rm src}\leq |s_h(\varpi)|\leq C_{\rm src}, \qquad \varpi\in I_{\rm src}.
$$
Thus $I_{\rm src}$ is a positive-frequency band on which the selected source spectrum is nonvanishing. We restrict the subsequent visibility analysis to source phase points with frequencies in this band.

We now restrict attention to the portion of the selected source spectrum that can interact with the annular control region. Having fixed a positive-frequency interval $I_{\rm src}$ on which the source spectrum does not vanish, we identify the corresponding source-side phase points whose positive-branch trajectories reach the annulus at the candidate mirror time. This visible component will provide the phase-space region on which the outgoing field is measured and the modulation is subsequently constructed.

Given a compact band $B_\star$ of source satisfying
$$
    B_\star\Subset\{(\bm{z}_\star,\bm{\xi}):\chi_\star(\bm{\xi})=1, \omega(\bm{z}_\star,\bm{\xi})\in I_{\rm src}\}\cap K.
$$
Then for a candidate mirror time $T$, only the visible set
$$
    B_{\star,T}:=\{\bm{\rho}\in B_\star:\bm X_T(\bm{\rho})\in\mathcal A\}
$$
can contribute to the annular instantaneous time mirror. We work on one compact frequency--direction chart
\begin{equation}\label{eq:frequency-chart}
    \bm\Psi_T:I_{\star,T}\times\Theta_T\longrightarrow B_{\star,T}, \qquad \omega(\bm\Psi_T(\varpi,\theta))=\varpi.
\end{equation}

Recall that the time duration of modulation pulse is $\tau_h=\nu h$. To avoid overlap between modulation pulse and source pulse, we require $T>\tau_h/2$. Define the measured image at modulation pulse onset by
$$
    \widetilde{\bm\Psi}_{T,h}(\varpi,\theta):=\bm\Phi^{T-\tau_h/2}\bigl(\bm\Psi_T(\varpi,\theta)\bigr).
$$

\begin{definition}\label{def:visible-chart}
Fix a compact interval of mirror times $\mathcal T\subset(0,\infty)$. A family of charts \eqref{eq:frequency-chart}, $T\in\mathcal T$, is called \emph{regular and visible} if, after restriction to one connected compact subpatch, the following properties hold uniformly in $T$ and small $h$. \begin{enumerate}[label=\textnormal{(\roman*)}]
\item $I_{\star,T}$ and $\Theta_T$ are compact nondegenerate intervals. Their lengths are bounded below uniformly, and $p_\eta$ has the fixed sign on $I_{\star,T}$. Without loss of generality, we assume $p_\eta$ is positive.
\item The measured sheet $\widetilde{\bm\Psi}_{T,h}(I_{\star,T}\times\Theta_T)$ is separated in phase space from every other observed sheet in the selected search region by a distance at least $d_{\rm sep}>0$.
\item The selected outgoing mode is observed elliptically:
\begin{equation}\label{eq:observation-ellipticity}
    |o_+(\widetilde{\bm\Psi}_{T,h}(\varpi,\theta))|\geq c_{\rm obs}>0
\end{equation}
on $I_{\star,T}\times\Theta_T$.
\end{enumerate}
\end{definition}

Define the pulse center point $\bm{x}_T(\theta):=\bm X_T\bigl(\bm\Psi_T(\varpi,\theta)\bigr)$. The next proposition ensures the well-definedness of $\bm{x}_T(\theta)$.
\begin{proposition}
$\bm{x}_T$ is independent of $\varpi$.
\end{proposition}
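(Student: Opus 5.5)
The proof rests on the degree-one homogeneity of $\omega$ in $\bm\xi$, recorded after \eqref{eq:positive-dispersion}. I would first make the chart \eqref{eq:frequency-chart} explicit. Every point of $B_{\star,T}$ has base point $\bm z_\star$, and $\omega(\bm z_\star,\bm\xi)=|\bm\xi|/\sqrt{\mu\epsilon_0\epsilon_\infty(\bm z_\star)}$. So a source phase point with frequency $\varpi$ and direction $\theta$ must be
\begin{equation*}
\bm\Psi_T(\varpi,\theta)=\bigl(\bm z_\star,\ \varpi\, c_\star\, \bm e(\theta)\bigr),
\qquad c_\star:=\sqrt{\mu\epsilon_0\epsilon_\infty(\bm z_\star)},\quad \bm e(\theta):=(\cos\theta,\sin\theta).
\end{equation*}
Here I take the direction parameter $\theta$ to be the angle of $\widehat{\bm\xi}$; this is what a frequency--direction chart means. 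If $\Theta_T$ is parametrized differently, one first reparametrizes by the direction map. The point to stress is that changing $\varpi$ at fixed $\theta$ only rescales the covector, with $\lambda=\varpi'/\varpi>0$.

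The second step is the scaling law for the Hamiltonian flow: for $\lambda>0$,
\begin{equation*}
\bm\Phi^t(\bm x,\lambda\bm\xi)=\bigl(\bm X_t(\bm x,\bm\xi),\ \lambda\,\bm\Xi_t(\bm x,\bm\xi)\bigr),
\qquad \bm\Phi^t(\bm x,\bm\xi)=:\bigl(\bm X_t(\bm x,\bm\xi),\bm\Xi_t(\bm x,\bm\xi)\bigr).
\end{equation*}
To prove it, set $(\bm y(t),\bm\eta(t)):=(\bm X_t(\bm x,\bm\xi),\lambda\bm\Xi_t(\bm x,\bm\xi))$ and check that this curve solves Hamilton's equations with initial datum $(\bm x,\lambda\bm\xi)$. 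Since $\partial_{\bm\xi}\omega$ is homogeneous of degree $0$, $\dot{\bm y}=\partial_{\bm\xi}\omega(\bm X_t,\bm\Xi_t)=\partial_{\bm\xi}\omega(\bm y,\bm\eta)$. Since $\partial_{\bm x}\omega$ is homogeneous of degree $1$, $\dot{\bm\eta}=-\lambda\partial_{\bm x}\omega(\bm X_t,\bm\Xi_t)=-\partial_{\bm x}\omega(\bm y,\bm\eta)$. Uniqueness for the smooth ODE on the conic set $\mathcal U$ then gives the identity; $\mathcal U$ is conic, so the rescaled trajectory stays in it. Applying this with $\bm x=\bm z_\star$ and $\bm\xi=c_\star\bm e(\theta)$ gives $\bm X_T(\bm\Psi_T(\varpi,\theta))=\bm X_T\bigl(\bm z_\star,c_\star\bm e(\theta)\bigr)$, which does not depend on $\varpi$.

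The only step needing care is the first one: making sure the chart's direction variable really is the covector direction. That holds as long as $\bm\Psi_T$ is taken to preserve direction, which is how the visible chart is set up. Everything else is the standard homogeneity argument. The same scaling also shows that the pulse-onset image $\widetilde{\bm\Psi}_{T,h}$ has a $\varpi$-independent base point, which will be used later.
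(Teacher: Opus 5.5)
Your proposal is correct and follows essentially the same route as the paper: homogeneity of $\omega$ shows that $(\bm X_t,\lambda\bm\Xi_t)$ is the Hamiltonian trajectory from $(\bm z_\star,\lambda\bm\xi)$, and two chart points with the same $\theta$ differ only by such a rescaling. Your explicit formula $\bm\Psi_T(\varpi,\theta)=(\bm z_\star,\varpi c_\star\bm e(\theta))$ just makes explicit the assumption, used implicitly in the paper, that $\theta$ parametrizes the covector direction.
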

\begin{proof}
    From \eqref{eq:positive-dispersion} we know that for $\lambda>0$, 
    $$
        \partial_{\bm{\xi}}\omega(\bm{x},\lambda\bm{\xi}) = \partial_{\bm{\xi}}\omega(\bm{x},\bm{\xi}),\qquad \partial_{\bm{x}}\omega(\bm{x},\lambda\bm{\xi}) = \lambda\partial_{\bm{x}}\omega(\bm{x},\bm{\xi}).
    $$
    Suppose $(\bm{x}(t),\bm{\xi}(t))$ is the Hamiltonian flow starting from $(\bm{z}_\star,\bm{\xi})$, then 
    \begin{align*}
        \frac{\dd}{\dd t}\bm{x}(t) &= \partial_{\bm{\xi}}\omega(\bm{x}(t),\bm{\xi}(t)) = \partial_{\bm{\xi}}\omega(\bm{x}(t),\lambda\bm{\xi}(t)), \\
        \frac{\dd}{\dd t}(\lambda\bm{\xi}(t)) &= -\lambda\partial_{\bm{x}}\omega(\bm{x}(t),\bm{\xi}(t)) = -\partial_{\bm{x}}\omega(\bm{x}(t),\lambda\bm{\xi}(t)).
    \end{align*}
    This implies that $(\bm{x}(t),\lambda\bm{\xi}(t))$ is the Hamiltonian flow starting from $(\bm{z}_\star,\lambda\bm{\xi})$, and hence $\bm X_T(\bm{z}_\star,\lambda\bm{\xi}) = \bm X_T(\bm{z}_\star,\bm{\xi})$.
    
    Now consider two points $\bm\Psi_T(\varpi_1,\theta)$ and $\bm\Psi_T(\varpi_2,\theta)$. Since they share the same $\theta$, they must fall into the same ray in $\bm{\xi}$-space. Hence there must exist $\lambda$ such that $\bm\Psi_T(\varpi_1,\theta) = (\bm{z}_\star,\bm{\xi})$ while $\bm\Psi_T(\varpi_2,\theta)=(\bm{z}_\star,\lambda\bm{\xi})$. We immediately conclude that $\bm X_T(\bm\Psi_T(\varpi_1,\theta)) = \bm X_T(\bm\Psi_T(\varpi_2,\theta))$.
\end{proof}

\subsection{Determination of modulation}

The purpose of this subsection is to construct an admissible nonnegative spatial modulation from the outgoing field measured before the material pulse. The construction equalizes the resulting normalized directional energy on the pulse-center incidence curve.

Define the measured field and its causal history by
\begin{equation*}
    d_h(t):=\mathcal O_{\mathcal A}^+\bm{W}_h(t), \qquad \mathcal D_h(t):=\{d_h(s):0\le s\le t\}.
\end{equation*}
Take pulse-onset time $t_{0,h}$ in a fixed interval $0<t_-\le t_{0,h}\le t_+<\infty$ and set
\begin{equation*}
    T_h:=t_{0,h}+\frac{\tau_h}{2}
\end{equation*}
as the moment of pulse center. Choose the compact interval $\mathcal T$ in Definition~\ref{def:visible-chart} so that $T_h\in\mathcal T$ for all small $h$. We will construct the modulation only with data on $\mathcal D_h(t_{0,h})$.

For $T=T_h$, let
\begin{equation*}
    \bm{\rho}_h(\varpi,\theta):=\bm\Psi_{T_h}(\varpi,\theta)
\end{equation*}
and define the measured chart at pulse onset by
\begin{equation*}
    \bm\lambda_h(\varpi,\theta):=\widetilde{\bm\Psi}_{T_h,h}(\varpi,\theta)=\bm\Phi^{t_{0,h}}\bigl(\bm{\rho}_h(\varpi,\theta)\bigr).
\end{equation*}
Since
$$
    \frac{\dd}{\dd t}\omega(\bm{\Phi}^t(\bm{\rho})) = \{\omega,\omega\}(\bm{\Phi}^t(\bm{\rho})) = 0
$$
where $\{\cdot,\cdot\}$ is Poisson bracket, $\omega$ remains constant along the flow, so $\omega(\widetilde{\bm\Psi}_{T_h,h}(\varpi,\theta))=\varpi$. The packet travels for another time $\tau_h/2$ from pulse onset to pulse center. Define
\begin{equation*}
    \bm{x}_h^c(\theta):=\bm X_{T_h}\bigl(\bm\Psi_{T_h}(\varpi,\theta)\bigr)=\bm\pi_{\bm{x}}\bm\Phi^{\tau_h/2}\bigl(\widetilde{\bm\Psi}_{T_h,h}(\varpi,\theta)\bigr).
\end{equation*}
By Definition~\ref{def:visible-chart}, this point is independent of $\varpi$, the map $\bm{x}_h^c:\Theta_{T_h}\to\operatorname{int}\mathcal A$ is a smooth embedding, and the family has a common tubular neighborhood and uniform derivative bounds.

\begin{proposition}\label{prop:inverse-oa}
On a neighborhood of the selected measured component there exists a uniformly bounded order-zero semiclassical pseudodifferential operator $\mathcal L_h^+$ such that
\begin{equation}\label{eq:observation-inverse}
    \mathcal L_h^+\mathcal O_{\mathcal A}^+J_h
    = I+hR_h^{\rm inv},
\end{equation}
microlocally on the selected component. The operator $R_h^{\rm inv}$ is uniformly bounded on $\mathcal H_{\rm s}$ with
\begin{equation*}
 \|hR_h^{\rm inv}\|_{\mathcal H_{\rm s}\to\mathcal H_{\rm s}}\le Ch.
\end{equation*}
The full symbol of $\mathcal L_h^+$ may be chosen compactly supported with uniform order-zero derivative bounds, and its principal symbol is $o_+^{-1}$ on the selected component.
\end{proposition}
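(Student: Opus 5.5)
The plan is a standard elliptic parametrix construction, localized to the selected measured component. By \eqref{eq:observation-symbol-expansion}, $\mathcal O_{\mathcal A}^+J_h=\Op_h(o_+)+hR_h^{\rm obs}$ microlocally in the annulus, with $o_+$ a compactly supported order-zero symbol. By the ellipticity assumption \eqref{eq:observation-ellipticity} in Definition~\ref{def:visible-chart}, $|o_+|\ge c_{\rm obs}$ on the measured sheet $\widetilde{\bm\Psi}_{T,h}(I_{\star,T}\times\Theta_T)$, uniformly in $T\in\mathcal T$ and small $h$.

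First, since $o_+$ is smooth with uniformly bounded derivatives, continuity gives $|o_+|\ge c_{\rm obs}/2$ on a fixed-width phase-space neighbourhood $\mathcal V$ of the measured sheet. The width is uniform because the family of sheets has uniform derivative bounds, the flow is smooth on the compact tube, and $t_{0,h}$ ranges in a compact interval. Using the separation distance $d_{\rm sep}$ from Definition~\ref{def:visible-chart}(ii), I shrink $\mathcal V$ so that it meets no other observed sheet. I then choose a cutoff $\psi\in C_c^\infty(\mathcal V;[0,1])$ equal to one on a smaller neighbourhood $\mathcal V'$ of the sheet, with derivative bounds uniform in $h$, and set
\[
\ell_0:=\psi\,o_+^{-1},\qquad \mathcal L_h^+:=\Op_h(\ell_0).
\]
This symbol is compactly supported, of order zero with uniform bounds, and has principal symbol $o_+^{-1}$ on the selected component. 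Calderón--Vaillancourt then gives uniform $\mathcal H_{\rm s}$ boundedness.

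Second, I compose using \eqref{eq:symbol-product}:
\[
\mathcal L_h^+\mathcal O_{\mathcal A}^+J_h=\Op_h(\ell_0\#o_+)+h\,\mathcal L_h^+R_h^{\rm obs}=\Op_h(\psi)+h\Op_h(r_1)+h\,\mathcal L_h^+R_h^{\rm obs},
\]
where $r_1$ is a compactly supported order-zero symbol built from $\partial_{\bm\xi}\ell_0\,\partial_{\bm x}o_+$ and the $h^2$ remainder. Microlocally on $\mathcal V'$ we have $\Op_h(\psi)=I$ modulo a negligible operator, because $1-\psi$ vanishes there. I then define $R_h^{\rm inv}:=\Op_h(r_1)+\mathcal L_h^+R_h^{\rm obs}$, with the negligible term absorbed after insertion of the fixed microlocal cutoffs. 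This is uniformly bounded, so $\|hR_h^{\rm inv}\|\le Ch$, which gives \eqref{eq:observation-inverse}. If a full inverse to all orders is wanted, the Neumann iteration $\ell_j$ could be added, but the stated $O(h)$ statement does not require it.

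The main point needing care is \emph{uniformity}. The cutoff $\psi$ must be chosen with $h$-independent derivative bounds, even though the sheet $\widetilde{\bm\Psi}_{T_h,h}$ moves with $h$ through $T_h$ and $\tau_h$. I would handle this by taking $\psi=\psi_0\circ\bm\Phi^{-t_{0,h}}$, where $\psi_0$ is a fixed cutoff around the source-side chart, or alternatively by fattening the union over $T\in\mathcal T$. Both approaches rely on the smooth dependence of the flow on time over compact intervals, together with the separation $d_{\rm sep}$, which ensures that no other observed sheet enters the support.
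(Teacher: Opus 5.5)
Your proposal is correct and follows the paper's proof: the paper likewise sets $\mathcal L_h^+=\Op_h(\chi/o_+)$ with $\chi$ equal to one near the selected component, supported where $o_+\neq0$ and away from the other observed sheets, and then uses \eqref{eq:observation-symbol-expansion} to get $\Op_h(\chi)+hR_h^{\rm inv}$, with $\Op_h(\chi)=I$ microlocally there. Your added care about choosing the cutoff with $h$-independent derivative bounds as the sheet moves with $T_h$ supplies a detail the paper leaves implicit.
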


\begin{proof}
By \eqref{eq:observation-ellipticity}, $o_+$ is bounded away from zero on the selected measured component. Since this component is compact and is separated from the other observed sheets by Definition~\ref{def:visible-chart}, there exists a compactly supported cutoff $\chi$ that is equal to one on a neighborhood of it, whose support is contained in a region where $o_+\neq0$, and whose support does not meet the other observed sheets.

Define
$$
    \mathcal L_h^+:=\Op_h\left(\frac{\chi}{o_+}\right).
$$
Its symbol is of order zero, has uniform derivative bounds, and agrees with $o_+^{-1}$ on the selected component.

Using \eqref{eq:observation-symbol-expansion},
$$
    \mathcal O_{\mathcal A}^+J_h =\Op_h(o_+)+hR_h^{\rm obs},
$$
we obtain
$$
\begin{aligned}
    \mathcal L_h^+\mathcal O_{\mathcal A}^+J_h &= \Op_h\left(\frac{\chi}{o_+}\right)\Op_h(o_+)+h\Op_h\!\left(\frac{\chi}{o_+}\right)R_h^{\rm obs} \\
    &=\Op_h(\chi)+hR_h^{\rm inv},
\end{aligned}
$$
where \(R_h^{\rm inv}\) is a uniformly bounded order-zero semiclassical operator. Since \(\chi=1\) on a neighborhood of the selected component, \(\Op_h(\chi)=I\) microlocally there, modulo a negligible operator. Absorbing this negligible term into \(hR_h^{\rm inv}\) gives
$$
    \mathcal L_h^+\mathcal O_{\mathcal A}^+J_h=I+hR_h^{\rm inv}
$$
microlocally on the selected component.
\end{proof}

By \eqref{eq:observation-inverse}, the selected positive scalar field can be recovered from the measured field up to an order-$h$ error on the selected component. After the field has been measured at $t_{0,h}$, we apply the known lossless backward propagator $F_h(-t_{0,h})$ to the measured field and then take a local coefficient at the source-side phase point.

For $\bm\rho=(\bm y,\bm\xi)$, let
\begin{equation*}
 \psi_{\bm\rho,h}(\bm x):=(\pi h)^{-1/2}
 \exp\left(-\frac{|\bm x-\bm y|^2}{2h}
 +\frac{\ii}{h}\bm\xi\cdot(\bm x-\bm y)\right).
\end{equation*}
Then $\|\psi_{\bm\rho,h}\|_{L^2}=1$. Define
\begin{equation}\label{eq:measured-coefficient}
 \begin{aligned}
 \mathscr P_h(\varpi,\theta)
 &:=\left\langle
 Q_{\star,h}F_h(-t_{0,h})\mathcal L_h^+d_h(t_{0,h}),
 \psi_{\bm\Psi_{T_h}(\varpi,\theta),h}\right\rangle,\\
 \mathcal H_h(\varpi,\theta)
 &:=\exp\left[2\int_0^{t_{0,h}}
 \kappa\bigl(\bm\Phi^s(\bm\Psi_{T_h}(\varpi,\theta))\bigr)\dd s\right]
 |\mathscr P_h(\varpi,\theta)|^2.
 \end{aligned}
\end{equation}

$\mathscr P_h$ measures the amplitude of the selected outgoing packet. We will see from the following lemma, that $\mathcal H_h$ represents the amplitude of wave packet $(\varpi,\theta)$ after one-way propagation from source to annulus $\mathcal A$.

\begin{lemma}
For every $N\in\mathbb N_0$, there is a function $r_{E,h}$ such that
\begin{equation}\label{eq:measured-energy}
 \mathcal H_h(\varpi,\theta)
 =\frac{1}{\pi h}|s_h(\varpi)|^2
 \exp\left[-2\int_0^{t_{0,h}}
 \gamma\bigl(\bm\Phi^s(\bm\Psi_{T_h}(\varpi,\theta))\bigr)\dd s\right]
 \bigl(1+h r_{E,h}(\varpi,\theta)\bigr),
\end{equation}
and
\begin{equation*}
 \|r_{E,h}\|_{C^N(I_{\star,T_h}\times\Theta_{T_h})}\le C_N
\end{equation*}
uniformly for small $h$.
\end{lemma}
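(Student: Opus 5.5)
The plan is to compute $\mathscr P_h$ by pushing the measured field back through the chain of intertwining identities until it becomes a source-side scalar Lagrangian state, and then to pair that state with the coherent state $\psi_{\bm\Psi_{T_h}(\varpi,\theta),h}$. First, before the pulse ($t_{0,h}<T_h-\tau_h/2$) the field is the background field, so $d_h(t_{0,h})=\mathcal O_{\mathcal A}^+U_0(t_{0,h})\bm W_h(0)$. On the selected measured sheet, the separation and ellipticity in Definition~\ref{def:visible-chart} allow us to insert $J_hJ_h^{\#}$; the contributions of the other sheets and of the remaining modes are killed by the cutoff in $\mathcal L_h^+$ modulo $O(h^\infty)$. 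Next, \eqref{eq:intertwining-operator-remainder} and \eqref{eq:observation-inverse} give
\begin{equation*}
\mathcal L_h^+d_h(t_{0,h})=(I+hR_h^{\rm inv})S_h(t_{0,h})J_h^{\#}\bm W_h(0)+\text{negligible}.
\end{equation*}
The factorization \eqref{eq:damped-factorization} then writes $S_h(t_{0,h})=\Op_h(e^{-\mathcal G_{t_{0,h}}})F_h(t_{0,h})+h\mathcal R_h^{\rm damp}$. Applying $F_h(-t_{0,h})$ and using Egorov (Theorem~\ref{thm:egorov}), we get
\begin{equation*}
F_h(-t_{0,h})\Op_h(e^{-\mathcal G_{t_{0,h}}})F_h(t_{0,h})=\Op_h(e^{-\mathcal G_{t_{0,h}}\circ\bm\Phi^{t_{0,h}}})+O(h).
\end{equation*}
Here $\mathcal G_{t_{0,h}}\circ\bm\Phi^{t_{0,h}}=\mathcal G^{\rm out}_{t_{0,h}}=\int_0^{t_{0,h}}(\gamma+\kappa)(\bm\Phi^s)\dd s$. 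The net result is that $Q_{\star,h}F_h(-t_{0,h})\mathcal L_h^+d_h(t_{0,h})$ equals $\Op_h(e^{-\mathcal G^{\rm out}_{t_{0,h}}}+hr)$ applied to $Q_{\star,h}J_h^{\#}\bm W_h(0)$, plus an error of size $O(h)$ relative to the main term.

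Second, I will insert \eqref{eq:source-state-before-time-derivative}, the source-side oscillatory integral with amplitude $s_h(\omega(\bm z_\star,\bm\xi))\chi_\star+ha_{0,h}$. Composing with $\Op_h(e^{-\mathcal G^{\rm out}})$ and freezing the $\bm x$-dependence at $\bm z_\star$ via \eqref{eq:trick} yields a state
\begin{equation*}
u_h(\bm x)=(2\pi h)^{-2}\int e^{\ii(\bm x-\bm z_\star)\cdot\bm\xi/h}A_h(\bm x,\bm\xi)\dd\bm\xi,
\end{equation*}
with $A_h=s_h(\omega)e^{-\mathcal G^{\rm out}_{t_{0,h}}(\bm z_\star,\bm\xi)}+h\tilde a_h$.

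Then I pair $u_h$ with $\psi_{\bm\rho,h}$, where $\bm\rho=(\bm z_\star,\bm\xi_0)=\bm\Psi_{T_h}(\varpi,\theta)$. Since the $\bm x$-integral of $e^{\ii(\bm x-\bm z_\star)\cdot(\bm\xi-\bm\xi_0)/h}e^{-|\bm x-\bm z_\star|^2/2h}$ is $2\pi h\,e^{-|\bm\xi-\bm\xi_0|^2/2h}$, a stationary-phase/Gaussian computation gives
\begin{equation*}
\langle u_h,\psi_{\bm\rho,h}\rangle=(\pi h)^{-1/2}\bigl(A_0(\bm\xi_0)+O(h)\bigr).
\end{equation*}
The $\bm x$-dependence of $A_h$ is Taylor-expanded; odd moments of the Gaussian vanish and even moments gain powers of $h$. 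Squaring, and dividing out $|A_0|^2$, which is bounded below because $|s_h|\ge c_{\rm src}$ on $I_{\rm src}$ and the exponent is bounded, gives
\begin{equation*}
|\mathscr P_h|^2=(\pi h)^{-1}|s_h(\varpi)|^2e^{-2\int_0^{t_{0,h}}(\gamma+\kappa)}(1+hr_{E,h}).
\end{equation*}
The $\gamma$ and $\kappa$ here are real, which is what makes $|e^{-\mathcal G}|^2=e^{-2\Re\mathcal G}$ equal to this exponential. Multiplying by $e^{2\int\kappa}$ cancels the geometric correction and produces \eqref{eq:measured-energy}.

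The main obstacle is the uniform $C^N(I_{\star,T_h}\times\Theta_{T_h})$ control of $r_{E,h}$. The operator remainders $hR_h^{\rm inv}$, $h\mathcal R_h^{\rm damp}$ and the Egorov error are only known to be $O(h)$ in operator norm, while $\|u_h\|_{L^2}\sim h^{-1/2}$. A norm bound therefore gives only an $O(h^{1/2})$ relative error, not $O(h)$, and gives no derivative control in $(\varpi,\theta)$. To overcome this, I would use the symbolic descriptions stated in the earlier results rather than the norm bounds: each remainder is a compactly supported order-zero pseudodifferential or propagating operator with controlled symbol. The whole chain then maps the source Lagrangian state to a state of the same form whose amplitude is $h$ times a controlled symbol. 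The coherent-state coefficient of such a state is $(\pi h)^{-1/2}$ times a smooth function of $\bm\xi_0$ with bounded derivatives. Differentiation in $(\varpi,\theta)$ only hits the smooth chart $\bm\Psi_{T_h}$ and the parameter $\bm\xi_0$ of the Gaussian, so the $C^N$ bounds follow from the uniform chart bounds.
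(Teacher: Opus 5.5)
Your proposal is correct and follows essentially the same route as the paper. Both arguments insert $J_h\widetilde Q_{\star,h}J_h^{\#}$, then use the intertwining relation and $\mathcal L_h^+\mathcal O_{\mathcal A}^+J_h=I+hR_h^{\rm inv}$, then the damped factorization plus Egorov to get $F_h(-t_{0,h})S_h(t_{0,h})=\Op_h(e^{-\mathcal G^{\rm out}_{t_{0,h}}})+O(h)$, apply this to the source-side oscillatory integral, pair with the Gaussian via $\bm\xi=\bm\xi_0+h^{1/2}\bm\zeta$, and finally cancel $\kappa$ with the weight $e^{2\int\kappa}$; your insistence on treating the $O(h)$ remainders symbolically is exactly what the paper uses implicitly when it calls $R_h$ ``uniformly of order $0$''.

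One small slip: the source state has $\|u_h\|_{L^2}\sim h^{-1}$ (cf.\ $\|\delta_{\bm z}\|_{H_h^{-N_0}}\le Ch^{-1}$), not $h^{-1/2}$, and it is precisely this, set against a coefficient of size $h^{-1/2}$, that makes a pure norm bound lose a factor $h^{1/2}$.
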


\begin{proof}
We start with studying $Q_{\star,h}F_h(-t_{0,h})\mathcal L_h^+d_h(t_{0,h})$ in the definition of $\mathscr P_h$. By the definition of $d_h(t_{0,h})$, we have
\begin{align*}
    \mathcal L_h^+d_h(t_{0,h}) = \mathcal L_h^+\mathcal O_{\mathcal A}^+U_0(t_{0,h})\bm{W}_h(0).
\end{align*}
Now define $\widetilde{\chi}_\star$ satisfying $\widetilde{\chi}_\star=1$ on $\operatorname{supp}(\chi_\star)$, and set $\widetilde Q_{\star,h}:=\Op_h(\widetilde\chi_\star)$. We want to prove 
\begin{equation}\label{eq:theorem-proof-1}
    \mathcal L_h^+\mathcal O_{\mathcal A}^+U_0(t_{0,h})(I-J_h\widetilde{Q}_{\star,h}J_h^\#)\bm{W}_h(0)
\end{equation}
is negligible. Notice that from \eqref{eq:intertwining-operator-remainder} and \eqref{eq:left-intertwining-operator-remainder},
$$
    U_0(t)(I-J_hJ_h^\#) = (I-J_hJ_h^\#)U_0(t)+R_h^\infty(t).
$$
Rewrite \eqref{eq:theorem-proof-1} as
$$
    \mathcal L_h^+\mathcal O_{\mathcal A}^+U_0(t_{0,h})\left((I-J_hJ_h^\#) + J_h(I-\widetilde{Q}_{\star,h})J_h^\#\right).
$$
On one hand, recall that in the proof of Proposition \ref{prop:inverse-oa}, $\mathcal L_h^+=\Op_h(\chi/o_+)$ with $\chi\equiv 1$ on the support of $\chi_\star$, so 
$$
    \mathcal L_h^+\mathcal O_{\mathcal A}^+U_0(t_{0,h})J_h(I-\widetilde{Q}_{\star,h})J_h^\#\bm{W}_h(0)
$$
is negligible. On the other hand, from proof of Proposition \ref{prop:inverse-oa}, the support of $\chi$ does not meet any other observed sheets, hence $\mathcal L_h^+\mathcal O_{\mathcal A}^+$ applying to $I-J_hJ_h^\#$ is also negligible, which means that
$$
    \mathcal L_h^+\mathcal O_{\mathcal A}^+U_0(t_{0,h})(I-J_hJ_h^\#)\bm{W}_h(0)
$$
is negligible. This finishes the proof of the claim that \eqref{eq:theorem-proof-1} is negligible. We then have
\begin{align*}
    \mathcal L_h^+d_h(t_{0,h}) &= \mathcal L_h^+\mathcal O_{\mathcal A}^+U_0(t_{0,h})J_h\widetilde{Q}_{\star,h}J_h^\#\bm{W}_h(0)+R_h^\infty\bm{W}_h(0)\\ 
    &= \mathcal L_h^+\mathcal O_{\mathcal A}^+J_hS_h(t_{0,h})\widetilde{Q}_{\star,h}J_h^\#\bm{W}_h(0)+R_h^\infty\bm{W}_h(0)\\
    &= (I+hR_h^{\rm inv})S_h(t_{0,h})\widetilde{Q}_{\star,h}J_h^\#\bm{W}_h(0)+R_h^\infty\bm{W}_h(0)
\end{align*}
Applying $Q_{\star,h}F_h(-t_{0,h})$ from the left yields
\begin{align*}
    &Q_{\star,h}F_h(-t_{0,h})\mathcal L_h^+d_h(t_{0,h}) \\ 
    &=Q_{\star,h}F_h(-t_{0,h})S_h(t_{0,h})\widetilde{Q}_{\star,h}J_h^\#\bm{W}_h(0)+hQ_{\star,h}F_h(-t_{0,h})R_h^{\rm inv}S_h(t_{0,h})\widetilde{Q}_{\star,h}J_h^\#\bm{W}_h(0)+R_h^\infty\bm{W}_h(0).
\end{align*}
$R_h^\infty$ is negligible. Define $R_h:=Q_{\star,h}F_h(-t_{0,h})R_h^{\rm inv}S_h(t_{0,h})\widetilde{Q}_{\star,h}$. $R_h$ is uniformly of order 0 from the property of $F_h$, $R_h^{\rm inv}$ and $S_h$. Truncation operators $Q_{\star,h}$ and $\widetilde{Q}_{\star,h}$ don't change the order. Then 
\begin{equation}\label{eq:measured-data-reduction}
    Q_{\star,h}F_h(-t_{0,h})\mathcal L_h^+d_h(t_{0,h}) =Q_{\star,h}F_h(-t_{0,h})S_h(t_{0,h})\widetilde{Q}_{\star,h}J_h^\#\bm{W}_h(0)+hR_hJ_h^\#\bm{W}_h(0)+R_h^\infty\bm{W}_h(0).
\end{equation}

Now we calculate the leading order part in \eqref{eq:measured-data-reduction}. We first write $\kappa$ in terms of the known principal symbol $\omega$. From \eqref{eq:explicit-r-positive-branch},
\begin{equation*}
 r_3(\bm x,\bm\xi)=\frac{1}{\sqrt{2\epsilon_0\epsilon_\infty(\bm x)}}.
\end{equation*}
Since the entries of $\bm r$ are real, $A_0$ is independent of $\bm\xi$, and $\bm r^*A_0\bm r=1$, differentiation in $\xi_j$ gives
\begin{equation*}
 \bm\ell\partial_{\xi_j}\bm r=0.
\end{equation*}
The first two components of $\bm r$ are independent of $\bm x$. Recall $M(\bm x,\bm\xi) = A_0^{-1}(\bm x)\widetilde\Lambda(\bm\xi)$. We have
\begin{equation*}
 \sum_{j=1}^2(\partial_{\xi_j}M)(\partial_{x_j}\bm r)=\frac1\mu(\partial_{x_2}r_3,-\partial_{x_1}r_3,0,0)^\top.
\end{equation*}
Using
\begin{equation*}
 \bm\ell=\left(\sqrt{\frac\mu2}\widehat\xi_2,-\sqrt{\frac\mu2}\widehat\xi_1,\sqrt{\frac{\epsilon_0\epsilon_\infty}2},0\right)
\end{equation*}
the definition of $\kappa$ in section \ref{sec:injection-symbol} and the definition of $\bm{\mathcal D}$ preceding \eqref{eq:first-right-correction}, we obtain
\begin{equation*}
 \kappa(\bm x,\bm\xi)=\bm\ell\bm{\mathcal D}(\bm x,\bm\xi)=\frac12\widehat{\bm\xi}\cdot\nabla_{\bm x}\bigl(\mu\epsilon_0\epsilon_\infty(\bm x)\bigr)^{-1/2}=\frac12\sum_{j=1}^2
 \partial_{x_j}\partial_{\xi_j}\omega(\bm x,\bm\xi).
\end{equation*}

We next calculate the local coefficient at $t=0$. Write $\bm\Psi_{T_h}(\varpi,\theta)=(\bm z_\star,\bm\xi_0)$. On the retained chart, $\chi_\star=1$ near $\bm\xi_0$. From \eqref{eq:source-state-before-time-derivative}, direct integration in $\bm x$ gives
\begin{equation*}
 \int_{\mathbb R^2}e^{\ii(\bm x-\bm z_\star)\cdot(\bm\xi-\bm\xi_0)/h}e^{-|\bm x-\bm z_\star|^2/(2h)}\dd\bm x=2\pi h e^{-|\bm\xi-\bm\xi_0|^2/(2h)}.
\end{equation*}
Consequently,
\begin{equation}\label{eq:source-local-coefficient}
 \begin{aligned}
 &\left\langle \widetilde{Q}_{\star,h}J_h^{\#}\bm W_h(0),
 \psi_{\bm\Psi_{T_h}(\varpi,\theta),h}\right\rangle\\
 &=(2\pi h)^{-1}(\pi h)^{-1/2}
 \int e^{-|\bm\xi-\bm\xi_0|^2/(2h)}
 \left[s_h\bigl(\omega(\bm z_\star,\bm\xi)\bigr)
 +h\widetilde a_{0,h}(\bm\xi)\right]\dd\bm\xi\\
 &=(\pi h)^{-1/2}s_h(\varpi)
 \bigl(1+h r_{0,h}(\varpi,\theta)\bigr).
 \end{aligned}
\end{equation}
The last equality follows by setting $\bm\xi=\bm\xi_0+h^{1/2}\bm\zeta$ and expanding the amplitude at $\bm\xi_0$. The terms of order $h^{1/2}$ are odd in $\bm\zeta$, so
$$
    \int e^{-|\bm\zeta|^2/2}\zeta_j\dd\bm\zeta=0,    
$$  
and the remaining error is $O(h)$. After differentiating in $(\varpi,\theta)$, each differentiated integrand is bounded by a polynomial in $\bm\zeta e^{-|\bm\zeta|^2/2}$, and hence is integrable uniformly in $h$. Since $|s_h|$ is bounded below on $I_{\star,T_h}$, the error can be written in the relative form used in \eqref{eq:source-local-coefficient}.

Since 
\begin{equation*}
 \mathcal G_t(\bm\Phi^t(\bm\rho))=\int_0^t\beta(\bm\Phi^{t-s}(\bm\rho))\dd s=\int_0^t\beta(\bm\Phi^s(\bm\rho))\dd s,
\end{equation*}
from \eqref{eq:damped-factorization} and \eqref{eq:egorov-with-class},
\begin{equation}\label{eq:interaction-propagator}
 F_h(-t)S_h(t)=\Op_h\left(\exp\left[-\int_0^t\beta\bigl(\bm\Phi^s(\cdot)\bigr)\dd s\right]\right)+hR_h(t)
\end{equation}
for $0\le t\le t_+$ on the selected source-side set, where $R_h(t)$ is uniformly of order zero. 

Apply \eqref{eq:interaction-propagator} to the oscillatory-integral representation of $\widetilde{Q}_{\star,h}J_h^{\#}\bm W_h(0)$ in \eqref{eq:source-state-before-time-derivative}. The left symbol product first gives the principal amplitude
\begin{equation*}
 \exp\left[-\int_0^t\beta\bigl(\bm\Phi^s(\bm x,\bm\xi)\bigr)\dd s\right]s_h\bigl(\omega(\bm z_\star,\bm\xi)\bigr).
\end{equation*}
Taylor expansion in $\bm x$ about $\bm z_\star$, followed by
\begin{equation*}
 (x_j-z_{\star,j})e^{\ii(\bm x-\bm z_\star)\cdot\bm\xi/h}
 =\frac h\ii\partial_{\xi_j}e^{\ii(\bm x-\bm z_\star)\cdot\bm\xi/h},
\end{equation*}
shows that replacing $\bm x$ by $\bm z_\star$ changes the amplitude by $h$ times a controlled amplitude. Thus, uniformly for $0\le t\le t_+$,
\begin{equation}\label{eq:processed-field}
 \begin{aligned}
 &Q_{\star,h}F_h(-t)S_h(t)\widetilde{Q}_{\star,h}J_h^{\#}\bm W_h(0)\\
 &\quad=\frac1{(2\pi h)^2}\int e^{\ii(\bm x-\bm z_\star)\cdot\bm\xi/h}\bigg[\exp\left(-\int_0^t\beta\bigl(\bm\Phi^s(\bm z_\star,\bm\xi)\bigr)\dd s\right)
 s_h\bigl(\omega(\bm z_\star,\bm\xi)\bigr)+h a_{t,h}(\bm x,\bm\xi)\bigg]\dd\bm\xi,
 \end{aligned}
\end{equation}
where $a_{t,h}$ is a controlled amplitude with uniform bounds after differentiation in $t$.

Pairing \eqref{eq:processed-field} at $t=t_{0,h}$ with the test function in \eqref{eq:measured-coefficient}, and then using \eqref{eq:measured-data-reduction}, gives
\begin{equation}\label{eq:processed-coefficient}
 \mathscr P_h(\varpi,\theta)=(\pi h)^{-1/2}s_h(\varpi)\exp\left[-\int_0^{t_{0,h}}\beta\bigl(\bm\Phi^s(\bm\Psi_{T_h}(\varpi,\theta))\bigr)\dd s\right]
 \bigl(1+h r_{P,h}(\varpi,\theta)\bigr),
\end{equation}
where $r_{P,h}$ is uniformly bounded in every $C^N$ norm. Finally, $\beta=\gamma+\kappa$. Multiplying the square of \eqref{eq:processed-coefficient} by the exponential factor in \eqref{eq:measured-coefficient} cancels the term containing $\kappa$ and gives \eqref{eq:measured-energy}.
\end{proof}

For the selected frequency--direction chart $I_{\star,T_h}\times\Theta_{T_h}$, define
\begin{equation}\label{eq:directional-energy}
 D_h(\theta):=\frac{\displaystyle\int_{I_{\star,T_h}}\mathcal H_h(\varpi,\theta)\dd\varpi}{\displaystyle\frac1{|\Theta_{T_h}|}\int_{\Theta_{T_h}}\int_{I_{\star,T_h}}\mathcal H_h(\varpi,\theta')\dd\varpi\dd\theta'}.
\end{equation}

Put
\begin{equation}\label{eq:average-attenuation}
 c_h^{\rm av}:=\frac1{|\Theta_{T_h}|}\int_{\Theta_{T_h}}\exp\left[-2\int_0^{t_{0,h}}\gamma\bigl(\bm\Phi^s(\bm\Psi_{T_h}(\varpi,\theta'))\bigr)\dd s\right]
 \dd\theta'.
\end{equation}
The following result is crucial for the well-posedness of the $c_h^{\rm av}$.

\begin{proposition}
The integrand in \eqref{eq:average-attenuation} is independent of the choice of
$\varpi\in I_{\star,T_h}$.
\end{proposition}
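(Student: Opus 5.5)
The plan is to mirror the argument used above to show that $\bm x_T$ is independent of $\varpi$. That argument rests on the homogeneity of the Hamiltonian flow under dilation of the fiber variable. Fix $\theta'\in\Theta_{T_h}$ and two frequencies $\varpi_1,\varpi_2\in I_{\star,T_h}$. Both chart points $\bm\Psi_{T_h}(\varpi_1,\theta')$ and $\bm\Psi_{T_h}(\varpi_2,\theta')$ lie over $\bm z_\star$ and share the direction parameter $\theta'$. As in that proof, they therefore lie on the same ray in $\bm\xi$-space. So we may write $\bm\Psi_{T_h}(\varpi_1,\theta')=(\bm z_\star,\bm\xi)$ and $\bm\Psi_{T_h}(\varpi_2,\theta')=(\bm z_\star,\lambda\bm\xi)$ with $\lambda>0$. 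By \eqref{eq:positive-dispersion} and the normalization $\omega(\bm\Psi_T(\varpi,\theta))=\varpi$, we have $\lambda=\varpi_2/\varpi_1$.

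The key step is the flow identity already established in that proof. If $\bm\Phi^s(\bm z_\star,\bm\xi)=(\bm x(s),\bm\xi(s))$, then $\bm\Phi^s(\bm z_\star,\lambda\bm\xi)=(\bm x(s),\lambda\bm\xi(s))$ for all $s$. This holds because $\partial_{\bm\xi}\omega$ is homogeneous of degree $0$ and $\partial_{\bm x}\omega$ is homogeneous of degree $1$ in $\bm\xi$. The rescaled curve then solves the same Hamilton system with the dilated initial datum, and uniqueness of ODE solutions gives the identity. I would apply it for $s\in[0,t_{0,h}]$. The flow tube stays inside $\mathcal U$ for these times, by the choice of $K$ and of $\mathcal T$.

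The final step is to observe that $\gamma$ depends only on $\bm x$: we computed $\gamma(\bm x,\bm\xi)=\tfrac12 q_0(\bm x)$ above. Hence
\begin{equation*}
 \gamma\bigl(\bm\Phi^s(\bm\Psi_{T_h}(\varpi_2,\theta'))\bigr)=\tfrac12 q_0(\bm x(s))=\gamma\bigl(\bm\Phi^s(\bm\Psi_{T_h}(\varpi_1,\theta'))\bigr)
\end{equation*}
for every $s\in[0,t_{0,h}]$. Integrating in $s$ and exponentiating shows that the integrand of \eqref{eq:average-attenuation} takes the same value at $\varpi_1$ and $\varpi_2$.

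No step is a serious obstacle. The only point needing care is the claim that a fixed $\theta'$ corresponds to a single ray over $\bm z_\star$, i.e.\ that the chart $\bm\Psi_T$ is a genuine polar (frequency--direction) parametrization. I would take this from the construction of the chart \eqref{eq:frequency-chart}, exactly as the preceding proposition did. I would also note that the remark applies equally to $\kappa$: $\kappa$ is homogeneous of degree $0$ in $\bm\xi$, so the same argument shows that the weight in \eqref{eq:measured-coefficient} is likewise independent of $\varpi$.
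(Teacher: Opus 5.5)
Your proposal is correct and follows the paper's own argument. Both use the dilation invariance $\bm\Phi^s(\bm z_\star,\lambda\bm\xi)=(\bm x(s),\lambda\bm\xi(s))$, which comes from the homogeneity of $\omega$ and ODE uniqueness, to see that the spatial ray is independent of $\varpi$; you also state explicitly the closing step the paper leaves implicit, namely that $\gamma=\tfrac12 q_0(\bm x)$ depends only on $\bm x$.
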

\begin{proof}
Notice that, if $(\bm x(s),\bm\xi(s))$ solve the Hamilton equations
$$
    \frac{\dd}{\dd s}\bm x(s)=\partial_{\bm\xi}\omega(\bm x(s),\bm\xi(s)),
    \qquad
    \frac{\dd}{\dd s}\bm\xi(s)=-\partial_{\bm x}\omega(\bm x(s),\bm\xi(s)),
$$
with initial condition
$$
    (\bm x(0),\bm\xi(0))=(\bm z_\star,\bm\xi),
$$
then by the homogeneity in \eqref{eq:positive-dispersion},
$$
    \partial_{\bm\xi}\omega(\bm x,\lambda\bm\xi)=\partial_{\bm\xi}\omega(\bm x,\bm\xi), \qquad
    \partial_{\bm x}\omega(\bm x,\lambda\bm\xi)=\lambda\partial_{\bm x}\omega(\bm x,\bm\xi), \qquad \lambda>0.
$$
Hence $(\bm x(s),\lambda\bm\xi(s))$ satisfies the same Hamilton
equations with initial condition
$(\bm z_\star,\lambda\bm\xi)$. Uniqueness therefore gives
$$
    \bm\Phi^s(\bm z_\star,\lambda\bm\xi)=(\bm x(s),\lambda\bm\xi(s)),
$$
and consequently
$$
    \bm\pi_{\bm x}\bm\Phi^s(\bm z_\star,\lambda\bm\xi)=\bm\pi_{\bm x}\bm\Phi^s(\bm z_\star,\bm\xi).
$$
\end{proof}

Since $t_{0,h}$ remains in a compact interval and $\gamma$ is smooth on the compact flow tube, there are constants $c_0,C_0>0$ such that
\begin{equation*}
 c_0\le c_h^{\rm av}\le C_0
\end{equation*}
for all small $h$.

Substitution of \eqref{eq:measured-energy} into the numerator of \eqref{eq:directional-energy} gives
\begin{equation*}
 \int_{I_{\star,T_h}}\mathcal H_h(\varpi,\theta)\dd\varpi
 =\frac1{\pi h}\exp\left[-2\int_0^{t_{0,h}} \gamma\bigl(\bm\Phi^s(\bm\Psi_{T_h}(\varpi,\theta))\bigr)\dd s\right]
 \int_{I_{\star,T_h}}|s_h(\varpi)|^2\bigl(1+h r_{E,h}(\varpi,\theta)\bigr)\dd\varpi.
\end{equation*}
The exponential factor is independent of $\varpi$. The integral of $|s_h|^2$ is positive and has uniform upper and lower bounds on the retained intervals. The same factor occurs in the angular average in the denominator. Direct but lengthy calculation gives, for every $N$,
\begin{equation}\label{eq:D-attenuation}
 D_h(\theta)=\frac1{c_h^{\rm av}}\exp\left[-2\int_0^{t_{0,h}}
 \gamma\bigl(\bm\Phi^s(\bm\Psi_{T_h}(\varpi,\theta))\bigr)\dd s\right]
 +h r_{D,h}(\theta), \qquad \|r_{D,h}\|_{C^N(\Theta_{T_h})}\le C_N.
\end{equation}
All derivative bounds follow by differentiating under the integrals. Equation \eqref{eq:D-attenuation} and the positive upper and lower bounds for the exponential and $c_h^{\rm av}$ imply that there are constants $d_0,d_1>0$ such that
\begin{equation}\label{eq:direction-bounds}
 d_0\le D_h(\theta)\le d_1,
 \qquad
 \|D_h\|_{C^N(\Theta_{T_h})}\le C_N
\end{equation}
for every $N$ and all sufficiently small $h$.

Now we are ready to define the modulation function.
\begin{definition}
Choose $\alpha_*>0$ such that
\begin{equation*}
 \alpha_*<\frac{(1-\kappa_0)d_0}{\|\eta\|_{L^\infty}},
 \qquad
 \alpha_h:=\alpha_*\tau_h.
\end{equation*}
Define the modulation function $m_h$ on the image of $\bm x_h^c$, i.e. the pulse-center incidence curve by
\begin{equation}\label{eq:modulation-formula}
 m_h(\bm x_h^c(\theta)):=\frac{\alpha_h}{D_h(\theta)}.
\end{equation}
\end{definition}

Notice that \eqref{eq:modulation-formula} only defines $m_h$ on the image of $\bm{x}_h^c$. To extend the domain of definition of $m_h$, we firstly extend $D_h^{-1}$ from $\Theta_{T_h}$ to a slightly larger interval with the same uniform derivative bounds. We use the common tubular neighborhood of the curves $\bm x_h^c(\Theta_{T_h})$ to extend this function constantly in the normal direction, and multiply it by a fixed nonnegative cutoff equal to one on the curve. Since the tubular coordinate maps and their inverses have uniform derivatives, the chain rule and \eqref{eq:direction-bounds} give
\begin{equation*}
 m_h\in C_c^\infty(\mathbb R^2;[0,\infty)),
 \qquad
 \supp m_h\subset K_m\Subset\mathcal A,
\end{equation*}
and, for every $N$,
\begin{equation}\label{eq:mh-derivative-bound}
 \max_{|\beta|\le N}
 \|\partial_{\bm x}^{\beta}m_h\|_{L^\infty}
 \le C_N\alpha_h\le C_N'h.
\end{equation}
It follows that $\|m_h\|_{L^\infty}\le\alpha_h/d_0$. Therefore
\begin{equation*}
 |\eta_{\tau_h}(t-T_h)m_h(\bm x)|
 \le\frac{\|\eta\|_{L^\infty}}{\tau_h}
 \frac{\alpha_h}{d_0}
 =\frac{\alpha_*\|\eta\|_{L^\infty}}{d_0}<1-\kappa_0,
\end{equation*}
which ensures the positivity condition \eqref{eq:material-positivity} holds.

Now we can rewrite the function $j_m^+$, which is crucial for the reversed wave, in a more definite way with determined $m_h$. Let the first term on the right-hand side of \eqref{eq:D-attenuation} be $D_{0,h}(\theta)$, i.e.
$$
D_{0,h}(\theta):= \frac1{c_h^{\rm av}}\exp\left[-2\int_0^{t_{0,h}}
 \gamma\bigl(\bm\Phi^s(\bm\Psi_{T_h}(\varpi,\theta))\bigr)\dd s\right].
$$
It is bounded away from zero. Since $T_h-t_{0,h}=\tau_h/2=\nu h/2$ and $\gamma$ is smooth on the compact flow tube,
\begin{equation}\label{eq:short-loss-factor}
 \exp\left[-2\int_0^{T_h}\gamma\bigl(\bm\Phi^s(\bm\Psi_{T_h}(\varpi,\theta))\bigr)\dd s\right]
 =c_h^{\rm av}D_{0,h}(\theta)\bigl(1+h r_{c,h}(\varpi,\theta)\bigr),
\end{equation}
where $r_{c,h}$ is uniformly bounded in every $C^N$ norm. Indeed, the quotient of the left-hand side of \eqref{eq:short-loss-factor} by
$c_h^{\rm av}D_{0,h}$ is
\begin{equation*}
 \exp\left[-2\int_{t_{0,h}}^{T_h}\gamma\bigl(\bm\Phi^s(\bm\Psi_{T_h}(\varpi,\theta))\bigr)\dd s\right],
\end{equation*}
and its difference from one is $O(h)$ together with all derivatives.

Using $D_h=D_{0,h}+h r_{D,h}$ and \eqref{eq:short-loss-factor},
\begin{equation*}
 \frac{\alpha_h}{D_h(\theta)}\exp\left[-2\int_0^{T_h}\gamma\bigl(\bm\Phi^s(\bm\Psi_{T_h}(\varpi,\theta))\bigr)\dd s\right]=\alpha_hc_h^{\rm av}\frac{D_{0,h}(\theta)}{D_{0,h}(\theta)+h r_{D,h}(\theta)}\bigl(1+h r_{c,h}(\varpi,\theta)\bigr).
\end{equation*}
The quotient on the right equals $1+h$ times a function with uniform $C^N$ bounds. Hence
\begin{equation}\label{eq:modulation-cancellation}
 \frac{\alpha_h}{D_h(\theta)}\exp\left[-2\int_0^{T_h}\gamma\bigl(\bm\Phi^s(\bm\Psi_{T_h}(\varpi,\theta))\bigr)\dd s\right]
 =\alpha_h c_h^{\rm av}+h\alpha_h r_{{\rm can},h}(\varpi,\theta), \qquad \|r_{{\rm can},h}\|_{C^N}\le C_N.
\end{equation}

Since $p_\eta$ is real and positive on the retained interval, \eqref{eq:returned-principal-symbol}, \eqref{eq:round-trip-damping}, and \eqref{eq:modulation-cancellation} give
\begin{equation}\label{eq:returned-symbol}
 j_{m_h}^+\bigl(\bm\Psi_{T_h}(\varpi,\theta)\bigr)
 =\alpha_h c_h^{\rm av}p_\eta(\varpi)+h\alpha_h r_{j,h}(\varpi,\theta), \qquad \|r_{j,h}\|_{C^N(I_{\star,T_h}\times\Theta_{T_h})}\le C_N.
\end{equation}

\begin{remark}
We must point out that, the particular choice
$$
m_h(\bm x_h^c(\theta))=\frac{\alpha_h}{D_h(\theta)}
$$
is not essential for the refocusing results in Section~\ref{sec:causal-refocusing}. The crucial property is the asymptotic cancellation of the directional round-trip attenuation. More precisely, the arguments of Section~\ref{sec:causal-refocusing} remain valid for any admissible modulation $\widetilde m_h$ satisfying, uniformly on the selected chart,
$$
\widetilde m_h(\bm x_h^c(\theta))\exp\left[-2\int_0^{T_h}\gamma\bigl(\bm\Phi^s(\bm\Psi_{T_h}(\varpi,\theta))\bigr)\dd s\right]=A_h+hA_h r_h(\varpi,\theta),
$$
where $A_h>0$ is independent of $(\varpi,\theta)$, $A_h= O(h)$, and $r_h$ is uniformly bounded in the required $C^N$ norms. In addition, $\widetilde m_h$ should satisfy the same support, smoothness, size, and material-positivity conditions used for $m_h$. Under these assumptions, the leading returned symbol is still independent of direction up to a relative $O(h)$ error, and the focal-scale expansion, uniqueness of the leading maximum, $O(h^2)$ localization of the maximizer, and $O(h^2)$ spatial half-amplitude area remain unchanged, apart from the overall amplitude factor $A_h$.

Consequently, the reciprocal formula above should be viewed as a particular causal, measurement-based construction that guarantees the required cancellation, rather than as a unique modulation. For example, one may replace it by
$$
\widetilde m_h(\bm x_h^c(\theta))=\frac{\alpha_h}{D_h(\theta)}\bigl(1+h b_h(\theta)\bigr),
$$
where $b_h$ and its required derivatives are uniformly bounded, without changing the conclusions of Section~\ref{sec:causal-refocusing}.
\end{remark}

\section{Microlocal Refocusing and Focal-Scale Asymptotics}\label{sec:causal-refocusing}

We now study the selected returned field near the refocusing point. Using the modulation constructed in the previous section, we derive a scaled asymptotic expansion on the spatial and temporal scale $h$. This expansion identifies the limiting focal profile, gives the location of the refocused maximum, and determines the leading-order size of the focal region.

Let
\begin{equation*}
 \mathcal B_h:=\{\bm\Psi_{T_h}(\varpi,\theta):(\varpi,\theta)\in I_{\star,T_h}\times\Theta_{T_h}\}.
\end{equation*}
After a uniform restriction of the visible chart, choose a real $q\in C_c^\infty(\mathcal U;[0,1])$, a fixed nonempty open set $U_{\rm foc}\Subset\mathbb R_{\bm\xi}^2\setminus\{0\}$, and a fixed compact set $K_{\bm\xi}\Subset\mathbb R^2\setminus\{0\}$ such that, for all small $h$,
\begin{equation*}
 \{\bm z_\star\}\times\supp q(\bm z_\star,\cdot)\subset\operatorname{int}_{\{\bm z_\star\}\times\mathbb R^2}\mathcal B_h,
 \qquad
 q(\bm z_\star,\bm\xi)=1\quad(\bm\xi\in U_{\rm foc}), \qquad \supp q(\bm z_\star,\cdot)\subset K_{\bm\xi}.
\end{equation*}
Put $Q_h=\Op_h(q)$.

\subsection{The selected source-side return}

We first combine the two scalar reductions of the returned operator. From \eqref{eq:physical-scalar-bridge}, after inserting $Q_h$ on both sides,
\begin{equation*}
 \begin{aligned}
 Q_hJ_{h,-}^{\#}K_{m_h,\tau_h}J_hQ_h
 =&Q_h\mathcal K_{m_h,\tau_h}Q_h
 +hQ_h\Op_h(r_{m_h,h}^{\rm br})Q_h
 +R_{m_h,h}^{\infty,\rm br}.
 \end{aligned}
\end{equation*}
Equation \eqref{eq:uniform-m-symbol-remainder} gives
\begin{equation*}
 Q_h\mathcal K_{m_h,\tau_h}Q_h = \Op_h(q^2j_{m_h}^+)+h\Op_h(r_{m_h,h}) +R_{m_h,h}^{\infty}.
\end{equation*}
Every term in these two expansions is linear in $m_h$. By \eqref{eq:mh-derivative-bound}, every finite symbol seminorm of $m_h$ is $O(\alpha_h)$. The estimates in \eqref{eq:bridge-symbol-bound} and in the statement following \eqref{eq:uniform-m-symbol-remainder} therefore allow us to write
\begin{equation}\label{eq:selected-returned-symbol}
 Q_hJ_{h,-}^{\#}K_{m_h,\tau_h}J_hQ_h = \Op_h\bigl(q^2j_{m_h}^+\bigr)+h\alpha_h\Op_h(r_h^{\rm ret})+R_h^{\infty,\rm ret},
\end{equation}
where $r_h^{\rm ret}$ is uniformly of order zero and $R_h^{\infty,\rm ret}$ is negligible. The same conclusion follows for all symbol derivatives because only finitely many derivatives of $m_h$ enter each fixed seminorm.

We next write the symbol in source-side coordinates. On $\{\bm z_\star\}\times\supp q(\bm z_\star,\cdot)$, every point can be written as $\bm\Psi_{T_h}(\varpi,\theta)$. Hence \eqref{eq:returned-symbol} gives
\begin{equation*}
 q(\bm z_\star,\bm\xi)^2j_{m_h}^+(\bm z_\star,\bm\xi)=\alpha_hc_h^{\rm av}q(\bm z_\star,\bm\xi)^2p_\eta\bigl(\omega(\bm z_\star,\bm\xi)\bigr)+h\alpha_h r_{0,h}^{\rm ret}(\bm\xi),
\end{equation*}
where $r_{0,h}^{\rm ret}$ has one fixed compact support and uniform symbol bounds. By \eqref{eq:returned-principal-symbol} and \eqref{eq:mh-derivative-bound}, every derivative of $q^2j_{m_h}^+$ is $O(\alpha_h)$ on the fixed compact set. For $|\bm y|\le R$, Taylor's formula therefore gives
\begin{equation*}
 \begin{aligned}
 &q(\bm z_\star+h\bm y,\bm\xi)^2j_{m_h}^+(\bm z_\star+h\bm y,\bm\xi)-q(\bm z_\star,\bm\xi)^2j_{m_h}^+(\bm z_\star,\bm\xi)\\
 &\quad=h\sum_{j=1}^2y_j\int_0^1\partial_{x_j}\bigl(q^2j_{m_h}^+\bigr)(\bm z_\star+sh\bm y,\bm\xi)\dd s
 =h\alpha_h\widetilde r_{h,R}^{\rm ret}(\bm y,\bm\xi).
 \end{aligned}
\end{equation*}
Thus
\begin{equation}\label{eq:selected-returned-frozen}
 \begin{aligned}
 q(\bm z_\star+h\bm y,\bm\xi)^2j_{m_h}^+(\bm z_\star+h\bm y,\bm\xi)=&\alpha_hc_h^{\rm av}q(\bm z_\star,\bm\xi)^2p_\eta\bigl(\omega(\bm z_\star,\bm\xi)\bigr)\\
 &+h\alpha_h r_{h,R}^{\rm ret}(\bm y,\bm\xi),
 \end{aligned}
\end{equation}
where $r_{h,R}^{\rm ret}$ has one fixed compact frequency support and, for all multi-indices $\gamma,\delta$,
\begin{equation*}
 \sup_{|\bm y|\le R,\bm\xi}|\partial_{\bm y}^{\gamma}\partial_{\bm\xi}^{\delta}
 r_{h,R}^{\rm ret}(\bm y,\bm\xi)|\le C_{R,\gamma,\delta}.
\end{equation*}

\subsection{The point source}

The coherent return time is
\begin{equation*}
 t_{{\rm focus},h}:=2T_h-t_{{\rm src},h}.
\end{equation*}
On the retained frequency interval, define $\phi_{\rm src}\in[0,2\pi)$ and $a_{\rm src}>0$ by
\begin{equation*}
 e^{\ii\phi_{\rm src}}a_{\rm src}(\varpi):=-\ii e^{-\ii t_{{\rm src},h}\varpi/h}s_h(\varpi).
\end{equation*}
By \eqref{eq:source-spectrum}, $t_{{\rm src},h}=-h\zeta_{\rm src}$ and the two factors $e^{-\ii t_{{\rm src},h}\varpi/h}$ and $e^{-\ii\zeta_{\rm src}\varpi}$ cancel. Since $-\ii\widehat f$ has one fixed sign on $I_{\rm src}$, the phase $\phi_{\rm src}$ is independent of $\varpi$ and $a_{\rm src}$ is positive.

For $\omega=\omega(\bm z_\star,\bm\xi)$, set
\begin{equation}\label{eq:focal-aperture}
 a_{\rm foc}(\bm\xi):= a_{\rm src}(\omega)q(\bm z_\star,\bm\xi)^2p_\eta(\omega)\omega,
\end{equation}
and
\begin{equation*}
 \mathscr F_{\rm foc}(\varsigma,\bm y):=\int_{\mathbb R^2}e^{\ii(\bm y\cdot\bm\xi+\varsigma\omega(\bm z_\star,\bm\xi))}a_{\rm foc}(\bm\xi)\dd\bm\xi.
\end{equation*}
The expression in \eqref{eq:focal-aperture} follows from \eqref{eq:returned-symbol}, and therefore from the reciprocal choice \eqref{eq:modulation-formula}. Without \eqref{eq:modulation-cancellation}, the leading integrand in \eqref{eq:focal-expansion} contains the factor $m_h(\bm X_{T_h})\exp[-2\int_0^{T_h}\gamma\circ\bm\Phi^s\dd s]$, which in general depends on the direction.
The function $a_{\rm foc}$ is smooth, nonnegative, and supported in $K_{\bm\xi}$. Since $q=1$ on $U_{\rm foc}$, and since $a_{\rm src}$, $p_\eta$, and $\omega$ have positive lower bounds on every fixed compact subset of $U_{\rm foc}$, there is a constant $c_1>0$ such that
\begin{equation}\label{eq:focal-aperture-lower-bound}
 a_{\rm foc}(\bm\xi)\ge c_1\qquad(\bm\xi\in U_{\rm foc}).
\end{equation}

For bounded $\varsigma$, define the selected negative-TM electric component by
\begin{equation*}
E_{z,h}^{\rm back,sel}(t_{{\rm focus},h}+h\varsigma):=\bm e_3^\top J_{h,-}S_{-,h}(-t_{{\rm src},h}+h\varsigma)Q_hJ_{h,-}^{\#}K_{m_h,\tau_h}J_hQ_hJ_h^{\#}\bm{\Upsilon}_{0,h}.
\end{equation*}

\begin{proposition}
For every fixed $R>0$ and every $k\in\mathbb N_0$, there are functions
$\mathscr R_{h,R}$ such that
\begin{equation}\label{eq:focal-expansion}
 \begin{aligned}
 E_{z,h}^{\rm back,sel}\bigl(t_{{\rm focus},h}+h\varsigma,\bm z_\star+h\bm y\bigr)
 =C_h\alpha_hc_h^{\rm av}\left(\mathscr F_{\rm foc}(\varsigma,\bm y)+h\mathscr R_{h,R}(\varsigma,\bm y)\right)
 \end{aligned}
\end{equation}
for $|\varsigma|\le R$ and $|\bm y|\le R$, where
\begin{equation}\label{eq:field-prefactor}
 C_h:=\frac{e^{\ii\phi_{\rm src}}}{(2\pi h)^2h\sqrt{2\epsilon_0\epsilon_\infty(\bm z_\star)}}
\end{equation}
is independent of $(\varsigma,\bm y)$, and
\begin{equation}\label{eq:focal-remainder-bound}
 \|\mathscr R_{h,R}\|_{C^k([-R,R]\times B_R(0))}\le C_{R,k}.
\end{equation}
Consequently,
\begin{equation}\label{eq:normalized-focal-profile}
 \frac{E_{z,h}^{\rm back,sel}(t_{{\rm focus},h}+h\varsigma,\bm z_\star+h\bm y)}{E_{z,h}^{\rm back,sel}(t_{{\rm focus},h},\bm z_\star)}
 =\frac{\mathscr F_{\rm foc}(\varsigma,\bm y)}{\mathscr F_{\rm foc}(0,0)}+O_{C^k}(h)
\end{equation}
uniformly on the same set. The right-hand side of
\eqref{eq:normalized-focal-profile} contains no directional attenuation factor.
\end{proposition}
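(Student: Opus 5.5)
The plan is to compute the selected field by composing three oscillatory-integral representations already established, and then to evaluate the resulting kernel at the focal scale $\bm x=\bm z_\star+h\bm y$, $t=t_{{\rm focus},h}+h\varsigma$. First, I would apply \eqref{eq:source-state} to $J_h^{\#}\bm\Upsilon_{0,h}$. Inserting $Q_h$ is harmless since $\chi_\star=1$ on the relevant support, up to negligible terms. This gives an oscillatory integral centered at $\bm z_\star$ with amplitude $-\frac{\ii}{h}\omega s_h(\omega)q+r_{\Upsilon,h}$. Next, I would apply the compressed returned operator through \eqref{eq:selected-returned-symbol}, in the frozen form \eqref{eq:selected-returned-frozen}. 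By the symbol product \eqref{eq:symbol-product} and the identity \eqref{eq:trick}, the composition multiplies the amplitude by $\alpha_hc_h^{\rm av}q^2p_\eta(\omega)$. The error is $h\alpha_h$ times a controlled amplitude; against the $h^{-1}$ prefactor, this is a relative $O(h)$ correction.

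Second, I would treat the outgoing propagation $J_{h,-}S_{-,h}(-t_{{\rm src},h}+h\varsigma)$ with $-t_{{\rm src},h}+h\varsigma=h(\zeta_{\rm src}+\varsigma)$. This is a short-time evolution. The negative-branch analogue of Lemma~\ref{lem:short-time-point-parametrix}, proved by the same term-by-term symbol construction from \eqref{eq:negative-propagator}, gives the principal symbol $e^{\ii(\zeta_{\rm src}+\varsigma)\omega}$. Its remainder is $h$ times a controlled amplitude, uniformly for $|\varsigma|\le R$. Then $\bm e_3^\top J_{h,-}$ contributes the principal factor $\bm e_3^\top\bm r_-=1/\sqrt{2\epsilon_0\epsilon_\infty}$. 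Freezing $\epsilon_\infty$ at $\bm z_\star$ costs one factor $h$ by Taylor's formula and \eqref{eq:trick}.

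Collecting the leading amplitude gives
$$-\ii\, e^{\ii\zeta_{\rm src}\omega}s_h(\omega)\,\omega\, e^{\ii\varsigma\omega}q^2p_\eta(\omega)\,\alpha_hc_h^{\rm av}\big/\bigl(h\sqrt{2\epsilon_0\epsilon_\infty(\bm z_\star)}\bigr).$$
Because $t_{{\rm src},h}=-h\zeta_{\rm src}$, the factor $-\ii e^{\ii\zeta_{\rm src}\omega}s_h$ equals $e^{\ii\phi_{\rm src}}a_{\rm src}(\omega)$ by definition. The amplitude is therefore $C_h\alpha_hc_h^{\rm av}(2\pi h)^2e^{\ii\varsigma\omega}a_{\rm foc}(\bm\xi)$. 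The spatial phase $e^{\ii(\bm x-\bm z_\star)\cdot\bm\xi/h}$ becomes $e^{\ii\bm y\cdot\bm\xi}$, so the leading term is exactly $C_h\alpha_hc_h^{\rm av}\mathscr F_{\rm foc}(\varsigma,\bm y)$.

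Every remaining term has amplitude $h$ times a controlled amplitude, each uniformly for $|\varsigma|,|\bm y|\le R$ and supported in a fixed compact $\bm\xi$-set. These come from $r_{\Upsilon,h}$, the $h\alpha_h r^{\rm ret}$ terms, the short-time remainder, the mode-symbol corrections in \eqref{eq:full-negative-mode-symbols}, and the frozen-coefficient differences. Negligible operators applied to $\bm\Upsilon_{0,h}$, which is $O(h^{-N_0'})$ in some $H_h^{-N_0}$ by \eqref{eq:delta-semiclassical-sobolev-bound}, are $O(h^\infty)$ pointwise in $C^k$. Defining $\mathscr R_{h,R}$ as the sum of these terms, the bound \eqref{eq:focal-remainder-bound} follows by differentiating under the $\bm\xi$-integral: $\partial_\varsigma$ and $\partial_{\bm y}$ bring down bounded powers of $\omega$ and $\bm\xi$.

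I expect the main obstacle to be bookkeeping rather than a single estimate. The point is to check that each error, after the $h$-scaling of $\bm x$ and the $h^{-1}$ factor from the time derivative in $\bm\Upsilon_{0,h}$, is genuinely of relative order $h$ with $C^k$ control in the rescaled variables $(\varsigma,\bm y)$. This is especially delicate for the $\bm x$-dependence of the symbols, which must be frozen at $\bm z_\star+h\bm y$ rather than at $\bm z_\star$. Finally, \eqref{eq:normalized-focal-profile} follows by dividing, since $\mathscr F_{\rm foc}(0,0)=\int a_{\rm foc}\ge c_1|U_{\rm foc}|>0$ by \eqref{eq:focal-aperture-lower-bound} and $a_{\rm foc}\ge0$. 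The prefactors $C_h\alpha_hc_h^{\rm av}$ cancel, and $a_{\rm foc}$ contains no $\gamma$-dependent factor, which gives the last assertion of the statement.
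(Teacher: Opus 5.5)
Your proposal is correct and follows the paper's proof essentially step for step. Both arguments take the same route: the source representation \eqref{eq:source-state}, then \eqref{eq:selected-returned-symbol} frozen via \eqref{eq:selected-returned-frozen}, then the short-time symbol expansion of $S_{-,h}$ over time $h(\zeta_{\rm src}+\varsigma)$ and the freezing of $\bm e_3^\top\bm r_{-,h}$ at $\bm z_\star$, identifying the leading integral as $e^{\ii\phi_{\rm src}}\mathscr F_{\rm foc}$ with every other term of relative size $h$, and finally dividing using $\mathscr F_{\rm foc}(0,0)>0$.
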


\begin{proof}
We first apply the returned operator at time $2T_h$. Since
$\chi_\star=1$ near $\supp q(\bm z_\star,\cdot)$, \eqref{eq:source-state} gives
\begin{equation}\label{eq:selected-source-input}
 J_h^{\#}\bm{\Upsilon}_{0,h}(\bm x)
 =\frac1{(2\pi h)^2}\int e^{\ii(\bm x-\bm z_\star)\cdot\bm\xi/h}\left[-\frac\ii h\omega(\bm z_\star,\bm\xi)s_h\bigl(\omega(\bm z_\star,\bm\xi)\bigr)+r_{\Upsilon,h}^q(\bm x,\bm\xi)\right]\dd\bm\xi,
\end{equation}
where $r_{\Upsilon,h}^q$ is a controlled amplitude with one fixed compact frequency support. This representation is used only on the input support of $Q_h$ in \eqref{eq:selected-returned-symbol}.

Apply \eqref{eq:selected-returned-symbol} to \eqref{eq:selected-source-input}. The product of the leading returned symbol and the term of size $h^{-1}$ in \eqref{eq:selected-source-input} is
\begin{equation*}
 -\frac\ii h q(\bm x,\bm\xi)^2j_{m_h}^+(\bm x,\bm\xi)\omega(\bm z_\star,\bm\xi)s_h\bigl(\omega(\bm z_\star,\bm\xi)\bigr).
\end{equation*}
Now we check the order of all the remaining terms. $q^2j_{m_h}^+=O(\alpha_h)$ multiplied by the controlled amplitude $r_{\Upsilon,h}^q$ is $O(\alpha_h)$. $h\alpha_h\Op_h(r_h^{\rm ret})$ applied to the $h^{-1}$ term in \eqref{eq:selected-source-input} is $O(\alpha_h)$. Every non-leading term in the Kohn--Nirenberg composition contains one factor $h$ together with derivatives of the symbols and amplitudes, and is again $O(\alpha_h)$. By \eqref{eq:delta-semiclassical-sobolev-bound} and the fact that $R_h^{\infty,\rm ret}$ is negligible, the smoothing term is $O(\alpha_hh^M)$ in every fixed $C^k$ norm after increasing $M$. We can then conclude that all the remaining terms have order $O(\alpha_h)$ amplitudes.

Hence, on a fixed spatial compact set,
\begin{equation}\label{eq:return-at-2T}
 \begin{aligned}
 &Q_hJ_{h,-}^{\#}K_{m_h,\tau_h}J_hQ_hJ_h^{\#}\bm{\Upsilon}_{0,h}(\bm x)\\
 &\quad=\frac1{(2\pi h)^2}\int e^{\ii(\bm x-\bm z_\star)\cdot\bm\xi/h}\left[-\frac\ii h q(\bm x,\bm\xi)^2j_{m_h}^+(\bm x,\bm\xi)\omega(\bm z_\star,\bm\xi)s_h(\omega)
 +\alpha_h r_{0,h}(\bm x,\bm\xi)\right]\dd\bm\xi,
 \end{aligned}
\end{equation}
where $r_{0,h}$ is a controlled amplitude and $s_h(\omega)$ denotes $s_h(\omega(\bm z_\star,\bm\xi))$.

Now restrict to $\bm x=\bm z_\star+h\bm y$, $|\bm y|\le R$, and use \eqref{eq:selected-returned-frozen}. The term of size $h^{-1}$ in \eqref{eq:return-at-2T} becomes
\begin{equation*}
 -\frac\ii h\alpha_hc_h^{\rm av}q(\bm z_\star,\bm\xi)^2p_\eta(\omega)\omega s_h(\omega),
\end{equation*}
and the difference is an $O(\alpha_h)$ controlled amplitude. Thus
\begin{equation}\label{eq:return-at-2T-frozen}
 \begin{aligned}
 &Q_hJ_{h,-}^{\#}K_{m_h,\tau_h}J_hQ_hJ_h^{\#}\bm{\Upsilon}_{0,h}(\bm z_\star+h\bm y)\\
 &\quad=\frac1{(2\pi h)^2}\int e^{\ii\bm y\cdot\bm\xi}\left[-\frac\ii h\alpha_hc_h^{\rm av}q(\bm z_\star,\bm\xi)^2p_\eta(\omega)\omega s_h(\omega)+\alpha_h r_{1,h,R}(\bm y,\bm\xi)\right]\dd\bm\xi,
 \end{aligned}
\end{equation}
with the same type of uniform support and derivative bounds.

We next propagate for the additional time
\begin{equation*}
 -t_{{\rm src},h}+h\varsigma=h(\zeta_{\rm src}+\varsigma).
\end{equation*}
Set $\zeta=\zeta_{\rm src}+\varsigma$. Rescaling time in \eqref{eq:negative-propagator} gives
\begin{equation*}
 \partial_\zeta S_{-,h}(h\zeta)=\ii\Op_h(\omega)S_{-,h}(h\zeta)-h\Op_h(\beta_{-,h})S_{-,h}(h\zeta).
\end{equation*}
Solving the symbol equations on the fixed interval $|\zeta|\le\zeta_{\rm src}+R$ gives
\begin{equation}\label{eq:negative-short-time-focus}
 S_{-,h}(-t_{{\rm src},h}+h\varsigma)=\Op_h\left(e^{\ii(\zeta_{\rm src}+\varsigma)\omega}+h r_{-,h,\varsigma}\right)+R_{h,\varsigma}^{\infty},
\end{equation}
where $r_{-,h,\varsigma}$ is uniformly of order zero with all derivatives in $\varsigma$, and the last term is negligible uniformly for $|\varsigma|\le R$.

The third component of the negative injection symbol in \eqref{eq:full-negative-mode-symbols} satisfies
\begin{equation*}
 \bm e_3^\top\bm r_{-,h}(\bm x,\bm\xi)=\frac1{\sqrt{2\epsilon_0\epsilon_\infty(\bm x)}}+h r_{e,h}(\bm x,\bm\xi).
\end{equation*}
For $\bm x=\bm z_\star+h\bm y$, Taylor's formula gives 
\begin{equation}\label{eq:negative-electric-freezing}
 \bm e_3^\top\bm r_{-,h}(\bm z_\star+h\bm y,\bm\xi)=\frac1{\sqrt{2\epsilon_0\epsilon_\infty(\bm z_\star)}}
 +h r_{e,h,R}(\bm y,\bm\xi),
\end{equation}
with uniform derivative bounds.

Apply \eqref{eq:negative-short-time-focus} and then \eqref{eq:negative-electric-freezing} to \eqref{eq:return-at-2T-frozen}. The product of the displayed leading terms is
\begin{equation*}
 \begin{aligned}
 &\frac{\alpha_hc_h^{\rm av}}{(2\pi h)^2h\sqrt{2\epsilon_0\epsilon_\infty(\bm z_\star)}}\int e^{\ii\bm y\cdot\bm\xi}e^{\ii(\zeta_{\rm src}+\varsigma)\omega}
 \left[-\ii s_h(\omega)\right]q(\bm z_\star,\bm\xi)^2p_\eta(\omega)\omega\dd\bm\xi.
 \end{aligned}
\end{equation*}
Define $\phi_{\rm src}\in[0,2\pi)$ and $a_{\rm src}>0$ by
\begin{equation*}
 e^{\ii\phi_{\rm src}}a_{\rm src}(\omega):=-\ii e^{\ii \zeta_{\rm src}\omega}s_h(\omega).
\end{equation*}
The leading integral is therefore $e^{\ii\phi_{\rm src}}\mathscr F_{\rm foc}(\varsigma,\bm y)$. Every other term contains one of the following factors: the $O(\alpha_h)$ amplitude in \eqref{eq:return-at-2T-frozen}, the factor $h$ in \eqref{eq:negative-short-time-focus}, the factor $h$ in \eqref{eq:negative-electric-freezing}, or a first-order composition term. Consequently the sum of all these terms can be written as
\begin{equation*}
 \frac{\alpha_h}{(2\pi h)^2}\int e^{\ii\bm y\cdot\bm\xi}r_{h,R}(\varsigma,\bm y,\bm\xi)\dd\bm\xi,
\end{equation*}
where $r_{h,R}$ has one fixed compact frequency support and all derivatives in $(\varsigma,\bm y,\bm\xi)$ are uniformly bounded. Since $c_h^{\rm av}$ is bounded above and below, division by the factor in \eqref{eq:field-prefactor} shows that this error is $C_h\alpha_hc_h^{\rm av}h\mathscr R_{h,R}$, with
\begin{equation*}
 \mathscr R_{h,R}(\varsigma,\bm y)=e^{-\ii\phi_{\rm src}}\frac{\sqrt{2\epsilon_0\epsilon_\infty(\bm z_\star)}}{c_h^{\rm av}}\int e^{\ii\bm y\cdot\bm\xi}r_{h,R}(\varsigma,\bm y,\bm\xi)\dd\bm\xi.
\end{equation*}
Differentiation under the integral proves \eqref{eq:focal-remainder-bound}, because $\bm\xi$ remains in one fixed compact set. This proves \eqref{eq:focal-expansion}.

Finally, \eqref{eq:focal-aperture-lower-bound} gives
\begin{equation*}
 \mathscr F_{\rm foc}(0,0)=\int a_{\rm foc}(\bm\xi)\dd\bm\xi>0.
\end{equation*}
For small $h$, the denominator in \eqref{eq:normalized-focal-profile} is therefore nonzero. Dividing \eqref{eq:focal-expansion} by its value at $(\varsigma,\bm y)=(0,0)$ and using
\begin{equation*}
 \frac{A+hB}{M_{\rm foc}+hD}-\frac A{M_{\rm foc}}=h\frac{BM_{\rm foc}-AD}{M_{\rm foc}(M_{\rm foc}+hD)}
\end{equation*}
with $M_{\rm foc}=\mathscr F_{\rm foc}(0,0)$ proves \eqref{eq:normalized-focal-profile} and its $C^k$ bound.
\end{proof}

Now we are ready to introduce the main result of this paper. We define the region of focal spot $\mathcal F_{h,1/2}$, which is defined as the neighborhood of focus with amplitude larger than half of focus amplitude
\begin{equation*}
 \mathcal F_{h,1/2}:=\left\{\bm x: |E_{z,h}^{\rm back,sel}(t_{{\rm focus},h},\bm x)|\ge\frac12|E_{z,h}^{\rm back,sel}(t_{{\rm focus},h},\bm z_\star)|\right\},
\end{equation*}
and put $\mathcal R_h^{\rm foc}:=|\mathcal F_{h,1/2}|$.
\begin{theorem}
The function $|\mathscr F_{\rm foc}|$ has its unique global maximum at
$(\varsigma,\bm y)=(0,0)$:
\begin{equation}\label{eq:spacetime-strict-maximum}
 |\mathscr F_{\rm foc}(\varsigma,\bm y)|<\mathscr F_{\rm foc}(0,0)\qquad ((\varsigma,\bm y)\ne(0,0)).
\end{equation}
Fix $R>0$. Any maximizer $(t_h^\#,\bm x_h^\#)$ of $|E_{z,h}^{\rm back,sel}(t,\bm x)|$ over
\begin{equation*}
 |t-t_{{\rm focus},h}|\le Rh,
 \qquad
 |\bm x-\bm z_\star|\le Rh
\end{equation*}
satisfies
\begin{equation}\label{eq:focus-error}
 |t_h^\#-t_{{\rm focus},h}|+|\bm x_h^\#-\bm z_\star|\le C_Rh^2.
\end{equation}
Thus the selected return is focused in both time and space on the scale $h$.

The focal size $\mathcal R_h^{\rm foc}$ satisfies
\begin{equation}\label{eq:focal-scale}
 ch^2\le\mathcal R_h^{\rm foc}\le Ch^2.
\end{equation}
\end{theorem}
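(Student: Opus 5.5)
The plan has three parts, taken in order: the strict maximum of the limiting profile, localization of the maximizer of the true field, and the half-amplitude area.

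\textbf{Strict maximum \eqref{eq:spacetime-strict-maximum}.} This follows from the nonnegativity of $a_{\rm foc}$ and a triangle inequality. For every $(\varsigma,\bm y)$,
$$|\mathscr F_{\rm foc}(\varsigma,\bm y)|\le\int a_{\rm foc}(\bm\xi)\dd\bm\xi=\mathscr F_{\rm foc}(0,0).$$
Equality forces the phase $\bm y\cdot\bm\xi+\varsigma\omega(\bm z_\star,\bm\xi)$ to be constant modulo $2\pi$ on $\supp a_{\rm foc}$. In particular it is constant on the open set $U_{\rm foc}$, where $a_{\rm foc}\ge c_1$ by \eqref{eq:focal-aperture-lower-bound}. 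The phase is continuous in $\bm\xi$ and $U_{\rm foc}$ may be taken connected, so the phase is genuinely constant there and its gradient vanishes:
$$\bm y+\varsigma\,\partial_{\bm\xi}\omega(\bm z_\star,\bm\xi)=0 \quad\text{on } U_{\rm foc}.$$
Here $\partial_{\bm\xi}\omega=c_\star\widehat{\bm\xi}$ with $c_\star=(\mu\epsilon_0\epsilon_\infty(\bm z_\star))^{-1/2}$, and $\widehat{\bm\xi}$ takes at least two non-parallel values on the open set $U_{\rm foc}$. Hence $\varsigma=0$, and then $\bm y=0$.

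\textbf{Nondegeneracy and localization \eqref{eq:focus-error}.} I first upgrade the strict maximum to a nondegenerate one. Set $G:=|\mathscr F_{\rm foc}|^2$, which is smooth near the origin. Since $\mathscr F_{\rm foc}(0,0)>0$, we have $\nabla G(0)=0$. The Hessian of $G$ at the origin equals $-2M_{\rm foc}$ times the covariance matrix of the linear functionals $(\bm y,\varsigma)\mapsto\bm y\cdot\bm\xi+\varsigma\omega$ under the probability weight $a_{\rm foc}/M_{\rm foc}$. This matrix is negative definite, because the functions $\xi_1,\xi_2,\omega(\bm z_\star,\bm\xi)$ are affinely independent on $U_{\rm foc}$; indeed $\omega=c_\star|\bm\xi|$ is not affine there.

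Next, by \eqref{eq:focal-expansion}, on $[-R,R]\times B_R$ the rescaled function $|E|^2/|C_h\alpha_hc_h^{\rm av}|^2$ equals $G+hH_h$, with $\|H_h\|_{C^2}\le C$ by \eqref{eq:focal-remainder-bound}. Let $(\varsigma_h,\bm y_h)$ be a maximizer of the rescaled function.
\begin{enumerate}[label=(\alph*)]
\item By the strict global maximum and compactness, $G\le G(0)-\delta(r)$ outside $B_r(0)$ for each $r>0$. Comparing with the value at the origin gives $(\varsigma_h,\bm y_h)\to0$.
\item Once the maximizer is in the region where the Hessian is negative definite, it is an interior critical point, so $\nabla G(\varsigma_h,\bm y_h)=-h\nabla H_h=O(h)$.
\item Nondegeneracy then gives $|(\varsigma_h,\bm y_h)|\le Ch$.
\end{enumerate}
Undoing the scaling $t-t_{{\rm focus},h}=h\varsigma$, $\bm x-\bm z_\star=h\bm y$ yields $O(h^2)$ in physical units. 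I expect this to be the main obstacle, namely verifying nondegeneracy of the Hessian cleanly; the covariance argument is what I would try first.

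\textbf{Focal area \eqref{eq:focal-scale}.} Set $\varsigma=0$. In the variable $\bm y$, the set $\mathcal F_{h,1/2}$ becomes
$$\{\bm y:|\mathscr F_{\rm foc}(0,\bm y)+h\mathscr R|\ge\tfrac12|M_{\rm foc}+O(h)|\}.$$
For the lower bound: $|\mathscr F_{\rm foc}(0,\bm y)|\ge\tfrac34M_{\rm foc}$ on a fixed ball $B_{r_0}$, by continuity. This ball lies in the rescaled set for small $h$, so $\mathcal R_h^{\rm foc}\ge\pi r_0^2h^2$.

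For the upper bound I need the rescaled set to stay inside a fixed ball $B_{R_0}$, uniformly in $h$. This requires decay of $\mathscr F_{\rm foc}(0,\bm y)=\widehat{a_{\rm foc}}(-\bm y)$, which is Schwartz since $a_{\rm foc}\in C_c^\infty$. Choose $R_0$ so that $|\mathscr F_{\rm foc}(0,\bm y)|<M_{\rm foc}/4$ for $|\bm y|\ge R_0$. The expansion \eqref{eq:focal-expansion} is only stated on compact windows, so I need a global statement outside $|\bm x-\bm z_\star|\le R_0h$. For this I would apply non-stationary phase to the oscillatory representation \eqref{eq:return-at-2T-frozen} followed by \eqref{eq:negative-short-time-focus}, obtaining remainders bounded by $C_Nh\langle\bm y\rangle^{-N}$ uniformly in $\bm y$. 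Away from an $O(1)$ neighborhood of $\bm z_\star$, microlocal propagation gives an $O(h^\infty)$ relative bound. With these bounds the rescaled set is contained in $B_{R_0}$, hence $\mathcal R_h^{\rm foc}\le\pi R_0^2h^2$.
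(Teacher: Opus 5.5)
Your proposal is correct. For the strict maximum and the $O(h^2)$ localization it follows the paper's argument: the triangle inequality forces a locally constant phase on $U_{\rm foc}$, and two non-parallel values of $\widehat{\bm\xi}$ give $\varsigma=0$ and then $\bm y=0$. The Hessian is shown negative definite via the covariance of $(\omega(\bm z_\star,\cdot),\xi_1,\xi_2)$. The maximizer is trapped near the origin by the strict maximum and compactness, and the critical-point equation then gives $|(\varsigma_h,\bm y_h)|\le C_Rh$.

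Two small remarks on this part. With the probability weight $a_{\rm foc}/M_{\rm foc}$, the Hessian of $G$ at the origin is $-2M_{\rm foc}^2\mathsf C$, not $-2M_{\rm foc}\mathsf C$; only the sign matters. Also, because you linearise $\nabla G$ rather than $\nabla G_h$ in step (c), you need only a $C^1$ bound on the remainder, whereas the paper uses the $C^2$ bound.

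Where you genuinely depart from the paper is the upper bound in \eqref{eq:focal-scale}, and there your route is the more complete one. The paper applies \eqref{eq:focal-expansion} on $B_{R_1}(0)$ and notes that the scaled field is below half the focal value on the sphere $|\bm y|=R_1$. It then concludes that the scaled area is bounded. But $\mathcal F_{h,1/2}$ is a superlevel set over all of $\mathbb R^2$, so this comparison only bounds $\mathcal F_{h,1/2}\cap B_{R_1h}(\bm z_\star)$; \eqref{eq:focal-expansion} says nothing outside the window. Your extra step closes this gap. It has two parts: decay in $\bm y$ on a fixed neighbourhood of $\bm z_\star$ by non-stationary phase, and $O(h^\infty)$ relative size farther away, because the selected return is microlocalised over $\bm x=\bm z_\star$ and the short-time propagator moves it only by $O(h)$.

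One adjustment is needed when you carry this out. \eqref{eq:return-at-2T-frozen} is established only for $|\bm y|\le R$, with $R$-dependent constants, so you cannot apply it directly for $|\bm y|$ up to $\delta/h$. You have two options. One is to redo the freezing using \eqref{eq:trick}, so that no factor of $|\bm y|$ survives in the remainder. The simpler one is to apply non-stationary phase to the unfrozen representation \eqref{eq:return-at-2T} composed with \eqref{eq:negative-short-time-focus}. Its amplitude has $\bm\xi$-derivatives bounded uniformly for $\bm x$ in a fixed compact set. This gives $|E_{z,h}^{\rm back,sel}(t_{{\rm focus},h},\bm z_\star+h\bm y)|\le C_Nh^{-3}\alpha_h\langle\bm y\rangle^{-N}$ for the whole field (not just the remainder) on $|h\bm y|\le\delta$, and hence an $h$-independent radius $R_0$ directly.

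In the far region $|\bm x-\bm z_\star|\ge\delta$, the negligible remainders must be converted into pointwise bounds by Sobolev embedding. The factor $h^{-1}$ from $\|\delta_{\bm z_\star}\|_{H_h^{-N_0}}$ and the embedding loss are then absorbed by $h^M$.
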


\begin{proof}
Equation \eqref{eq:focal-expansion} is obtained by using \eqref{eq:modulation-cancellation} in the returned symbol. Thus the estimates below apply to the direction-independent leading weight produced by \eqref{eq:modulation-formula}.

Put
\begin{equation*}
 M_{\rm foc}:=\mathscr F_{\rm foc}(0,0)=\int_{\mathbb R^2}a_{\rm foc}(\bm\xi)\dd\bm\xi>0.
\end{equation*}
The triangle inequality gives
\begin{equation*}
 |\mathscr F_{\rm foc}(\varsigma,\bm y)|\le M_{\rm foc}.
\end{equation*}
Suppose equality holds. Equality in the integral triangle inequality implies that $\exp(\ii(\bm y\cdot\bm\xi+\varsigma\omega(\bm z_\star,\bm\xi)))$ has one constant phase for almost every $\bm\xi$ on which $a_{\rm foc}>0$. By \eqref{eq:focal-aperture-lower-bound} and continuity, the phase is constant on the open set $U_{\rm foc}$. Differentiating there gives
\begin{equation}\label{eq:phase-gradient}
 \bm y+\frac{\varsigma}{\sqrt{\mu\epsilon_0\epsilon_\infty(\bm z_\star)}}
 \widehat{\bm\xi}=0.
\end{equation}
An open subset of $\mathbb R^2\setminus\{0\}$ contains two covectors with different directions. Applying \eqref{eq:phase-gradient} to these two directions gives $\varsigma=0$, and then $\bm y=0$. This proves \eqref{eq:spacetime-strict-maximum}.

We next obtain a quadratic estimate near the maximum. Let
\begin{equation*}
 \dd\mathfrak m_{\rm foc}(\bm\xi):=M_{\rm foc}^{-1}a_{\rm foc}(\bm\xi)\dd\bm\xi.
\end{equation*}
This is a probability measure with compact support. Let $\mathsf C$ be the covariance matrix of the three-dimensional vector $\bm v_{\rm foc}(\bm\xi):=(\omega(\bm z_\star,\bm\xi),\xi_1,\xi_2)^\top$
with respect to $\mathfrak m_{\rm foc}$, i.e.
$$
    \mathsf C:= \int_{\mathbb{R}^2} (\bm v_{\rm foc}(\bm\xi)-\overline{\bm v}_{\rm foc})(\bm v_{\rm foc}(\bm\xi)-\overline{\bm v}_{\rm foc})^\top\dd\mathfrak m_{\rm foc}(\bm{\xi}),
$$
where $\overline{\bm v}_{\rm foc}=\int_{\mathbb{R}^2}\bm v_{\rm foc}(\bm\xi)\dd\mathfrak m_{\rm foc}(\bm{\xi})$.

The matrix $\mathsf C$ is positive definite. Indeed, if $(\varsigma,\bm y)^{\top}\mathsf C(\varsigma,\bm y)=0$, then $\varsigma\omega(\bm z_\star,\bm\xi)+\bm y\cdot\bm\xi$ is constant
$\mathfrak m_{\rm foc}$-almost everywhere. By \eqref{eq:focal-aperture-lower-bound}, $a_{\rm foc}$ is strictly positive on $U_{\rm foc}$, hence $\varsigma\omega(\bm z_\star,\bm\xi)+\bm y\cdot\bm\xi$ must be constant on $U_{\rm foc}$, and the argument in \eqref{eq:phase-gradient} gives $\varsigma=0$ and $\bm y=0$. 

For $(\varsigma,\bm y)\in\mathbb R^3$, by using Taylor expansion we have
\begin{equation*}
 \begin{aligned}
 \frac{|\mathscr F_{\rm foc}(\varsigma,\bm y)|^2}{M_{\rm foc}^2}
 &=\iint\cos\left(\bm y\cdot(\bm\xi-\bm\zeta)+\varsigma\bigl(\omega(\bm z_\star,\bm\xi)-\omega(\bm z_\star,\bm\zeta)\bigr)\right)\dd\mathfrak m_{\rm foc}(\bm\xi)\dd\mathfrak m_{\rm foc}(\bm\zeta)\\
 &=1-(\varsigma,\bm y)^{\top}\mathsf C(\varsigma,\bm y)+O\bigl((|\varsigma|+|\bm y|)^4\bigr).
 \end{aligned}
\end{equation*}
The remainder is uniform because the measure has compact support. Since $M_{\rm foc}+|\mathscr F_{\rm foc}(\varsigma,\bm y)|\le2M_{\rm foc}$, it follows that
\begin{align*}
    M_{\rm foc}-|\mathscr F_{\rm foc}| = \frac{M_{\rm foc}^2-|\mathscr F_{\rm foc}|^2}{M_{\rm foc}+|\mathscr F_{\rm foc}|} \geq \frac{M_{\rm foc}^2((\varsigma,\bm{y})^\top\mathsf C(\varsigma,\bm{y})+O((|\varsigma|+|\bm{y}|)^4))}{2M_{\rm foc}}.
\end{align*}
Since $\mathsf C$ is positive definite, there exist $r_0,c_0>0$ such that
\begin{equation*}
 M_{\rm foc}-|\mathscr F_{\rm foc}(\varsigma,\bm y)|\ge c_0M_{\rm foc}(|\varsigma|^2+|\bm y|^2)
\end{equation*}
whenever $|\varsigma|+|\bm y|\le r_0$. Since $(0,0)$ is a maximal point, the Hessian of $|\mathscr F_{\rm foc}|^2$ is negative definite throughout the neighborhood $|\varsigma|+|\bm y|\leq r_0$ given $r_0$ small enough.

For fixed $R>0$, we consider the possible maximizers inside the domain $\{(t,\bm{x}):|t-t_{{\rm focus},h}|\leq Rh, |\bm{x}-\bm{z}_\star|\leq Rh\}$. Choose $r_R>0$ smaller than both $r_0/2$ and $R/2$. By \eqref{eq:spacetime-strict-maximum} and compactness, outside the domain $\Omega_{r_R}:=\{(\varsigma,\bm y): |\varsigma|^2+|\bm y|^2 < r_R^2\}$, there exists $\delta_R>0$ such that
\begin{equation*}
 |\mathscr F_{\rm foc}(\varsigma,\bm y)|\le(1-\delta_R)M_{\rm foc}.
\end{equation*}
Equation \eqref{eq:focal-expansion} and \eqref{eq:focal-remainder-bound} show that 
$$
    E_{z,h}^{\rm back,sel}\bigl(t_{{\rm focus},h}+h\varsigma,\bm z_\star+h\bm y\bigr)-C_h\alpha_hc_h^{\rm av}\mathscr F_{\rm foc}(\varsigma,\bm{y}) = O(h),
$$
and
$$
    E_{z,h}^{\rm back,sel}\bigl(t_{{\rm focus},h},\bm z_\star\bigr)=C_h\alpha_hc_h^{\rm av}\mathscr F_{\rm foc}(0,0)+h\mathscr R_{h,R}(0,0).
$$
Using the uniform boundedness of remainder $\mathscr R_{h,R}$, we have that
$$
    E_{z,h}^{\rm back,sel}\bigl(t_{{\rm focus},h},\bm z_\star\bigr)\geq M_{\rm foc}-Ch.
$$
On the other hand, outside $\Omega_{r_R}$, 
\begin{align*}
|E_{z,h}^{\rm back,sel}\bigl(t_{{\rm focus},h}+h\varsigma,\bm z_\star+h\bm y\bigr)| = |C_h\alpha_hc_h^{\rm av}\mathscr F_{\rm foc}(\varsigma,\bm{y})+h\mathscr R_{h,R}| \leq (1-\delta_R)M_{\rm foc}+Ch.
\end{align*}
Letting $h<\delta_R M_{\rm foc}/(2C)$, we can ensure that when $(\varsigma,\bm{y})\notin\Omega_{r_R}$, 
$$
|E_{z,h}^{\rm back,sel}\bigl(t_{{\rm focus},h}+h\varsigma,\bm z_\star+h\bm y\bigr)|<|E_{z,h}^{\rm back,sel}\bigl(t_{{\rm focus},h},\bm z_\star\bigr)|,
$$
which means the maximizer must fall inside $\Omega_{r_R}$.

In $\Omega_{r_R}$, $|\mathscr F_{\rm foc}|\ge M_{\rm foc}/2$ after decreasing $r_0$ if necessary. Define
\begin{equation*}
 G_h(\varsigma,\bm y):=
 |\mathscr F_{\rm foc}(\varsigma,\bm y)+h\mathscr R_{h,R}(\varsigma,\bm y)|^2.
\end{equation*}
The Hessian of $G_0$ at the origin is $-2M_{\rm foc}^2\mathsf C$, which is negative definite. By the choice of $r_0$ and the $C^2$ estimate in \eqref{eq:focal-remainder-bound}, after decreasing $r_R$ if necessary, $-D^2G_h\ge cI_3$ throughout the ball of radius $r_R$ for all small $h$. Let $(\varsigma_h^\#,\bm y_h^\#)$ be the scaled coordinates of a maximizer. It is an interior point, so $\nabla G_h(\varsigma_h^\#,\bm y_h^\#)=0$. Taylor's formula gives
\begin{equation*}
 0=\nabla G_h(0,0)+\left(\int_0^1D^2G_h(s\varsigma_h^\#,s\bm y_h^\#)\dd s\right)(\varsigma_h^\#,\bm y_h^\#)^\top.
\end{equation*}
The first term is $O(h)$ since $\nabla G_0(0,0)=0$, and the matrix in parentheses has a uniformly bounded inverse. Therefore
\begin{equation*}
 |\varsigma_h^\#|+|\bm y_h^\#|\le C_Rh.
\end{equation*}
Since $t_h^\#=t_{{\rm focus},h}+h\varsigma_h^\#$ and $\bm x_h^\#=\bm z_\star+h\bm y_h^\#$, this proves \eqref{eq:focus-error}.

We now set $\varsigma=0$. Since $a_{\rm foc}$ is supported in a fixed compact set,
\begin{equation*}
 |\mathscr F_{\rm foc}(0,\bm y)-M_{\rm foc}|\le |\bm y|\int|\bm\xi|a_{\rm foc}(\bm\xi)\dd\bm\xi.
\end{equation*}
Thus there is $r_1>0$ such that
\begin{equation*}
 |\mathscr F_{\rm foc}(0,\bm y)|\ge\frac34M_{\rm foc}\qquad(|\bm y|\le r_1).
\end{equation*}
On the other hand, let $|\bm y|\ge1$ and choose $j\in\{1,2\}$ such that $|y_j|\ge|\bm y|/\sqrt2$. Since $a_{\rm foc}\in C_c^\infty$, integration by parts gives, for every $N$,
\begin{equation*}
 \mathscr F_{\rm foc}(0,\bm y)=(\ii y_j)^{-N}\int e^{\ii\bm y\cdot\bm\xi}\partial_{\xi_j}^N a_{\rm foc}(\bm\xi)\dd\bm\xi.
\end{equation*}
Hence there exists $C_N$ such that
\begin{equation*}
 |\mathscr F_{\rm foc}(0,\bm y)|\le C_N(1+|\bm y|)^{-N}.
\end{equation*}
Choose $R_1>r_1$ large enough, such that the right-hand side is at most $M_{\rm foc}/4$ for $|\bm y|\ge R_1$. Apply \eqref{eq:focal-expansion} on the fixed ball $B_{R_1}(0)$. For $|\bm y|\le r_1$, the scaled field has magnitude at least $3M_{\rm foc}/4-Ch$, while its value at the origin is at most $M_{\rm foc}+Ch$. Thus $\{|\bm y|\leq r_1\}\subset \mathcal F_{h,1/2}$. On $|\bm y|=R_1$, its magnitude is at most $M_{\rm foc}/4+Ch$, while the value at the origin is at least $M_{\rm foc}-Ch$, thus the boundary of $\mathcal F_{h,1/2}$ cannot cross $\partial B_{R_1}(0)$. Its area in the $\bm y$ variables is therefore bounded above and below by positive constants independent of $h$. The change of variables
$\bm x=\bm z_\star+h\bm y$ multiplies area by $h^2$, which proves \eqref{eq:focal-scale}.
\end{proof}

\section*{Acknowledgments} 
H. Liu has been partially supported by the Hong Kong RGC General Research Funds (projects 11311122, 12301420, and 11300821). W. Wu has been supported by the NSFC grant (12301539).

\textbf{Data availability.} No datasets were generated or analysed during the current study, because our work proceeds within a theoretical and mathematical approach.

\bibliographystyle{plain} \bibliography{time-modulated}

\begin{appendix}

\section{Semiclassical conventions}\label{sec:semi-convention}
Throughout the paper, we require $0<h\le h_0$ for a fixed constant $h_0\in\mathbb{R}_+$. We use
\begin{equation*}
    \mathcal H_{\rm s}:=L^2(\mathbb R^2;\mathbb C),
    \qquad
    \mathcal H_{\rm v}:=L^2(\mathbb R^2;\mathbb C^4).
\end{equation*}

A scalar or matrix family $a_h(\bm x,\bm\xi)$, or a vector family $\bm a_h(\bm x,\bm\xi)$, is called a symbol of order $\mathfrak s$ on an open set $\mathcal V\subset T^*\mathbb R^2$ if, for every compact $K\Subset\mathcal V$ and all multi-indices $\alpha,\beta$. The estimates below are written for $a_h$, and vector-valued estimates are understood componentwise.
\begin{equation}\label{eq:symbol-class-definition}
 \sup_{(\bm x,\bm\xi)\in K}
 \langle\bm\xi\rangle^{-\mathfrak s+|\beta|}
 \bigl|\partial_{\bm x}^{\alpha}\partial_{\bm\xi}^{\beta}
 a_h(\bm x,\bm\xi)\bigr|
 \le C_{K,\alpha,\beta},
 \qquad 0<h\le h_0.
\end{equation}
For vector and matrix symbols, $|\cdot|$ is any fixed Euclidean matrix norm. If all $a_h$ are supported in one fixed compact subset of $\mathcal V$, we call the family compactly supported. For such a family, we use the simpler finite bound
\begin{equation}\label{eq:symbol-seminorm-definition}
 \|a_h\|_{\mathfrak s,N}:=
 \max_{|\alpha|+|\beta|\le N}
 \sup_{(\bm x,\bm\xi)}
 \langle\bm\xi\rangle^{-\mathfrak s+|\beta|}
 \bigl|\partial_{\bm x}^{\alpha}\partial_{\bm\xi}^{\beta}
 a_h(\bm x,\bm\xi)\bigr|.
\end{equation}
If symbol family $a_h$ has the bounds in \eqref{eq:symbol-class-definition}, or equivalently the quantities in \eqref{eq:symbol-seminorm-definition} after a compact cutoff, uniform in $h$, we call $a_h$ \emph{uniformly bounded as an order-$\mathfrak s$ symbol}. 

We apply Kohn--Nirenberg quantization in this paper. For symbol $a$, the \emph{(left) quantization} is defined as
$$
    (\Op_h(a)u)(\bm{x}) = \frac{1}{(2\pi h)^2}\int_{\mathbb{R}^2}\int_{\mathbb{R}^2} e^{\ii(\bm{x}-\bm{y})\cdot\bm{\xi}/h}a(\bm{x},\bm{\xi})u(\bm{y})\dd\bm{y}\dd\bm{\xi}.
$$

We write $H_h^s(\mathbb R^2;\mathbb C^d)$ for the semiclassical Sobolev space with norm $\|\langle hD\rangle^su\|_{L^2}$. If $a_h$ is a matrix symbol of order $\mathfrak s$, then
\begin{equation*}
    \Op_h(a_h):H_h^s(\mathbb R^2;\mathbb C^{d_1})
    \longrightarrow H_h^{s-\mathfrak s}(\mathbb R^2;\mathbb C^{d_2})
\end{equation*}
is uniformly bounded. In this paper, an order-$\mathfrak s$ semiclassical pseudodifferential operator means a properly supported quantization of an order-$\mathfrak s$ symbol, up to a negligible smoothing operator.

More precisely, a remainder $R_h$ is called \emph{negligible} from $H_h^{-N}(\mathbb R^2;\mathbb C^{d_1})$ to $H_h^N(\mathbb R^2;\mathbb C^{d_2})$ if, for every $M$,
\begin{equation*}
 \|R_h\|_{H_h^{-N}(\mathbb R^2;\mathbb C^{d_1})
 \to H_h^N(\mathbb R^2;\mathbb C^{d_2})}
 \le C_{N,M}h^M.
\end{equation*}
Since $H_h^N(\mathbb{R}^2;\mathbb{C}^d)$ will be repeatedly used in the main text when introducing negligible remainders, we abbreviate $H_h^N(\mathbb{R}^2)$ as $H_h^N$, and $H_h^N(\mathbb R^2; \mathbb C^d)$ as $H_h^{N,d}$. 

Symbols initially defined only on $\mathcal U$ are multiplied by a fixed cutoff in a slightly larger compact subset of $\mathcal U$ and then extended globally. Different extensions give the same microlocal operator modulo a negligible remainder. By saying \emph{microlocally on $K\Subset\mathcal U$}, we insert fixed compactly supported semiclassical cutoffs on the input near $K$ and on the output near the corresponding flow-out.

We shall also use amplitudes $a_h(\bm x,\bm\xi)$ whose frequency support is contained in one fixed compact set but whose spatial bounds are only local. We call such a family a \emph{controlled amplitude} if, for every compact $K_{\bm x}\subset\mathbb R^2$ and every $N$,
\begin{equation*}
 \max_{|\alpha|+|\beta|\le N}\sup_{\bm x\in K_{\bm x},\bm\xi}\bigl|\partial_{\bm x}^{\alpha}\partial_{\bm\xi}^{\beta}a_h(\bm x,\bm\xi)\bigr|\le C_{K_{\bm x},N}.
\end{equation*}
If an additional parameter $\bm y$ ranges in a fixed compact set, we call $a_h$ \emph{uniformly controlled in $\bm y$} if the same estimate holds after any fixed number of $\bm y$-derivatives.

Finally, by \emph{an order-zero propagating operator along $\bm\Phi^t$}, we mean a semiclassical operator $A_h(t)$ which, after a finite partition in phase space, can be written in the form 
$$
    (A_h(t)u)(\bm{x}) = \frac{1}{(2\pi h)^d}\int_{\mathbb{R}^d}\int_{\mathbb{R}^d} e^{\ii(S(t,\bm{x},\bm{\eta})-\bm{y}\cdot\bm{\eta})/h}a(t,\bm{x},\bm{\eta};h)u(\bm{y})\dd\bm{y}\dd\bm{\eta}.
$$
Here $S(t,\bm{x},\bm{\eta})$ generates $\bm{\Phi}^t$, i.e. 
$$
\bm{y}=\partial_{\bm\eta}S(t,\bm{x},\bm{\eta}), \qquad \bm{\xi}=\partial_{\bm{x}}S(t,\bm{x},\bm{\eta})
$$ 
if and only if 
$$
(\bm{x},\bm{\xi})=\bm{\Phi}^t(\bm{y},\bm{\eta}).
$$ 
The amplitude $a(t,\bm{x},\bm{\eta};h)$ is supported in a fixed compact set, independent of $h$ and $t$, and for every multi-indices $\alpha, \beta$ and every integer $j>0$,
$$
    |\partial_t^j\partial_{\bm{x}}^\alpha\partial_{\bm{\eta}}^\beta a(t,\bm{x},\bm{\eta};h)|\leq C_{j\alpha\beta}, 
$$
where the constants $C_{j\alpha\beta}$ are independent of $h$ and $t$. If $a$ is matrix-valued, the absolute value is replaced by any fixed matrix norm.

\end{appendix}

\end{document}